\theoremstyle{plain} 
\newtheorem{theorem}{Theorem}
\newtheorem{proposition}[theorem]{Proposition}
\newtheorem{corollary}[theorem]{Corollary}
\newtheorem{lemma}[theorem]{Lemma}
\theoremstyle{definition}
\newtheorem{remark}[theorem]{Remark}
\newtheorem{definition}[theorem]{Definition}
\newtheorem{example}[theorem]{Example}
\def\keywords#1{\small{\textbf{Keywords:} #1}}
\newenvironment{acknowledgements}{\subsection*{Acknowledgements}}{}
\newcommand{\Cal}[1]{\mathcal{#1}}
\newcommand{\C}{\mathbb{C}}
\newcommand{\CP}{\mathbb{CP}}
\newcommand{\R}{\mathbb{R}}
\newcommand{\Z}{\mathbb{Z}}
\newcommand{\N}{\mathbb{N}}
\newcommand{\res}{\mathrm{Res}}
\newcommand{\widebar}[1]{\mkern 2mu\overline #1}
\newcommand{\Spec}{\mathrm{Spec\,}}
\newcommand{\sminus}{\smallsetminus}
\newcommand{\@fatnormbar}{\vrule\@width 2\p@}
\newcommand{\fatnorm}[1]{\left.|\kern-2\nulldelimiterspace \@fatnormbar \kern.15em  #1  \kern.15em \@fatnormbar  \right.\kern-\nulldelimiterspace}
\newcommand{\boxnorm}[1]{\left.\kern-2\nulldelimiterspace \talloblong #1 \talloblong \right.\kern-2.5\nulldelimiterspace}
\begin{document}

\author{Martin Klime\v s}
\title{Confluence of singularities of non-linear differential equations via Borel--Laplace transformations}

\maketitle

\begin{abstract}
Borel summable divergent series usually appear when studying solutions of analytic ODE near a multiple singular point. 
Their sum, uniquely defined in certain sectors of the complex plane, is obtained via the Borel--Laplace transformation.
This article shows how to generalize the Borel--Laplace transformation in order to investigate bounded solutions of parameter dependent 
non-linear differential systems with two simple (regular) singular points unfolding a double (irregular) singularity.
We construct parametric solutions on domains attached to both singularities, that converge locally uniformly to the sectoral Borel sums.
Our approach provides a unified treatment for all values of the complex parameter.

\medskip\noindent
\keywords{Ordinary differential equations \and irregular singularity \and unfolding  \and confluence 
\and center manifold of a saddle--node singularity \and Borel summation }
\end{abstract}

\section{Introduction}

When studying formal solutions of complex analytic ODE near a multiple singular point, it is the general rule to find divergent series. 
However, one can always construct true analytic solutions, defined on certain sectors attached to the singularity, 
which are asymptotic to the formal solution, and which are in some sense unique.
In general, the solutions on different sectors do not coincide, and if extended to larger sectors, they may drastically change their asymptotic behavior due to the presence of hidden exponentially small terms, known as the (non-linear) Stokes phenomenon.
In case where the singularity is a generic double point such sectoral solutions are obtained from the formal one using  Borel-Laplace summation procedure.
It is now understood, that the divergence of the formal asymptotic series is caused by singularities of its Borel transform, which also encode information on the geometry of the singularity.
Another way how to understand the Stokes phenomena is by considering generic parameter depending deformations which split the multiple (irregular) singular point into several simple (regular) singularities:
it turns out that the local analytic solutions at each singular point of the deformed equation in general do not match, 
thus explaining why solutions with nice asymptotic behavior at the limit when the singular points coalesce only exist in sectors.

When investigating families of analytic systems of ODEs depending on a complex parameter, that unfold a multiple singularity,
one is faced with the problem that the Borel method of summation of formal series does not allow to deal with several singularities and their confluence.
One of our goals here is to show how one can generalize (unfold) the Borel and Laplace operators in case of a generic singularity of multiplicity 2.

In this article we are investigating parametric families of first order non-linear differential systems unfolding 
a double singularity
\begin{equation}\label{eq:BL-system}
(x^2\!-\epsilon)\frac{dy}{dx}=My+f(x,\epsilon,y),\qquad (x,\epsilon,y)\in\C\times\C\times\C^m,
\end{equation}
with $M$ an invertible $m\!\times\! m$-matrix, $f(x,\epsilon,y)$ an analytic germ, $\,D_yf(0,0,0)=0$, and where $\epsilon\in\C$ is a small parameter. 
We study bounded parametric solutions of \eqref{eq:BL-system} near the singular points $x=\pm\sqrt\epsilon$ and their limits when $\epsilon\to 0$.  
Such solutions correspond to \emph{ramified} center manifolds of an unfolded codimension 1 saddle-node singularity in a family of complex vector fields
$$\dot x=x^2\!-\epsilon,\qquad \dot y=My+f(x,\epsilon,y).$$ 

For $\epsilon=0$, the divergence of the formal power series solution of \eqref{eq:BL-system} means that an analytic center manifold does not exist. Instead there are ``sectoral center manifolds'' corresponding to the Borel sums of the divergent series.
 
For $\epsilon\neq 0$, it is well known that for non-resonant values of the parameter 
there exists a local analytic solution on a neighborhood of each singularity $x=\pm\sqrt\epsilon$.
Previous studies of the confluence phenomenon \cite{SS}, \cite{G} have focused at the limit behavior of these local solutions when $\epsilon\to 0$. 
Because the resonant values of $\epsilon$ accumulate at $0$ in a finite number of directions, these directions of resonance in the parameter space could not be covered in those studies, except if the spectrum of $M$ was of Poincaré type.
Here we make no assumption other that $M$ is invertible. 

We will construct a new kind of parametric solutions of systems \eqref{eq:BL-system} which are defined and bounded on certain ramified domains attached to both singularities
$x=\pm\sqrt\epsilon$ (at which they possess a limit) in a spiraling manner. 
They depend analytically on  $\sqrt\epsilon$ from a sector of opening $>\pi$, thus covering a full neighborhood of the origin in the parameter $\epsilon$ space (including those parameters values for which the unfolded system is resonant), and they converge uniformly when $\epsilon\to 0$ to a pair of the classical sectoral solutions: 
Borel sums of the formal power series solution of the limit system, defined on two sectors covering a full neighborhood of the double singularity at the origin. 
In fact, each such pair of the sectoral Borel sums for $\epsilon=0$ unfolds to a unique above mentioned parametric solution.

We provide three different and complementary interpretations of these unfolded sectoral solutions: 
\begin{itemize}\parskip=3pt
\item[i)] Using unfolded Borel and Laplace transformations:
This is the principal approach of this article, with an advantage that it provides a unified treatment for all values of the parameter $\epsilon$ and explains the form of natural domains on which the solutions exist and are bounded.
Most importantly, it gives an insight to intrinsic properties of the singularity and to the source of the divergence similar to that provided by the classical Borel--Laplace approach.
\item[ii)] Using the Hadamard-Perron theorem for $\epsilon\neq 0$. 
\item[iii)] Interpreting them as certain Borel sums of the unique formal power series in $(x,\epsilon)$ 
solving \eqref{eq:BL-system}, which in turn is their asymptotic expansion. 
An important consequence of this correspondence is that the formal and the unfolded sectoral solutions satisfy the same $\{\partial_x,\partial_\epsilon\}$-partial differential relations.
\end{itemize}

These solutions were previously constructed by other methods in the special cases of dimension $m=1$ and a general multiplicity of the singular point \cite{RT},
and in the case of Riccati systems corresponding to normalizing transformations for families of linear differential systems unfolding a non-resonant irregular singularity \cite{LR}, \cite{HLR}, which motivated our present study. 
All our results translate directly to this situation, $y$ playing role of such normalizing transformation (Section~\ref{subsec:BL-1.3} below).

\section{Statement of results}

Notation:
Throughout the text $]a,b[$ (resp. $[a,b]$) denotes the open (resp. closed) oriented segment between two points $a,b\in\C$; 
$\,e^{i\alpha}\R^+=[0,+\infty e^{i\alpha}[$ is an oriented ray,
and $c+e^{i\alpha}\R=]c-\infty e^{i\alpha},c+\infty e^{i\alpha}[$, with $\alpha\in\R$, $c\in\C$, is an oriented line.

\subsection{Borel--Laplace transformations and their unfolding}

The \emph{Borel method of summation of (1-summable) divergent series} is used to construct their sectoral Borel sums: 
unique analytic functions that are \emph{asymptotic} to the series in certain sectors of opening $>\pi$ at the singular point and 
satisfy the same differential relations.

Let $\,\hat y(x)=\sum_{k=1}^{+\infty} y_k\, x^k\,$ be a formal power series.
Using the Euler formula for the $\Gamma$-function: $\Gamma(k)=\int_0^{+\infty} z^{k-1} e^{-z}\,dz$, equal to $(k-1)!$ if $k\in\N_{>0}$,
one can write $x^k=\int_0^{+\infty e^{i\alpha}} \!\!\!\frac{\xi^{k-1}}{(k-1)!}\,e^{-\frac{\xi}{x}}\,d\xi$,
for $x$ in the half-plane $\Re e^{i\alpha}x>0$.   
Hence 
$$\hat y(x)=\sum_{k=1}^{+\infty} y_k\, x^k=\sum_{k=1}^{+\infty}\int_0^{+\infty e^{i\alpha}}\!\!\! \frac{y_k}{(k-1)!}\,\xi^{k-1}\cdot e^{-\frac{\xi}{x}}\,d\xi.$$
The \emph{formal Borel transform} of $\hat y$ is the series
\begin{equation}\label{eq:BL-formalborel}
 \widehat B[\hat y](\xi)=\sum_{k=1}^{+\infty}\frac{y_k}{(k-1)!}\,\xi^{k-1}.
\end{equation}
If the coefficients of $\hat y(x)$ have at most factorial growth ($\,|y_k|\leq c^k k!\,$ for some $c>0$), 
then the series $\widehat B[\hat y](\xi)$ is convergent on a neighborhood of 0 with a sum $\phi(\xi)$.
If moreover $\phi$ has an analytic  extension  to a half-line $e^{i\alpha}\R^+$ and has at most exponential growth there
($|\phi(x)|\leq K\,e^{\Lambda|\xi|}$, $\xi\in e^{i\alpha}\R^+$, for some $K,\Lambda > 0$),
then its \emph{Laplace transform in the direction $\alpha$}
\begin{equation}\label{eq:BL-laplace}
 L_\alpha[\phi](x)=\int_0^{+\infty e^{i\alpha}} \!\!\!\phi(\xi)\cdot e^{-\frac{\xi}{x}}\,d\xi
\end{equation}
is convergent for $x$ in a small open disc of diameter $\frac{1}{\Lambda}$ centered at $\frac{e^{i\alpha}}{2\Lambda}$ and extends to 0 (which lies on the boundary of the disc), 
defining there the \emph{Borel sum} of $\hat y(x)$ in direction $\alpha$.
A series $\hat y[x]$ is \emph{Borel summable} (or \emph{1-summable}) 
if its Borel sums $L_\alpha[\widehat B[\hat y]](x)$ exist in all but finitely many directions $0\leq\alpha<2\pi$.
When varying continuously the direction in which the series is summable, the corresponding Borel sums are analytic extensions one of the other, yielding a function defined on a sector of opening $>\pi$.

Let us remark that $\hat y[x]$ is convergent if and only if it is Borel summable in all directions.
This means that the Borel sums of divergent series can only exist on sectors. This is also known as the Stokes phenomenon.

The Borel sums of  $\hat y(x)$ are asymptotic of Gevrey order 1 to the formal series $\hat y(x)$  at the origin, and 
most importantly, they satisfy the same analytic differential equations as $\hat y(x)$.
More detailed information on the Borel summability can be found, for example, in \cite{MR2}, \cite{M} or \cite{Bal}.

\smallskip
A typical source of Borel summable power series are formal solutions of generic ODEs at an irregular singular point of multiplicity 2. 

\begin{example}\label{example:BL-1}
 A non-homogeneous linear analytic ODE with a double singularity at the origin
  \begin{equation}\label{eq:BL-cm0}
  x^{2}\frac{dy}{dx}=y+ f(x), \quad (x,y)\in\C\times\C,
  \end{equation}
where $\,f(x)\in x\,\C\{x\}\,$ is a convergent power series, possesses a unique formal solution $\hat y(x)$. 
Generically, this series is divergent (for instance if $f(x)=-x$ then $\hat y(x)=\sum_{n=1}^{+\infty}(n-1)!x^{n}$ is the Euler series). 
The formal Borel transform of the equation \eqref{eq:BL-cm0} is
$$\xi\cdot\widehat B[\hat y](\xi)=\widehat B[\hat y](\xi)+\widehat B[f](\xi),$$
hence the reason for the divergence of $\hat y(x)$ is materialized by the singularity of  
$\widehat B[\hat y](\xi)=\frac{\widehat B[f](\xi)}{\xi-1}$ at $\xi=1$.
The Borel sum $y(x)=L_\alpha[\widehat B[\hat y]](x)$ of $\hat y(x)$, $\alpha\in\,]0,2\pi[$, is a solution to \eqref{eq:BL-cm0}, well defined in a ramified sector $\arg x\in\,]-\frac{\pi}{2}+\eta,\frac{5\pi}{2}-\eta[$ for any $\eta>0$.
The set $(x,y(x))$ is a \emph{center manifold of a saddle-node} singularity of the vector field
$$\dot x = x^{2},\quad  \dot y = y+ f(x).$$
This example shows that in general an analytic center manifold does not exist, but instead there are ``sectoral center manifolds''.
\end{example}

\smallskip

The inverse to the Laplace transformation in direction $\alpha$ is the analytic Borel transformation:
If  $y(x)$ is a germ of function analytic on a closed sector of opening $\geq\pi$ bisected by $e^{i\alpha}\R^+$ that vanishes at 0 as $O(x^\lambda)$ uniformly in the sector for some $\lambda>0$, then its \emph{analytic Borel transform in direction $\alpha$} is defined as the ``Cauchy principal value'' ($V.P.$) of the integral
\begin{equation}\label{eq:BL-1}
B_\alpha[y](\xi) = \tfrac{1}{2\pi i} \, V.P.\!\int_{\gamma} \!y(x)\,e^{\frac{\xi}{x}}\,\tfrac{dx}{x^2}, \qquad\text{for }\ \xi\in e^{i\alpha}\R, 
\end{equation}
over a circle $\gamma=\{\Re\big(\tfrac{e^{i\alpha}}{x}\big)=C\}$, $C>0$, inside the sector.
The plane of $\xi$ is also called the \emph{Borel plane}.

The formal Borel transform \eqref{eq:BL-formalborel} of an analytic germ $y$ vanishing at 0 is related to the analytic one by
\begin{equation}\label{eq:BL-formalavsnalyticborel}
B_\alpha[y](\xi)=\chi_\alpha^+(\xi)\cdot\widehat  B[y](\xi),\qquad\text{for }\ \xi\in e^{i\alpha}\R, 
\end{equation}
where
\begin{equation*}
\chi_\alpha^+(\xi)=\left\{ \begin{array}{ll}     
     1, & \text{if $\ \xi\in\,]0,+\infty e^{i\alpha}[$,}\\[3pt]
     0, & \text{if $\ \xi\in\,]\!-\!\infty e^{i\alpha},0[$.}
   \end{array}\right. 
\end{equation*}

\smallskip
The idea of unfolding the Borel--Laplace operators in order to generalize the methods of Borel summability and resurgent analysis to systems with several confluent singularities
was initially brought up by Sternin and Shatalov in \cite{SS}.
The key lies in appropriate unfolding of the ``kernels'' $\,e^{\frac{\xi}{x}}\,\tfrac{dx}{x^2}$ and $\,e^{-\frac{\xi}{x}}\,d\xi$ 
of the transformations \eqref{eq:BL-laplace} and \eqref{eq:BL-1}, and in right determination of the paths of integration.
The Borel transformation is designed so that it converts the derivation $x^2\frac{d}{dx}$ to multiplication by $\xi$, and we will want to preserve this property.

The complex vector field  $\, x^2\tfrac{\partial}{\partial x} \,$
with a double singularity at the origin is naturally (and universally) unfolded as
\begin{equation}\label{eq:BL-vectfield}
  (x^2\!-\epsilon)\tfrac{\partial}{\partial x},\quad \epsilon\in\C.
\end{equation}
We will associate to it the \emph{unfolded Borel} and \emph{Laplace transformations}
\begin{equation}\label{eq:BL-unfoldedBL}
\begin{aligned}
\Cal B_\alpha^+[y](\xi,\sqrt\epsilon) &= \tfrac{1}{2\pi i} \, V.P.\!\int_{\Re e^{i\alpha}t(x,\epsilon)=C} \!y(x)\,e^{t(x,\epsilon)\xi}\,dt(x),\quad 
0<C<\Re\big( e^{i\alpha}\tfrac{\pi i}{\sqrt\epsilon} \big), \\
\Cal L_\alpha[\phi](x,\sqrt\epsilon) &=\int_{-\infty e^{i\alpha}}^{+\infty e^{i\alpha}} \phi(\xi)\,e^{-t(x,\epsilon)\xi}\,d\xi,
\end{aligned}
\end{equation}
where 
\begin{equation}\label{eq:BL-t}
\hskip6pt 
t(x,\epsilon)=-\int\frac{dx}{x^2\!-\!\epsilon}:=
\left\{  \begin{array}{ll}
     -\frac{1}{2\sqrt\epsilon}\log\frac{x-\sqrt\epsilon}{x+\sqrt\epsilon}, &  \text{if $\ \epsilon\neq 0$,}\\[6pt]
     \frac{1}{x}, & \text{if $\ \epsilon=0$,}\\
   \end{array}\right.
\end{equation} 
is the negative complex time of the vector field \eqref{eq:BL-vectfield}.
Let us remark that the (unilateral) Laplace transformation $L_\alpha[\phi]\,$  \eqref{eq:BL-laplace} is equal to the (bilateral) Laplace transformation 
$\Cal L_\alpha[\phi]$ with $\epsilon\!=\!0$ and $t(x,0)=\frac{1}{x}$, 
if one extends the integrand by 0
for $\xi\in\,]\!-\!\infty e^{i\alpha},0[\,$:
$$L_\alpha[\phi](x)=\Cal L_\alpha[\chi_\alpha^+\phi](x,0).$$

In Sections~\ref{sec:BL-2} and \ref{sec:BL-3} we will establish some general properties of these transformations based on the classical theory of Fourier and Laplace integrals, and in Section~\ref{sec:BL-4} we will apply them to study solutions of \eqref{eq:BL-system} in the vicinity of the singular points.

\subsection{Center manifold of an unfolded codimension 1 saddle--node type singularity}\label{subsec:BL-1.2}

An isolated singular point of a holomorphic vector field in $\C^{m+1}$ is of \emph{saddle--node} type if its linearization matrix has exactly one zero eigenvalue, and 
it is of \emph{codimension 1} if the multiplicity of the singular point is 2. 

We consider an analytic family of vector fields in $\C^{m+1}$ unfolding such saddle node singularity  in form
\begin{equation}\label{eq:BL-vectsyst}
\dot x=x^2\!-\epsilon,\quad \dot y=My+ f(x,\epsilon,y), \qquad (x,\epsilon,y)\in\C\times\C\times\C^m,
\end{equation}
with $\epsilon\in\C$  a small parameter, $M$ \emph{invertible}, and 
$f$ a germ of analytic vector function at the origin of $\C\times\C\times\C^m$ with
\begin{equation}\label{eq:BL-f}
D_{y}f(0,0,0)=0,\quad\text{and}\quad f(x,\epsilon,0)=O(x^2\!-\epsilon). 
\footnote{If instead $f(x,\epsilon,0)$ was only $O(|x|+|\epsilon|)$, and $u_{\pm\sqrt\epsilon}\in\C^m$ were the unique solutions of 
$0=M u_{\pm\sqrt\epsilon}+f(\pm\sqrt\epsilon,u_{\pm\sqrt\epsilon},\epsilon)$, with $\,u_{\pm 0}=0$, then the change of variable $y\,\mapsto\, y-\tfrac{1}{2\sqrt\epsilon}\Big(u_{+\sqrt\epsilon}(x\!+\!\sqrt\epsilon)-u_{-\sqrt\epsilon}(x\!-\!\sqrt\epsilon)\Big),$ analytic in $(x,\epsilon)$, would bring the system \eqref{eq:BL-vectsyst} to a one with $f(x,\epsilon,0)=O(x^2\!-\epsilon)$.}
\end{equation} 
For $\epsilon=0$ the vector field \eqref{eq:BL-vectsyst} possesses a ramified 1-dimensional ``center manifold'' consisting of several sectoral pieces tangent to the $x$-axis.
Here we study its parametric unfolding in the family: It is 
given as a graph of a function $y=y(x,\sqrt\epsilon)$, ramified at $x=\pm\sqrt\epsilon$,
satisfying the singular non-linear system of $m$ ordinary differential equations
\begin{equation}\label{eq:BL-cm}
(x^2\!-\epsilon)\frac{dy}{dx}=My+ f(x,\epsilon,y), \qquad (x,\epsilon,y)\in \C\times\C\times\C^m.
\end{equation}


\begin{remark}
If $f(x,\epsilon,0)=0$, then \eqref{eq:BL-cm} has a unique analytic solution given by $y=0$. Being trivial, this case bears no interest in this article. Reciprocally, if \eqref{eq:BL-cm}  has an analytic solution $y=\phi(x,\epsilon)$, then the change of variable $y\mapsto y-\phi(x,\epsilon)$ brings \eqref{eq:BL-vectsyst} to a form with $f(x,\epsilon,0)=0$.
\end{remark}

\begin{remark}
In dimension $m=1$, families of vector fields unfolding a saddle-node of codimension $k$  were thoroughly studied in \cite{RT}.
\end{remark} 

\begin{remark}
A general analytic family of vector fields in $\C^{m+1}$ unfolding a saddle-node singularity of codimension 1 to two simple singularities is locally orbitally analytically equivalent to
\begin{equation}\label{eq:BL-vectsyst1}
\dot x=(x^2\!-\epsilon) +G(x,\epsilon,y),\quad \dot y=My+ F(x,\epsilon,y), 
\end{equation}
with $G(x,\epsilon, y)=o(|y|)$, $F(x,\epsilon,y)=O(x^2\!-\epsilon)+o(|y|)$.
The singular transformation (blow-up) $y=(x^2\!-\epsilon)u$ brings \eqref{eq:BL-vectsyst1} to 
\begin{equation*}
\dot x=(x^2\!-\epsilon)\big(1+\tfrac{G(x,\epsilon,(x^2\!-\epsilon)u)}{(x^2\!-\epsilon)}\big) ,\quad 
\dot u=Mu+\tfrac{F(x,\epsilon,(x^2\!-\epsilon)u)}{(x^2\!-\epsilon)}-2x\big(1+\tfrac{G(x,\epsilon,(x^2\!-\epsilon)u)}{(x^2\!-\epsilon)}\big)u,
\end{equation*}
which by the assumption is analytic near $0\in\C\times\C\times\C^m$. A transformation sending its two singularities to the points $(x,u)=(\pm\sqrt\epsilon, 0)$ and a division by a non-vanishing germ reduces it to the from \eqref{eq:BL-vectsyst}. 
\footnote{For $m=1$, it's been shown in \cite[Proposition 3.1]{RT}, cf. also \cite[Lemma 1]{G}, 
that the the family \eqref{eq:BL-vectsyst1} is in fact locally orbitally analytically equivalent to a family \eqref{eq:BL-vectsyst}.} 
\end{remark}

\subsubsection{Formal solution.}

\begin{proposition}[Formal solution]\label{prop:BL-formalsolution}
The system \eqref{eq:BL-cm} possesses a unique solution in terms of a formal power series in $(x,\epsilon)$:
\begin{equation}\label{eq:BL-formalsolution}
 \hat y(x,\epsilon)=\sum_{k,j=0}^{+\infty}y_{kj}x^k\epsilon^j,\qquad y_{kj}\in\C^m.
\end{equation}
This series is divisible by $(x^2\!-\epsilon)$, and its coefficients satisfy 
$\|y_{kj}\|\leq L^{k+2j}(k+2j-1)!$ for some $L>0$.
\end{proposition}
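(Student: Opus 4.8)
The plan is to substitute the ansatz $\hat y(x,\epsilon)=(x^2-\epsilon)\sum_{k,j\geq 0}y_{kj}x^k\epsilon^j$ into the system \eqref{eq:BL-cm} and to show that the coefficients $y_{kj}\in\C^m$ are uniquely and recursively determined. First I would observe why the prefactor $(x^2-\epsilon)$ is forced: writing $\hat y=\sum_{k,j}c_{kj}x^k\epsilon^j$, the left-hand side $(x^2-\epsilon)\frac{d\hat y}{dx}$ vanishes to order $\geq 1$ in $x$ and has no constant term, while the structural assumption $f(x,y,\epsilon)=O(\|y\|^2)+x\,O(\|y\|)+(x^2-\epsilon)\,O(1)$ means that, modulo the invertible linear term $M(\epsilon)y$, the right-hand side is a combination of terms each divisible by one of $y$, $x$, or $(x^2-\epsilon)$. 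Evaluating at $x=0,\epsilon=0$ forces $c_{00}=M(0)^{-1}f(0,0,0)$, but $f(0,0,0)=0$ by the hypothesis (the $(x^2-\epsilon)O(1)$ part contributes $-\epsilon\cdot O(1)$ which vanishes at the origin, and the other parts vanish at $y=0$, $x=0$); more precisely one checks $f(0,0,\epsilon)=-\epsilon\,g(\epsilon)$ and the $x=0$ restriction of \eqref{eq:BL-cm} reads $-\epsilon\,\hat y'(0,\epsilon)=M(\epsilon)\hat y(0,\epsilon)-\epsilon g(\epsilon)+\ldots$, which by induction on the $\epsilon$-order forces $\hat y(0,\epsilon)$ to be divisible by $\epsilon$. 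This is exactly the statement that $\hat y$ is divisible by $(x^2-\epsilon)$ in the ring $\C[[x,\epsilon]]$: it vanishes on the variety $x^2=\epsilon$, equivalently $\hat y(x,x^2)=0$ as a formal series in $x$.

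Having justified the factorization, I would set $\hat y=(x^2-\epsilon)\hat u$ with $\hat u=\sum y_{kj}x^k\epsilon^j$ and rewrite \eqref{eq:BL-cm} as an equation for $\hat u$. Using $\frac{d\hat y}{dx}=2x\hat u+(x^2-\epsilon)\frac{d\hat u}{dx}$, the left side becomes $(x^2-\epsilon)\big(2x\hat u+(x^2-\epsilon)\hat u'\big)$, and after dividing the whole equation by $(x^2-\epsilon)$ we obtain
\begin{equation*}
(x^2-\epsilon)\big(2x\hat u+(x^2-\epsilon)\hat u'\big)=M(\epsilon)\hat u+\tilde f(x,\hat u,\epsilon),
\end{equation*}
where $\tilde f(x,u,\epsilon):=\frac{1}{x^2-\epsilon}f\big(x,(x^2-\epsilon)u,\epsilon\big)$ is again a germ of analytic function: the three pieces of $f$ become, respectively, $(x^2-\epsilon)\,O(\|u\|^2)$, $x\,O(\|u\|)$, and $O(1)$, so $\tilde f(0,0,0)$ need not vanish but $\tilde f$ is regular. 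Now the key point is that the linear part is $-M(\epsilon)\hat u$ on the right with $M(0)$ invertible, so comparing coefficients of $x^k\epsilon^j$ gives $M(0)\,y_{kj}=(\text{polynomial in }y_{k'j'}\text{ with }k'+j'<k+j\text{, and in the Taylor coefficients of }\tilde f)$; the terms on the left-hand side and all nonlinear/regular contributions of $\tilde f$ only involve $y_{k'j'}$ of strictly lower total degree $k'+j'$, because the left side carries an explicit factor $(x^2-\epsilon)$ (raising degree) and $\tilde f=O(x)+O(\|u\|)+O(1)$ where the $O(\|u\|)$ and $O(\|u\|^2)$ parts are at least linear in $u$ hence also shift degrees appropriately, while the purely $x,\epsilon$-dependent $O(1)$ part contributes only known constants. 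Hence $y_{kj}=M(0)^{-1}\cdot(\text{known})$ is determined by induction on $k+j$, starting from $y_{00}=M(0)^{-1}\tilde f(0,0,0)$.

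I would organize the induction on $n=k+j$ and, for fixed $n$, collect all contributions to the coefficient of $x^k\epsilon^j$. The only mildly delicate bookkeeping is to verify that no term on the right-hand side other than the linear $M(\epsilon)\hat u$ produces a $y_{k'j'}$ with $k'+j'=n$: the derivative term $(x^2-\epsilon)^2\hat u'$ and the term $2x(x^2-\epsilon)\hat u$ obviously raise the total degree by at least $1$; in $\tilde f$, the part coming from $x\,O(\|y\|)/(x^2-\epsilon)=O(\|u\|)$ is linear in $u$ but carries no compensating lowering, so each $u$-factor there has total degree $\geq 0$ and is multiplied by a further factor of $x$ or higher, again strictly raising degree — one has to be slightly careful here and check that the lowest-order term of this $O(\|u\|)$ piece is genuinely $x\cdot(\text{const})\cdot u$, not $(\text{const})\cdot u$, which follows from the explicit $x\,O(\|y\|)$ in the hypothesis on $f$. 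The main obstacle, such as it is, is precisely this degree-tracking argument establishing the triangular (strictly lower-triangular plus invertible diagonal) structure of the linear system for the $y_{kj}$; once that is in place, existence and uniqueness of the formal solution are immediate from invertibility of $M(0)$. I would also note that convergence is not claimed — \eqref{eq:BL-formalsolution} is only a formal series, the whole point of the paper being that its Borel–Laplace sum, not the series itself, solves the equation analytically.
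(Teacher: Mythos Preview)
Your division by $(x^2-\epsilon)$ is carried out incorrectly, and the mistake is load-bearing. With $\hat y=(x^2-\epsilon)\hat u$ the original equation becomes
\[
(x^2-\epsilon)\big(2x\hat u+(x^2-\epsilon)\hat u'\big)=M(\epsilon)(x^2-\epsilon)\hat u+f\big(x,(x^2-\epsilon)\hat u,\epsilon\big),
\]
so after dividing by $(x^2-\epsilon)$ the left-hand side is $2x\hat u+(x^2-\epsilon)\hat u'$, \emph{not} $(x^2-\epsilon)\big(2x\hat u+(x^2-\epsilon)\hat u'\big)$ as in your displayed equation. Your subsequent degree count relies on the spurious extra factor: you invoke ``the derivative term $(x^2-\epsilon)^2\hat u'$'', but the correct term is $(x^2-\epsilon)\hat u'$, and its piece $-\epsilon\hat u'$ contributes $-(k+1)\,y_{k+1,\,j-1}$ to the coefficient of $x^k\epsilon^j$. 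Since $(k+1)+(j-1)=k+j$, this index has the \emph{same} total degree, so the system for the $y_{kj}$ is not strictly lower-triangular with respect to $k+j$ alone, and your induction on $n=k+j$ does not close.

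The fix is the one the paper uses: refine the ordering by declaring $(k',j')<(k,j)$ when $k'+j'<k+j$, or when $k'+j'=k+j$ and $j'<j$. The problematic coefficient $y_{k+1,\,j-1}$ has $j-1<j$, so it is already determined at the moment you solve for $y_{kj}$. All other contributions (from $2x\hat u$, from $x^2\hat u'$, from $M(\epsilon)-M(0)$, and from the $y$-dependent part of $f$) involve indices with $k'+j'\leq k+j-1$, and the recursion $M(0)y_{kj}=\text{(known)}$ then goes through. Your preliminary justification that any formal solution is divisible by $(x^2-\epsilon)$ is a nice addition the paper omits, but the core recursion needs this corrected ordering.
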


\begin{proof}
Write $\hat y(x,\epsilon)=(x^2\!-\epsilon)\sum_{k,j}u_{kj} x^k\epsilon^j$, $u_{kj}\in\C^m$, 
$$\,f(x,\epsilon,y)=\sum_{|l|\geq 0}\sum_{k,j}f_{l,k,j}x^k\epsilon^j y^l, \qquad  f_{l,kj}\in\C^m,$$  
where $y^l:=y_1^{l_1}\cdot\ldots\cdot y_m^{l_m}$ for each multi-index $l=(l_1,\ldots,l_m)\in\N^m$, and $|l|=l_1+\ldots+l_m$.
Substituting $\hat y(x,\epsilon)$ for $y$ in $f$, dividing the equation \eqref{eq:BL-cm} by $(x^2\!-\epsilon)$,
and comparing the coefficients of $x^k\epsilon^j$, one obtains a set of equations
$$Mu_{kj}=(k\!+\!1)(u_{k-1,j}-u_{k+1,j-1}) +P_{kj},$$
where $P_{kj}$ is a polynomial in 
$\{u_{k'j'}\mid k'\leq k,\ j'\leq j,\ k'\!+\!2j'< k\!+\!2j\}$.
By recursion with respect to the order $k+2j$ of the indices $(k,j)$ this uniquely determines all the coefficient vectors $u_{kj}$, and therefore also the coefficient vectors $y_{kj}=u_{k-2,j}-u_{k,j-1}$. 

Similarly, $y_{kj}$ (note that $k+2j\geq 2$) satisfy recursive equations
$$My_{kj}=(k\!+\!1)y_{k+1,j-1}-(k\!-\!1)y_{k-1,j} +Q_{kj},$$
where $Q_{kj}$ is a polynomial in 
$\{y_{k'j'}\mid k'\leq k,\ j'\leq j,\ k'+2j'< k+2j\}$, given by
$$Q_{kj}=-f_{0,k,j}\,-\sum_{\!|l|=1}\!\!\!\!\!\!\! \sum_{\substack{\kappa\leq k \\ \iota\leq j\\ \ 2\leq\kappa+2\iota<k+2j\!\!\!\! }}\!\!\!\!\!\!\! f_{l,k-\kappa,j-\iota}a_{l,\kappa,\iota}
\,-\sum_{\!|l|\geq 2}\!\!  \sum_{\substack{\ \kappa\leq k\\ \ \iota\leq j\\ 2|l|\leq\kappa+2\iota }}\!\! f_{l,k-\kappa,j-\iota} a_{l,\kappa,\iota},$$
where $a_{l,\kappa,\iota}$ are polynomials in 
$\{y_{k'j'}\mid k'\leq \kappa,\ j'\leq \iota,\ k'+2j'\leq \kappa+2\iota-2|l|\}$ determined by
$\left(\hat y(x,\epsilon)\right)^l=\sum_{\kappa+2\iota \geq 2|l|}a_{l,\kappa,\iota}x^\kappa\epsilon^\iota$.
We want to show by induction on the order $k+2j$ that 
$\|y_{kj}\|\leq L^{k+2j}(k\!+\!2j\!-\!1)!$, for some $L>0$ independent of $(k,j)$, for
$\|\cdot\|$ the maximum norm on $\C^m$.

Let us first show that if $\|y_{k'j'}\|\leq L^{k'+2j'}(k'\!+\!2j'\!-\!1)!$ for each
$k'\!\leq\!\kappa,\ j'\!\leq\!\iota,\ k'\!+\!2j'\leq\kappa\!+\!2\iota\!-\!2|l|$, then 
$$\|a_{l,\kappa,\iota}\|\leq L^{\kappa+2\iota}2^{|l|-1}\tfrac{(\kappa+2\iota-|l|)!}{|l|!},\quad 2|l|\leq\kappa+2\iota.$$
This is certainly true if $|l|=1$. If $|l|\geq 2$, then
$\|a_{l,\kappa,\iota}\|\leq\sum_{k',j'}\|a_{l',\kappa-k',\iota-j'}\|\cdot\|y_{k',j'}\|$,
where $l'$ is a smaller multi-index, $|l'|=|l|-1$, and the sum is taken over the set of indices
$\{(k',j')\mid k'\!\leq\!\kappa,\ j'\!\leq\!\iota,\ 2\leq k'\!+\!2j'\leq \kappa\!+\!2\iota\!-\!2|l|\!+\!2\}$
whose cardinality is equal $(\kappa\!+\!1)(\iota\!+\!1)-|l|(|l|\!+\!1)-2<(\kappa\!+\!2\iota\!-\!|l|\!-\!1)(\kappa\!+\!2\iota\!-\!|l|)$.
We can write $(\kappa\!+\!2\iota\!-\!k'\!-\!2j'\!-\!|l|\!+\!1)!\cdot(k'\!+\!2j'\!-\!1)!\leq \frac{2}{|l|}(\kappa\!+\!2\iota\!-\!|l|\!-\!2)!$ since the first factorial is at least $(|l|\!-\!1)!$. Hence the estimate.

Now if $\|f_{l,k,j}\|\leq L_1^{k+2j}L_2^{|l|}$ for some $L_1,L_2>0$, then
\begin{align*}
\|Q_{kj}\|\tfrac{ L^{-k-2j}}{(k+2j-1)!} &\leq \ 
\left(\tfrac{L_1}{L}\right)^{\!k+2j}
+\tfrac{mL_2}{k+2j-1}\sum_{\kappa,\iota} \tfrac{(\kappa+2\iota-1)!}{(k+2j-2)!}\left(\tfrac{L_1}{L}\right)^{\!k+2j-\kappa-2\iota}+\\
&+\!\!\sum_{p=2}^{\lfloor\frac{k+2j}{2}\rfloor}\!\!\tfrac{(2mL_2)^p}{p!(k+2j-p+1)\ldots(k+2j-1)}
\sum_{\kappa,\iota} \tfrac{(\kappa+2\iota-p)!}{(k+2j-p)!}\left(\tfrac{L_1}{L}\right)^{\!k+2j-\kappa-2\iota}.
\end{align*}
For $L>L_1$, each of the two sums over $(\kappa,\iota)$ can be estimated from above by 3, hence, taking into account that $k+2j-p\geq\lceil\frac{k+2j}{2}\rceil$,
$$\|Q_{kj}\|\tfrac{ L^{-k-2j}}{(k+2j-1)!}\leq
\left(\tfrac{L_1}{L}\right)^{\!k+2j}\!\!+\tfrac{3\,mL_2}{k+2j-1}+
\tfrac{3\,(2mL_2)^2}{k+2j-1}\!\!\sum_{p=2}^{\lfloor\frac{k+2j}{2}\rfloor}\!\!\! \tfrac{1}{p!}\big(\!\tfrac{2mL_2}{\lceil\frac{k+2j}{2}\rceil+1}\big)^{\!p-2\!},$$
which can be made less than $\frac{1}{2\|M^{-1}\|}$ supposing that $L$ and $k+2j$ are large enough. 
And the same is true for $\|(k+1)y_{k+1,j-1}-(k-1)y_{k-1,j}\| \leq 2L^{-k-2j-1}\cdot
(k+2j-1)!$.
\qed\end{proof}



\subsubsection{Sectoral center manifold and its unfolding.}

For $\epsilon=0$ it is known that the equation \eqref{eq:BL-cm} has a unique solution in terms of a \emph{Borel summable} formal power series 
$\hat y_0(x)=\hat y(x,0)$ \eqref{eq:BL-formalsolution}. 
This a very classical theorem going essentially back to J. Horn, M. Hukuhara or 
J. Malmquist \cite{M1} among many, whose modern versions can be found for example in works of Braaksma \cite{BB}, Écalle, or Ramis and Sibuya \cite{RS}, (see also \cite{MR1} for the case $m=1$): 

\begin{theorem}\label{theorem:BL-BERS}
For $\epsilon=0$ the formal solution $\hat y_0(x)=\sum_{k=0}^{+\infty}y_{k0}x^k$ of the system
\eqref{eq:BL-vectsyst} is Borel summable in each direction $\alpha$ with $e^{i\alpha}\R^+$ disjoint to $\Spec M$.
Hence to each connected component $\Omega$ of $\,\C\sminus\bigcup_{\lambda\in\Spec M}\lambda\R^+$ in the Borel plane (Figure~\ref{figure:BL-4d1}) corresponds a unique
Borel sum of $\hat y_0(x)$, a solution of the system, defined on a sector in the $x$-plane of opening $>\pi$ and asymptotic to $\hat y_0(x)$.

More generally, for each $j\in\N$, the formal component $\hat y_j(x):=\sum_{k=0}^{+\infty}y_{kj}x^k$ of \eqref{eq:BL-formalsolution} is Borel summable in the same directions.
\begin{figure}[ht]
\centering
\includegraphics[width=0.5\textwidth]{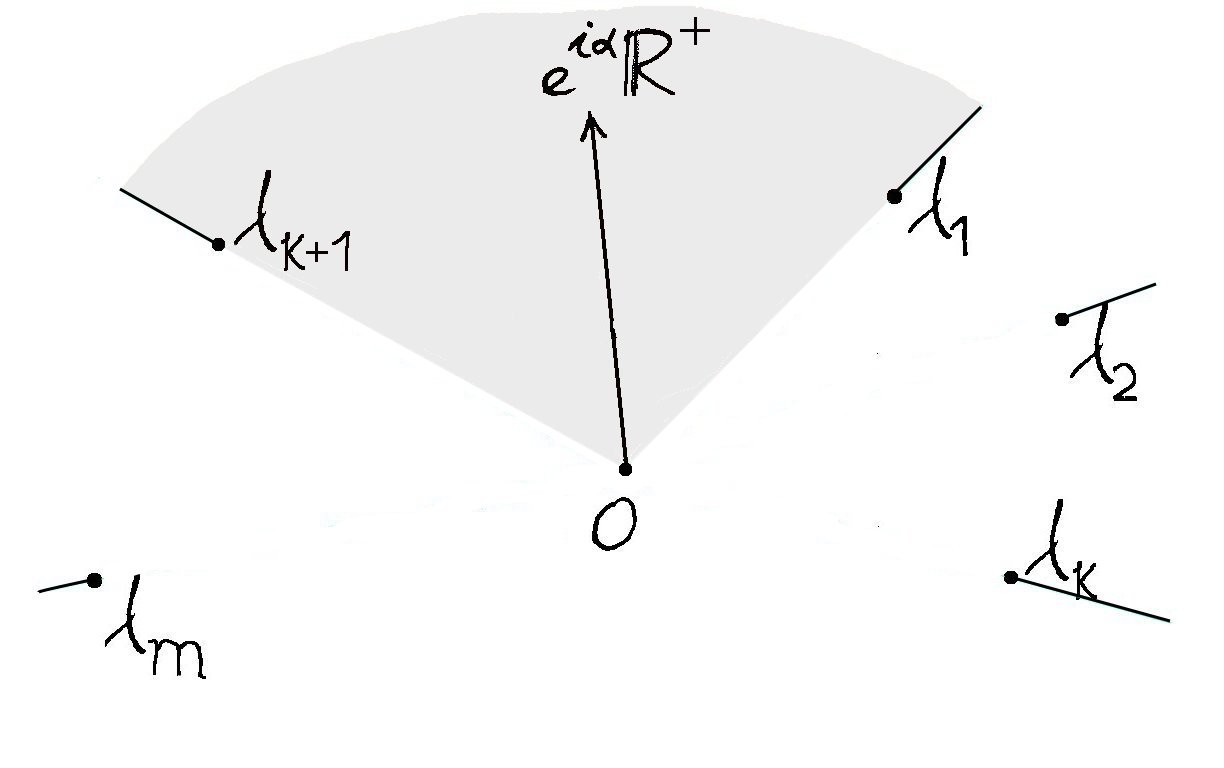}
\caption{The rays $\lambda\R^+$, $\lambda\in\Spec M$ divide the Borel plane in sectors.
The integration path $e^{i\alpha}\R^+$ of the Laplace transform $L_\alpha[\widehat B[\hat y_0]]$ varies in such sectors.
}
\vskip-12pt
\label{figure:BL-4d1}
\end{figure}
\end{theorem}

\begin{proof}
The Borel summability of $\hat y_j(x)$ is obtained by recursion on $j$ using
Theorem 4 in \cite{BB}: each $\hat y_j(x)$ is a formal solution to a system
of differential equations
$$x^2\frac{dy_j}{dx}=My_j+h_j(x)+g_j(x,y_j),\quad g_j(x,y_j)=O(x\|y_j\|),$$
with $h_j, g_j$ depending polynomially on $\hat y_0(x),\ldots,\hat y_{j-1}(x)$, thus Borel summ\-able in $x$ in
directions disjoint to $\Spec M$.
\qed
\end{proof}


This means that for each \emph{two opposite components} $\Omega^+, \Omega^-$ in the Borel plane (i.e. such that $\Omega^+\cup\Omega^-\cup\{0\}$ contains some straight line $e^{i\alpha}\R$),
the two corresponding sectors $X_0^{\pm}$ of summability form a covering of a neighborhood of the origin in the $x$-plane.
Theorem~\ref{theorem:BL-2} below shows that each such covering pair of sectors $\{ X_0^+,X_0^-\}$ unfolds for $\epsilon\neq 0$
to a single ramified domain $X(\!\sqrt\epsilon)$, adherent to both singular points $x=\pm\sqrt\epsilon$ (see Figure~\ref{figure:BL-4a}), 
on which there exists a unique bounded solution $y(x,\sqrt\epsilon)$ of \eqref{eq:BL-cm},
depending analytically on $\sqrt\epsilon$ taken from a sector $S$ of opening  $>\pi$,
that converge uniformly to the two respective Borel sums of $\hat y_0(x)$ on $X_0^+,X_0^-$, when $\sqrt\epsilon\to 0$.  
First we construct these domains.


\begin{definition}[Family of domains $X(\!\sqrt\epsilon)$, $\sqrt\epsilon\in S$]\label{definition:BL-X}
Let $\{\Omega^+,\Omega^-\}$ be a pair of opposite sectoral components of $\C\sminus\bigcup_{\lambda\in\Spec M}\lambda\R^+$, 
and let $\beta_1<\beta_2$ be such that $\bigcup_{\alpha\in\,]\beta_1,\beta_2[}e^{i\alpha}\R\subseteq\Omega^+\cup\Omega^-\cup\{0\}$.
For some $0<\eta<\frac{\beta_2-\beta_1}{4}$, $\rho>0$ and $\Lambda>0$, let
\begin{equation}\label{eq:BL-S}
S=\{\sqrt\epsilon\in\C \mid \arg\sqrt\epsilon\in\,]\beta_1-\pi+2\eta,\beta_2-2\eta[,\ |\sqrt\epsilon|<\rho\}\cup\{0\},
\end{equation}
and for each $\sqrt\epsilon\in S$ let 
\begin{equation}\label{eq:BL-TalphaLambda}
\begin{aligned}
\mathbf{T}_\alpha^+(\Lambda,\sqrt\epsilon):=&\{\Lambda<\Re(e^{i\alpha} t) < -\Re(\tfrac{e^{i\alpha}\pi i}{\sqrt\epsilon})-\Lambda \}, \\
\mathbf{T}_\alpha^-(\Lambda,\sqrt\epsilon):=&\{-\Lambda>\Re(e^{i\alpha} t) > \Re(\tfrac{e^{i\alpha}\pi i}{\sqrt\epsilon})+\Lambda \}, 
\end{aligned}
\end{equation}
be slanted strips in the time $t$-plane in direction $-\alpha+\frac{\pi}{2}$ that pass in between closed discs of radius $\Lambda$ centered at the points $0$ and $\mp\frac{\pi i}{\sqrt\epsilon}$. 
Define
$$T^\pm(\!\sqrt\epsilon)=\bigcup_\alpha \mathbf{T}_\alpha^\pm(\Lambda,\sqrt\epsilon) $$ 
(see Figure~\ref{figure:BL-6}) as their union with varying $\alpha$ \footnote{These $\alpha$ will later correspond to the direction of the unfolded Laplace integrals \eqref{eq:BL-unfoldedBL}, and $\mathbf{T}_\alpha^\pm(\Lambda,\sqrt\epsilon)$ to their strips of convergence.} 
\begin{equation}\label{eq:BL-alpha}
\max\{\arg\sqrt\epsilon,\beta_1\}+\eta<\alpha<\min\{\beta_2,\arg\sqrt\epsilon+\pi\}-\eta.
\end{equation}

\begin{figure}[p]
\centering
\includegraphics[width=\textwidth]{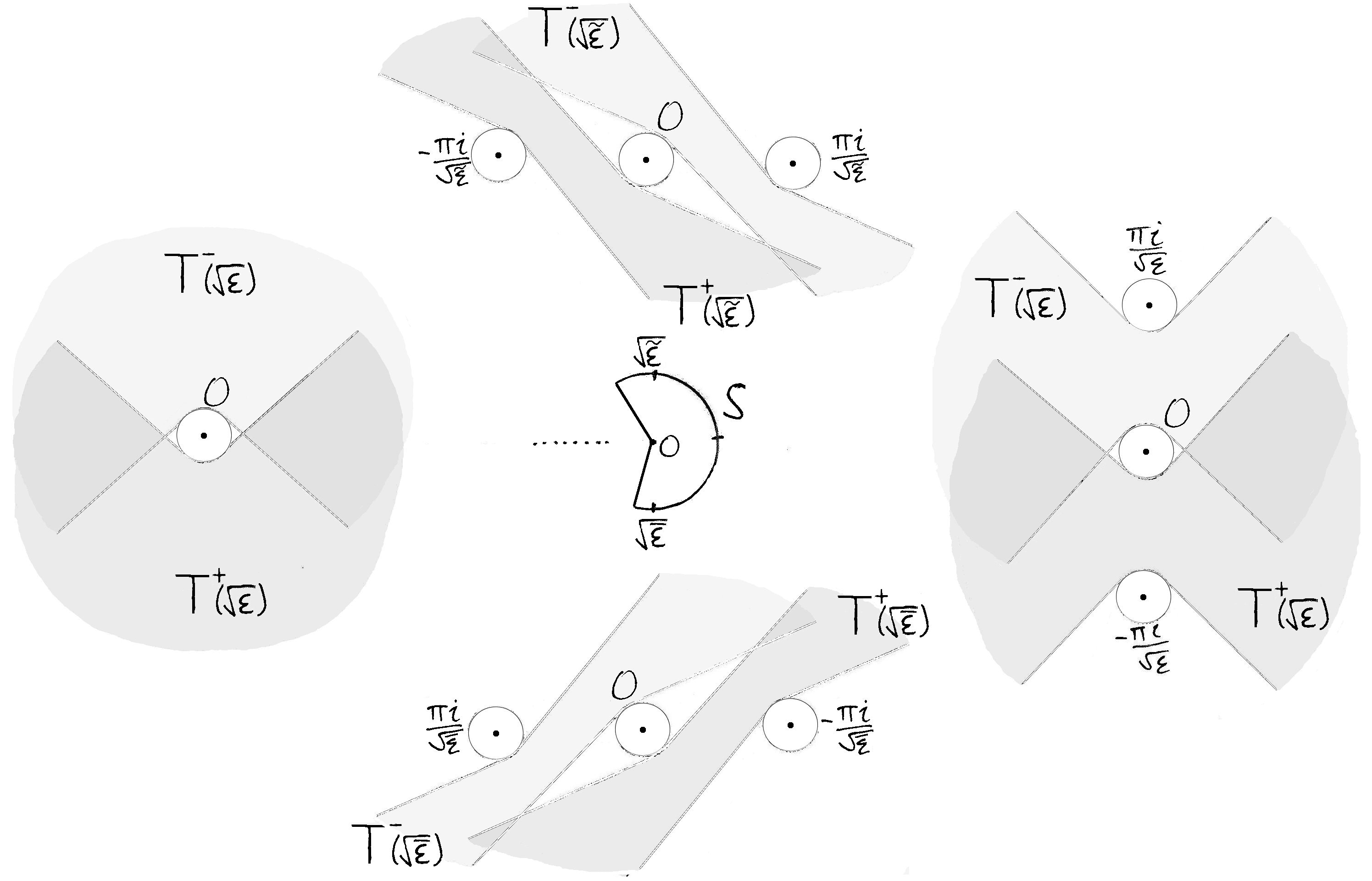}
\vskip6pt
\caption{The domains  $T^\pm(\!\sqrt\epsilon)$ in the time $t$-plane \eqref{eq:BL-t} according to $\sqrt\epsilon\in S$. (Here $\beta_1\sim\frac{\pi}{4},\ \beta_2\sim\frac{3\pi}{4},\ \eta\sim 0$).
}
\label{figure:BL-6}
\bigskip
\centering
\includegraphics[width=0.83\textwidth]{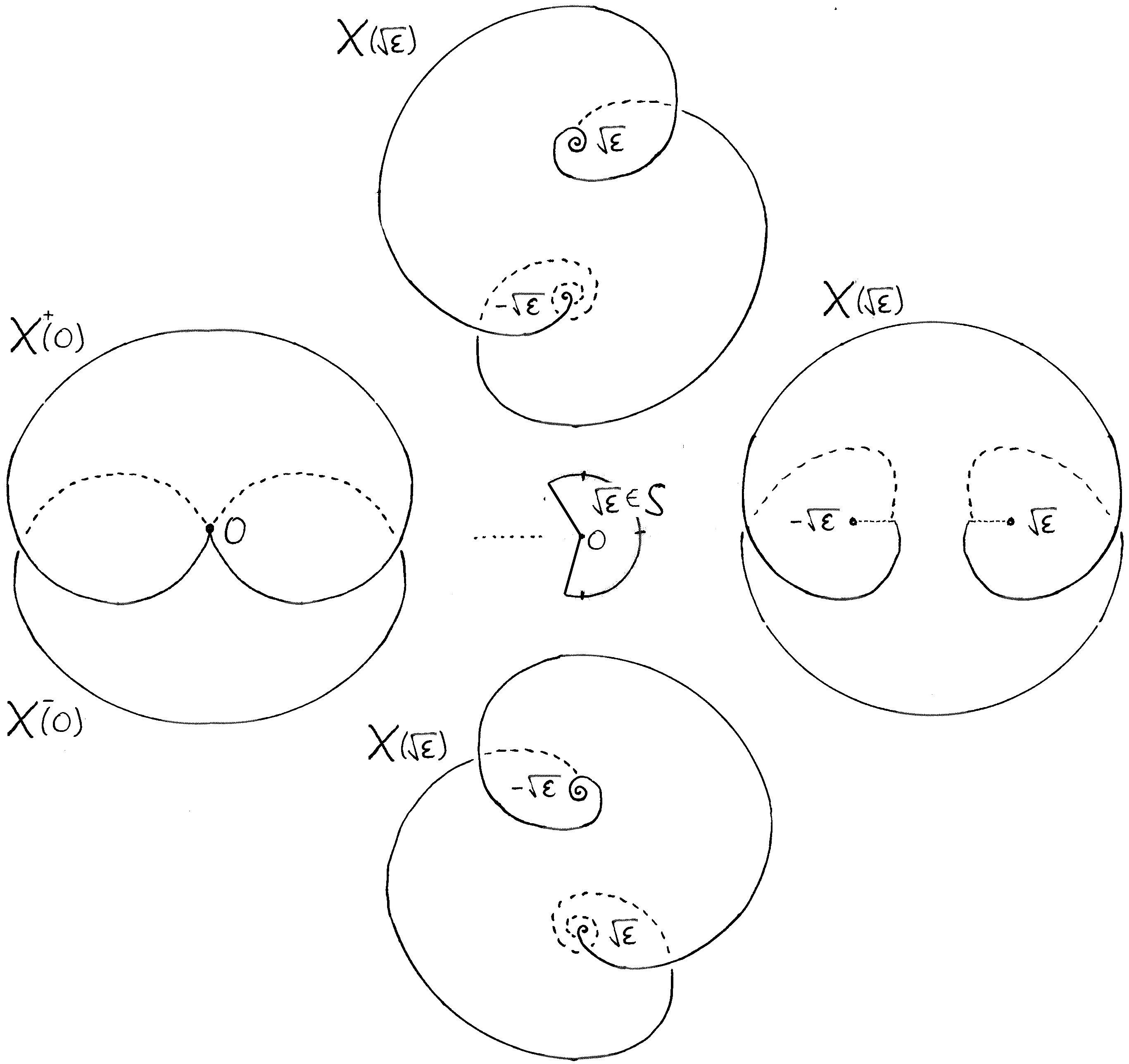}
\caption{Example of the spiraling domains $X(\!\sqrt\epsilon)$ of Theorem~\ref{theorem:BL-2}\,(i) according to $\sqrt\epsilon\in S$.}
\label{figure:BL-4a}
\end{figure}

We define the domains $X^\pm(\!\sqrt\epsilon)$ (see Figure~\ref{figure:BL-4a}) as simply connected ramified projections of $T^\pm(\!\sqrt\epsilon)$ to the
$x$-coordinate \footnote{More precisely to a covering space of the $x$-plane ramified at  $\{\sqrt\epsilon,-\!\sqrt\epsilon\}$, the Riemann surface of $t(x,\epsilon)$ \eqref{eq:BL-t}.} 
by the map
\begin{equation}\label{eq:BL-projection_x}
t\mapsto x(t,\epsilon)=\left\{ 
   \begin{array}{ll}
     \sqrt\epsilon\frac{1+e^{-2\sqrt\epsilon t}}{1-e^{-2\sqrt\epsilon t}}=\sqrt\epsilon\coth\sqrt\epsilon t, \quad &  \text{if $\ \epsilon\neq 0$,}\\[6pt]
     \frac{1}{t}, & \text{if $\ \epsilon=0$,}\\
   \end{array}\right.
\end{equation}
the inverse of \eqref{eq:BL-t},
to which we adjoin the ramification points $\{\sqrt\epsilon,-\!\sqrt\epsilon\}$
(which are approached from within the interior of 
$X(\!\sqrt\epsilon)$ following logarithmic spirals).
For $\sqrt\epsilon\neq 0$, $X^+(\!\sqrt\epsilon)=X^-(\!\sqrt\epsilon)=:X(\!\sqrt\epsilon)$.
On the other hand, $X^+(0)$ is a sectoral domain with $\arg x\in\,]\beta_1-\frac{\pi}{2}+\eta,\beta_2+\frac{\pi}{2}-\eta[$,
while $X^-(0)$ is its opposite -- we define $X(0)$ as their ramified union with $0$ as the only common point.

Clearly, the domains $X(\!\sqrt\epsilon)$ depend continuously on $\sqrt\epsilon\in S\sminus\{0\}$ and
they converge, when $\sqrt\epsilon\to 0$ radially with $\arg\sqrt\epsilon=\beta$, 
to a pair of sub-domains $X^\pm_\beta(0)$ of $X^\pm(0)$, and $X^\pm(0)$ is the union of all these radial limits.
  
If the radius $\rho$ of $S$ is taken small enough, then there exists a fixed neighborhood of the origin in the $x$-plane covered by each domain $X(\!\sqrt\epsilon)$, $\sqrt\epsilon\in S$. 
\end{definition}

\begin{remark}\label{remark:BL-X}
The ramified domains $X(\!\sqrt\epsilon)$ are swept by complete real trajectories of the complex vector fields
$e^{i(\frac{\pi i}{2}-\alpha)}(x^2\!-\epsilon)\frac{\partial}{\partial x}$, with $\alpha$ as in \eqref{eq:BL-alpha}, that stay forever within a neighborhood of $0$ of radius $\sim\frac{1}{\Lambda}$, and tend to the point
$\sqrt\epsilon$ (resp. $-\sqrt\epsilon$) in negative (resp. positive) time.
\end{remark}

\begin{theorem}\label{theorem:BL-2}
Consider a system \eqref{eq:BL-cm} with $M$ an invertible $m\!\times\! m$-matrix and
$\,f(x,\epsilon,y)$ as in \eqref{eq:BL-f}.

\smallskip
\noindent
\textbf{(i)}
Let $\{\Omega^+,\Omega^-\}$ be a pair of opposite components of $\C\sminus\bigcup_{\lambda\in\Spec M}\lambda\R^+\,$ (i.e. such that $\Omega^+\cup\Omega^-\cup\{0\}$ contains some straight line $e^{i\alpha}\R$).
For any arbitrarily small angle $\nu>0$ there are $\Lambda,\rho>0$, such that on the corresponding family of domains $X(\!\sqrt\epsilon)$, $\sqrt\epsilon\in S$, of Definition~\ref{definition:BL-X}, 
\textbf{there is a unique bounded analytic solution $y(x,\sqrt\epsilon)$ to \eqref{eq:BL-cm}.} 
It is uniformly continuous on 
$$X=\{(x,\sqrt\epsilon)\mid x\in X(\!\sqrt\epsilon)\}$$
and analytic on the interior of $X$, and it vanishes (is uniformly $O(x^2\!-\epsilon)$) at the singular points. 
When $\sqrt\epsilon$ tends radially to $0$ with $\arg\sqrt\epsilon=\beta$, then 
$y(x,\sqrt\epsilon)$ converges to $y(x,0)$ uniformly on compact sets of the sub-domains 
$X_\beta^\pm(0)\subseteq X^\pm(0)$, and the restriction of $y(x,0)$ to $X^\pm(0)$ is the Borel sum of the formal series $\hat y(x,0)$ \eqref{eq:BL-formalsolution} in directions of $\Omega^\pm$.

The solution $y(x,\sqrt\epsilon)$, and its domain $X$, associated to each pair $\{\Omega^+,\Omega^-\}$ are unique up to the reflection $(x,\sqrt\epsilon)\to(x,-\sqrt\epsilon)$, or analytic extensions.

\smallskip
\noindent
\textbf{(ii)} 
If, moreover, the spectrum of the matrix $M$ is of \emph{Poincaré type}  
(the convex hull of $\Spec M$ does not contain 0 inside or on the boundary), 
i.e. if there exists a (unique) component $\Omega_1$ of $\C\sminus\bigcup_{\lambda\in\Spec M}\lambda\R^+$ of opening $>\pi$, 
then the solution $y_1(x,\sqrt\epsilon)$ on the domain $X_1(\!\sqrt\epsilon),\ {\sqrt\epsilon\in S_1}$, associated to the pair $\{\Omega_1,\Omega_1\}$
is ramified only at one of the singular points, and analytic at the other (see Figure~\ref{figure:BL-4b}).

Such is the case in dimension $m=1$.
\begin{figure}[t]
\centering
\includegraphics[width=0.83\textwidth]{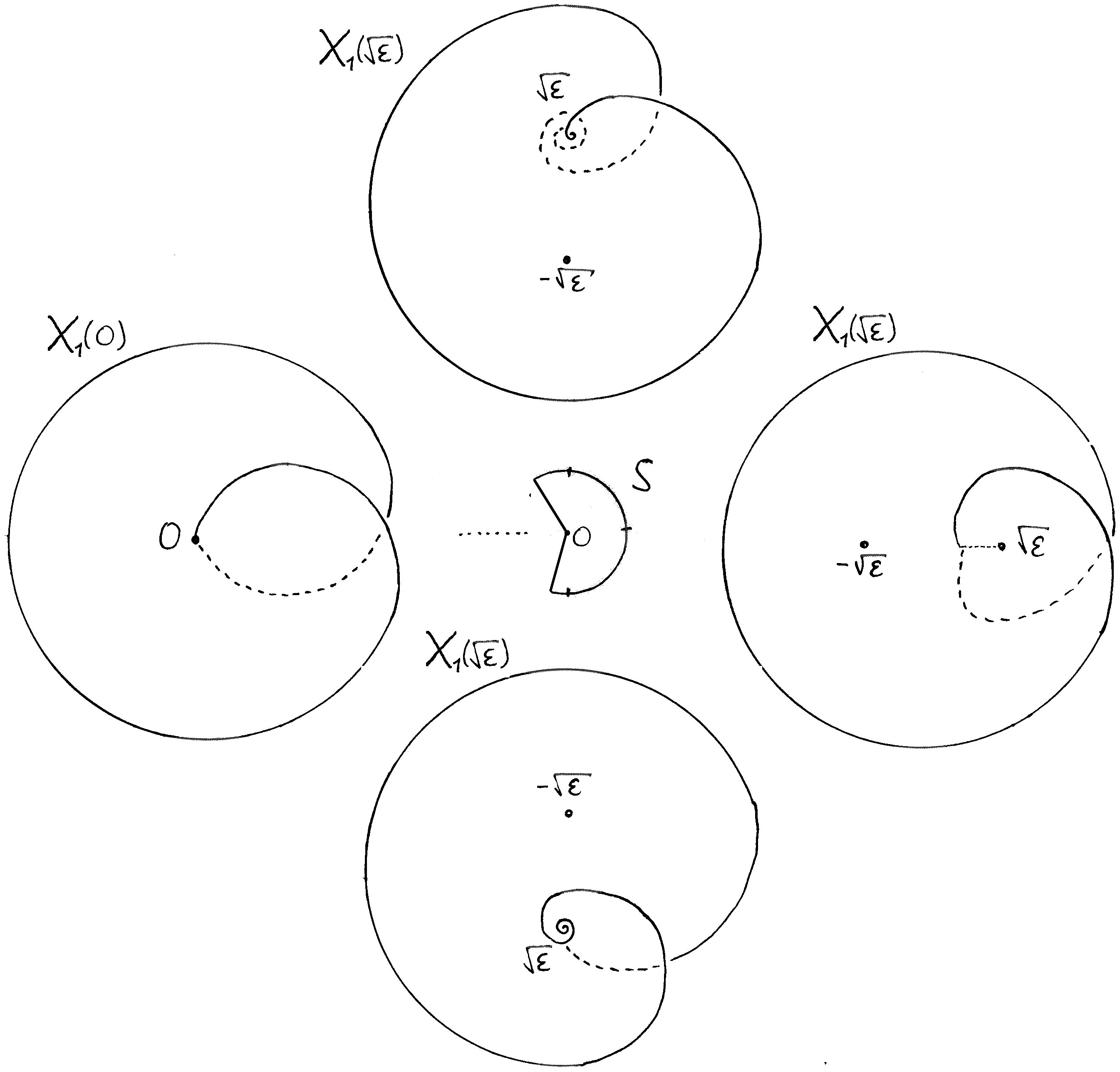}
\caption{Example of the spiraling domains $X_1(\!\sqrt\epsilon)$  of Theorem~\ref{theorem:BL-2}\,(ii) according to $\sqrt\epsilon\in S_1$.}
\label{figure:BL-4b}
\end{figure}

\end{theorem}

The solutions $y(x,\sqrt\epsilon)$ will be constructed in Section~\ref{sec:BL-4} in form of two-sided Laplace integrals
\begin{equation*}
y(x(t,\epsilon),\sqrt\epsilon)=\int_{-\infty e^{i\alpha}}^{+\infty e^{i\alpha}}\upsilon^\pm(\xi,\sqrt\epsilon)\, e^{-t\xi}\,d\xi,\qquad t\in\mathbf{T}_\alpha^\pm(\Lambda,\sqrt\epsilon),
\end{equation*}
with $\upsilon^\pm$ a solution to a non-linear convolution equation (corresponding to \eqref{eq:BL-cm} via the unfolded Borel/Laplace transformations \eqref{eq:BL-unfoldedBL}) on strips in the Borel plane, which will be obtained using a fixed-point argument.



\begin{proposition}\label{proposition:BL-1}
Let $y(x,\sqrt\epsilon)$ be the solution of Theorem I, 
let $\sqrt{\bar\epsilon},\, \sqrt{\tilde\epsilon} =e^{\pi i}\sqrt{\bar\epsilon}\linebreak[0]\in S$ be two opposite roots of $\epsilon$ as in Figure~\ref{figure:BL-6},
and let $\bar y^\pm(t,\sqrt\epsilon)$ (resp. $\tilde y^\pm(t,\sqrt\epsilon)$)
be the lifting of $y(x,\sqrt{\bar\epsilon})$  on $T^\pm(\!\sqrt{\bar\epsilon})$ (resp. $y(x,\sqrt{\tilde\epsilon})$ on $T^\pm(\!\sqrt{\tilde\epsilon})$).
There are constants $0<R<\min_{\lambda\in\Spec M}|\lambda|$ and $C_1,C_2>0$  independent of $\epsilon$, such that
for every $t\in T^\pm(\!\sqrt{\tilde\epsilon})\cap T^\mp(\!\sqrt{\bar\epsilon})$, see Figure~\ref{figure:BL-6a},
\begin{equation}\label{eq:BL-difference}
\|\tilde y^\pm(t,\sqrt\epsilon)-\bar y^\mp(t,\sqrt\epsilon)\| \leq \left(C_1+\tfrac{C_2}{d(t,\epsilon)}\right) e^{-R d(t,\epsilon)},
\end{equation}
where $d(t,\epsilon)$ denotes the distance of $t$ from the border of $T^\pm(\!\sqrt{\tilde\epsilon})\cap T^\mp(\!\sqrt{\bar\epsilon})$. 
\end{proposition}

\begin{figure}[t]
\centering
\includegraphics[width=0.8\textwidth]{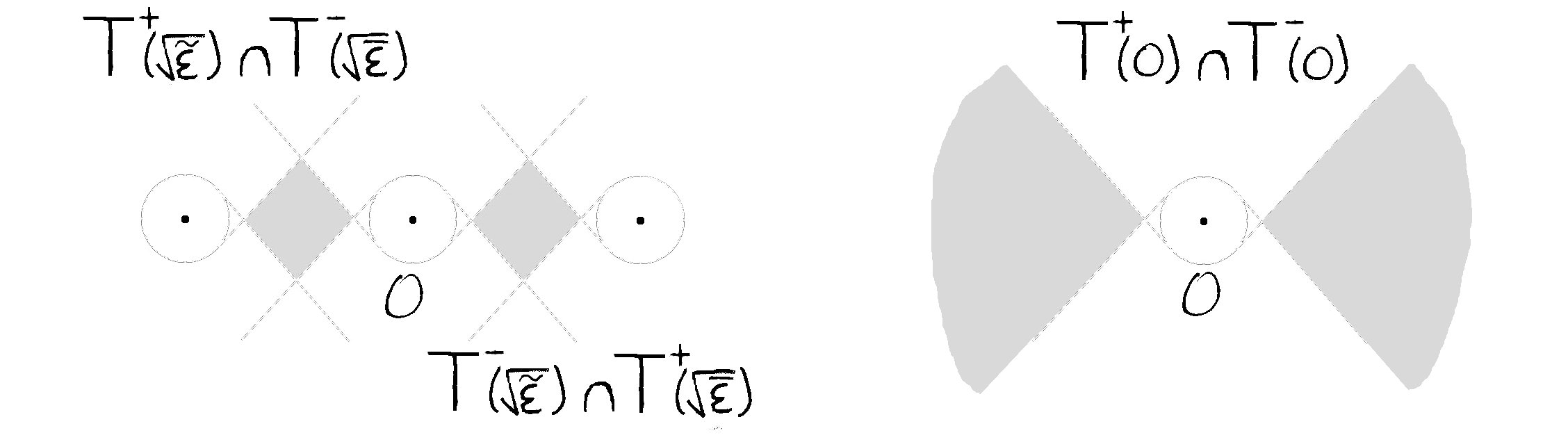}
\caption{The intersections of the domains $T^\pm(\!\sqrt{\tilde\epsilon})\cap T^\mp(\!\sqrt{\bar\epsilon})$
of Figure~\ref{figure:BL-6} for $\sqrt{\bar\epsilon},\, \sqrt{\tilde\epsilon} =e^{\pi i}\sqrt{\bar\epsilon}$ (left)
and their limits as $\epsilon\to 0$ (right).
}
\label{figure:BL-6a}
\end{figure}

\subsubsection{Hadamard--Perron interpretation for $\epsilon\neq 0$ and convergence of local analytic solutions.}

The linearization of the vector field \eqref{eq:BL-vectsyst} at $x=\pm\sqrt\epsilon$ is equal to
\begin{equation}\label{eq:BL-linearizedsystem} 
~\qquad
\dot x=\pm 2\sqrt\epsilon\,(x\mp\sqrt\epsilon),\qquad \dot y=M_{\pm\sqrt\epsilon}\, y, \qquad
M_{\pm\sqrt\epsilon}=M+O(\sqrt\epsilon).
\end{equation}

\textbf{(i)} 
Let a line $e^{i\alpha}\R$ separate the point $2\sqrt\epsilon$ and $k$ of the eigenvalues of $M$ 
from the point $-2\sqrt\epsilon$ and the other $m-k$ eigenvalues ($0\leq k\leq m$), see Figure~\ref{figure:BL-4c}.
Then for $\epsilon$ small enough, the respective eigenvalues of $M_{\pm\sqrt\epsilon}$ lie on the same sides of the line $e^{i\alpha}\R$, hence 
by the Hadamard--Perron theorem \cite{IY} the vector field \eqref{eq:BL-vectsyst} has a unique $(k+1)$-dimensional local invariant manifold at $(\sqrt\epsilon,0)$ 
tangent to the $x$-axis and the corresponding $k$ eigenvectors of $M_{\sqrt\epsilon}$, 
and a unique $(m-k+1)$-dimensional local invariant manifold at $(-\sqrt\epsilon,0)$ 
tangent to the $x$-axis and the corresponding $m-k$ eigenvectors of $M_{-\sqrt\epsilon}$. 
They intersect transversally as the graph of the solution $y(x,\sqrt\epsilon)$ of Theorem~\ref{theorem:BL-2}.
Since the root parameter $\sqrt\epsilon$ can vary as long as $\pm2\sqrt\epsilon$ stay in their respective half-planes bounded by $e^{i\alpha}\R$, whose angle $\alpha$ can also vary a bit,
this gives a sector $S$ of opening $>\pi$. 
We see that one cannot continue this description in $\sqrt\epsilon$ beyond such maximal sector $S$.

\begin{figure}[t]
\centering
\includegraphics[width=0.66\textwidth]{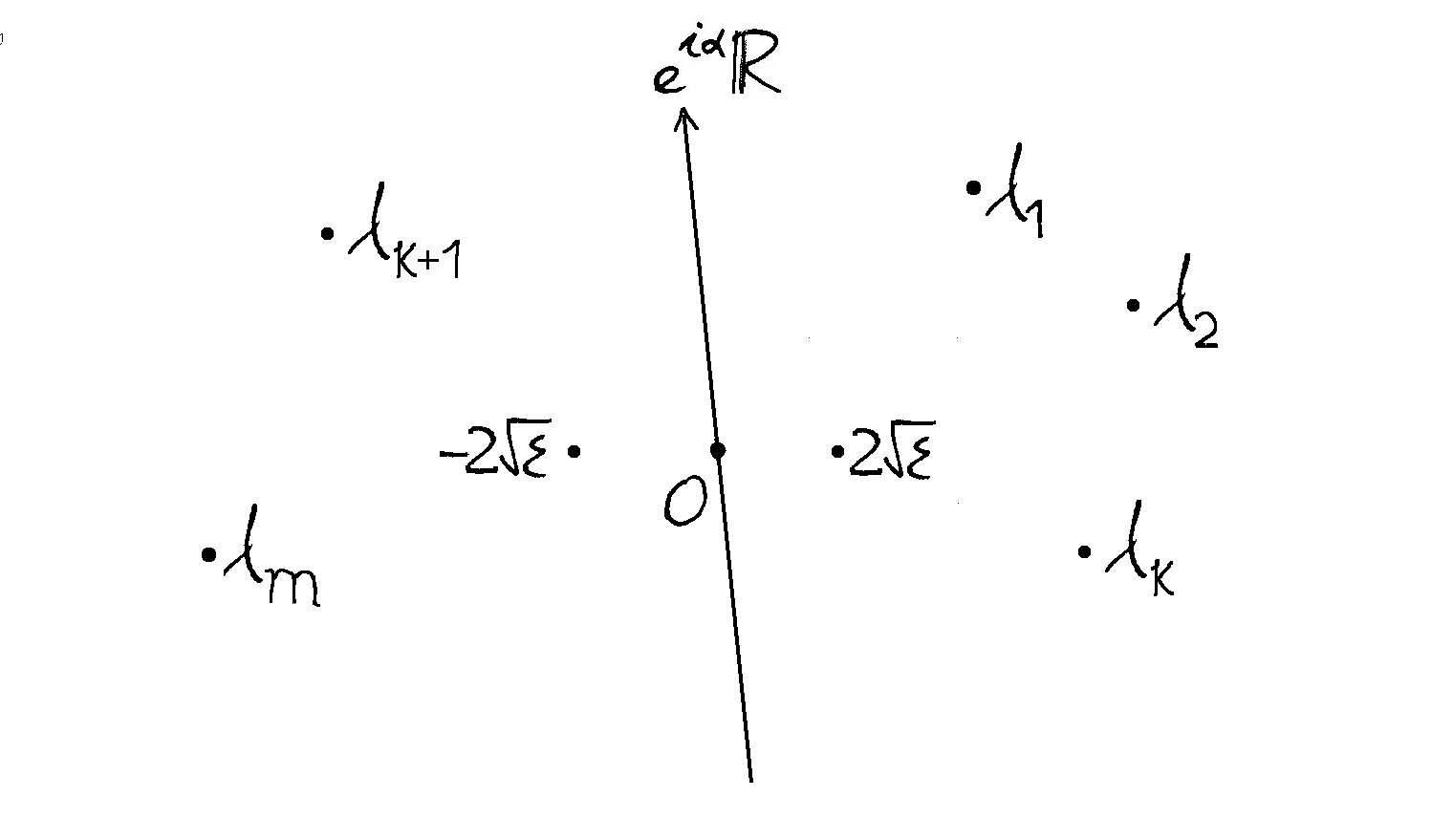}
\caption{The spectrum of $M$ in the Borel plane; the line $e^{i\alpha}\R$ is the dividing line of the Hadamard--Perron theorem
and also the integration path of the Laplace transform $\Cal L_\alpha$ \eqref{eq:BL-ypm}.
}
\label{figure:BL-4c}
\end{figure}

\smallskip
\textbf{(ii)}
If all the eigenvalues of $M$ are in a same open sector of opening $<\pi$ (i.e. $\Spec M$ is of Poincaré type),
and $-2\sqrt\epsilon$ lies in the interior of the complementary sector $\Omega_1$ of opening $>\pi$,
then one obtains the solution $y_1(x,\sqrt\epsilon)$ from Theorem~\ref{theorem:BL-2} as a continuation of the local analytic solution at $x=-\sqrt\epsilon$
(i.e. of the local invariant manifold of \eqref{eq:BL-vectsyst} tangent to the $x$-axis, provided by the Hadamard--Perron theorem) 
to the domain $X_1(\!\sqrt\epsilon)$. 

\smallskip
While this Hadamard--Perron approach explains where do the solutions of Theorem~\ref{theorem:BL-2} come from, it does not provide their natural domain on which they are bounded.
One should however notice the similarities between the description  provided by the Hadamard--Perron theorem for $\epsilon\neq 0$ (Figure~\ref{figure:BL-4c}) and that 
of the Borel summation for $\epsilon=0$ (Figure~\ref{figure:BL-4d1}). 
In Section~\ref{sec:BL-4} we will unify the two of them using the unfolded Borel--Laplace transformations.

\begin{remark}[Local invariant manifolds for non-resonant $\epsilon\neq 0$ and their convergence]\label{remark:BL-localsolutions}
If the simple singular point of \eqref{eq:BL-vectsyst} at $x=\sqrt\epsilon\neq 0$ satisfies the following 
non-resonance condition 
$$2\sqrt\epsilon\,\N\cap\Spec M_{\sqrt\epsilon}=\emptyset,$$
then it is known that the equation \eqref{eq:BL-cm} possesses a unique convergent formal solution near $x=\sqrt\epsilon$, i.e.
the vector field \eqref{eq:BL-vectsyst} has a 1-dimensional local  analytic invariant manifold tangent to the $x$-axis at the singularity.
The resonant values $\sqrt\epsilon= \frac{\lambda}{2n}$, $\lambda\in\Spec M_{\sqrt\epsilon}$, $n\in\N_{>0}$, accumulate at the origin along the rays $\lambda\R^+$, $\lambda\in\Spec M$, dividing the $\sqrt\epsilon$-plane in a finite number of sectors (Figure~\ref{figure:BL-4d}). 
The following theorem was proven by A. Glutsyuk \cite{G}.

\begin{theorem}[Glutsyuk]\label{theorem:BL-localsolutions}
If $\sqrt\epsilon\neq 0$ lies inside one of these sectors (i.e. $\sqrt\epsilon\,\mathbb{R}^+\cap\text{Spec}\, M=\emptyset$), 
then the local analytic solution at $x=\sqrt\epsilon$ converges, when $\sqrt\epsilon$ tends radially to 0, 
to the sectoral Borel sum $L_\alpha[\widehat B[\hat y_0]](x)$ of the formal solution of the limit system (cf. Figure~\ref{figure:BL-4d1}), 
where $\alpha=\arg\sqrt\epsilon$ (this is the direction on which lies the corresponding eigenvalue $2\sqrt\epsilon$ of the linearization \eqref{eq:BL-linearizedsystem}).
\end{theorem}

Unless the spectrum of $M$ is of Poincaré type, these sectors in the $\sqrt\epsilon$-plane  
on which the convergence happens are of opening $<\pi$. 

\begin{figure}[t]
\centering
\includegraphics[width=0.56\textwidth]{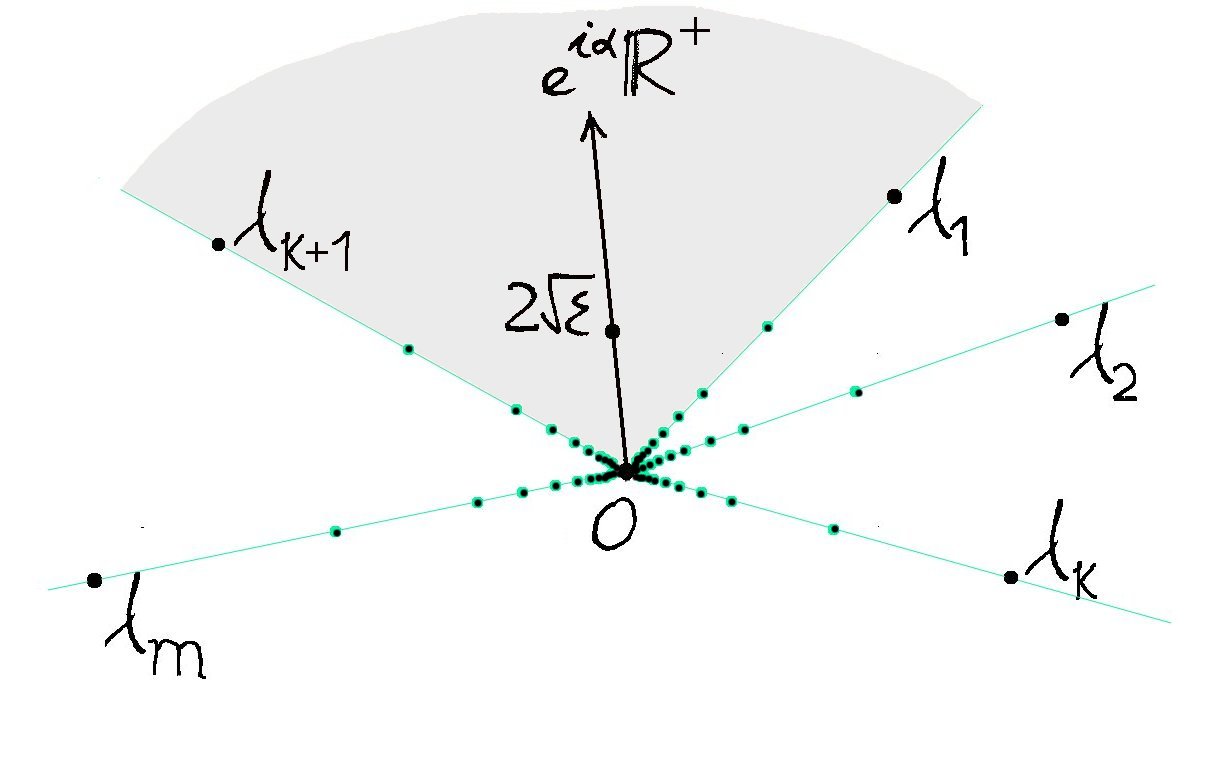}
\caption{The resonant values of $\sqrt\epsilon= \frac{\lambda}{2n}$, $\lambda\in\Spec M_{\sqrt\epsilon}$, $n\in\N_{>0}$,  accumulate along the rays $\lambda\R^+$, dividing the $\sqrt\epsilon$-plane in sectors
on which the local analytic solutions near $x=\sqrt\epsilon\neq0$ converge as $\sqrt\epsilon\to 0$ to sectoral solutions.
}
\label{figure:BL-4d}
\end{figure}

\end{remark}

\subsubsection{Asymptotic expansions}

\paragraph{Inner asymptotic expansion.}
Blowing-up the $x$-coordinate let $z=\frac{x}{\sqrt\epsilon}$ and
$$ Y(z,\sqrt\epsilon):= y(\sqrt\epsilon z,\sqrt\epsilon),\quad \sqrt\epsilon z\in X(\!\sqrt\epsilon),$$
be the solution of Theorem~\ref{theorem:BL-2}, and
\begin{equation}\label{eq:BL-innerexp}
\hat y(\sqrt\epsilon z,\sqrt\epsilon)=:\hat Y(z,\sqrt\epsilon)=\sum_{p=2}^{+\infty} Y_p(z)\sqrt\epsilon^{\,p}, \quad
Y_p(z)=\sum_{\substack{0\leq k\leq p \\ k+2j=p}} y_{kj}z^k
\end{equation}
the formal solution \eqref{eq:BL-formalsolution}.
Since,
$$\sqrt{\epsilon} \,t(x,\epsilon)=t(z,1),$$
it follows from Proposition~\ref{proposition:BL-1}, that for $\sqrt{\tilde\epsilon}$ in the intersection $S\cap e^{\pi i}S$ and a fixed $z$, the difference 
$|Y(z,\sqrt{\tilde\epsilon})-Y(z,-\sqrt{\tilde\epsilon})|$ 
is exponentially flat in $\sqrt{|\tilde\epsilon|}$, therefore by the Ramis--Sibuya theorem (\cite{Si}, \cite{Bal}) 
the bounded function $Y(z,\sqrt\epsilon)$ possesses an asymptotic expansion of Gevrey order 1 on $S$, equal to $\hat Y(z,\sqrt\epsilon)$ by its uniqueness, and therefore it is also its Borel sum  on $S$
(the opening of $S$ is $>\pi$). 
We need yet to specify the domain of $z$ on which this is true.
First, remark that 
$$t(x,\tilde\epsilon)\in T^\pm(\sqrt{\bar\epsilon})\cap T^\mp(\sqrt{\tilde\epsilon})=
\mathbf{T}_{\beta_2-\eta}^\pm(\Lambda,\sqrt{\tilde\epsilon})\cap \mathbf{T}_{\beta_1+\eta}^\pm(\Lambda,\sqrt{\tilde\epsilon}),$$
where $\beta_1,\beta_2,\eta$ are as in Definition~\ref{definition:BL-X}, and $\sqrt{\tilde\epsilon}\in S\cap e^{\pi i}S$,
$\sqrt{\bar\epsilon}=e^{-\pi i}\sqrt{\tilde\epsilon}$. 
Hence $t(z,1)$ belongs to a limit of such rhomboidal domains scaled by $\sqrt{\tilde\epsilon}$, as $\sqrt{\tilde\epsilon}\to 0$ radially
with $\arg\sqrt{\tilde\epsilon}=\beta+\frac{\pi}{2}$
($\beta$ being the direction of the Borel summation): 
\begin{equation}\label{eq:BL-tz}
t(z,1)\in\mathbf{T}_{\beta_2-\eta-\beta-\frac{\pi}{2}}^\pm(0,1)\cap \mathbf{T}_{\beta_1+\eta-\beta-\frac{\pi}{2}}^\pm(0,1).
\end{equation}
In the $z$-coordinate, $z=\coth t(z,1)$, this corresponds to the limit of central region of the intersection $\frac{1}{\sqrt{\tilde\epsilon}}X(\!\sqrt{\tilde\epsilon})\cap \frac{1}{\sqrt{\bar\epsilon}}X(\!\sqrt{\bar\epsilon})$,
which covers a neighborhood of the origin and extends towards $\infty$ as a double sector $\arg z\in\mp\frac{\pi}{2}-\beta\,+\,]\beta_1+\eta,\beta_2-\eta[$.

\begin{proposition}[Inner asymptotic expansion]\label{theorem:BL-IAE}
The blow-up $\,Y(z,\sqrt\epsilon)=y(\sqrt\epsilon z,\sqrt\epsilon)$ of the sectoral solution of Theorem~\ref{theorem:BL-2} is equal to the Borel sum in $\sqrt\epsilon$ of the blown-up formal solution $\hat Y(z,\sqrt\epsilon)=\sum_{p=2}^{+\infty} Y_p(z)\sqrt\epsilon^p$, for $\sqrt\epsilon\in S\sminus\{0\}$ and $z\in\frac{1}{\sqrt\epsilon}X(\!\sqrt\epsilon)$ satisfying \eqref{eq:BL-tz} for some direction $\beta$ covered by $e^{-\frac{\pi i}{2}}S\cap e^{\frac{\pi i}{2}}S$ with $|\beta-\arg\sqrt\epsilon|<\frac{\pi}{2}$.
\end{proposition}

\begin{remark}
The blow-up transforms \eqref{eq:BL-cm} to a singularly perturbed equation.
\end{remark}

Returning back to the $x$-coordinate and using slightly modified version of the Borel-Laplace summation operators, following \cite{Bal2}, we obtain:

\begin{theorem}[Borel sum of $\hat y(x,\sqrt\epsilon)$]
\label{theorem:BL-3}
Let $U(x,\epsilon)$ be analytic extension of the function given by the convergent series in $(x,\epsilon)$ 
$$U(x,\epsilon)=\sum_{j,k} \frac{y_{kj}}{(k+2j)!}x^k\epsilon^j.$$ 
For each point $(x,\sqrt\epsilon)$, for which there is an angle $\theta \in\,]\!-\!\frac{\pi}{2},\frac{\pi}{2}[$ such that
the set $\mbox{$\mathbf{S}_\theta\cdot(x,\sqrt\epsilon)$}\subseteq X$, with $\mathbf{S}_\theta\subset\C$ denoting the circle  through the points $0$ and $1$ with  center on $e^{i\theta}\R^+$,
we can express $y(x,\sqrt\epsilon)$ as the 
following Laplace transform of $U$:
\begin{equation}\label{eq:BL-Ly}
y(x,\sqrt\epsilon)=\int_0^{+\infty e^{i\theta}}\!\!\!U(sx,s^2\epsilon)\,e^{-s}\,ds.
\end{equation}
\end{theorem}


\begin{proof}
For $\sqrt\epsilon\neq 0$, 
the formal Borel transform of $\sqrt\epsilon\,\hat Y(z,\sqrt\epsilon)$ w.r.t. $\sqrt\epsilon$ is equal to 
$U(\sqrt\epsilon z,\epsilon)=\widehat B[\sqrt\epsilon\,\hat Y(z,\sqrt\epsilon)](\sqrt\epsilon)$, hence by Proposition~\ref{theorem:BL-IAE}, 
$$y(x,\epsilon)=Y(z,\sqrt\epsilon)=\frac{1}{\sqrt\epsilon}\int_{0}^{+\infty e^{i\alpha}}\!\!\!\! U(\nu z,\nu^2)\,e^{-\frac{\nu}{\sqrt\epsilon}}\,d\nu=\int_0^{+\infty e^{i\theta}}\!\!\!\!U(sx,s^2\epsilon)\,e^{-s}\,ds$$
after substitution $\nu=\sqrt\epsilon\,s$.
For $\sqrt\epsilon=0$, 
$U(\xi,0)=\widehat B[x\,\hat y(x,0)](\xi)$ w.r.t. $x$, thus one obtains \eqref{eq:BL-Ly} by substituting $\xi=sx$ in the classical Laplace transform \eqref{eq:BL-laplace} of $U(\xi,0)$. Moreover and the condition on the points $(x,0)\in X$ is satisfied for all $x\in X^\pm(0)$.
\qed\end{proof}

\begin{remark}
The Borel-Laplace summation of Theorem~\ref{theorem:BL-3} preserves algebraic operations as well as differentiation with respect to $x,\epsilon$
\cite{Bal2}.
This means that the sectoral solution $y(x,\sqrt\epsilon)$ and the formal solution $\hat y(x,\epsilon)$ satisfy the same polynomial $\{\frac{\partial}{\partial x},\frac{\partial}{\partial\epsilon}\}$-differential relations over the ring $\C\{x,\epsilon\}$ of convergent series.
\end{remark}

\paragraph{Outer asymptotic expansion.}
Let $\hat y(x,\epsilon)=\sum_{k,j=0}^{+\infty}y_{kj}x^k\epsilon^j$ be the formal solution \eqref{eq:BL-formalsolution} and let $y_j^\pm(x)$, $j\in\N$, be Borel sums of $\hat y_j(x)=\sum_{k=0}^{+\infty} y_{kj}x^k$ 
provided by Theorem~\ref{theorem:BL-BERS} on the domains $X^\pm(0)$.
One can then consider the formal series in $\epsilon$
\begin{equation}\label{eq:BL-outerexp}
\sum_{j=0}^{+\infty}y_j^\pm(x)\,\epsilon^j.
\end{equation}
It has been shown in \cite{Pa} (in the case of normalizing transformations for non-resonant irregular linear systems, 
cf. Section~\ref{subsec:BL-1.3} below) that 
the sectoral solution $y(x,\sqrt\epsilon)$ of Theorem~\ref{theorem:BL-2} is asymptotic to \eqref{eq:BL-outerexp} of Gevrey order 1 in $\sqrt\epsilon$
on a sector $S'\subset S$ on which both singularities $\{\sqrt\epsilon,-\sqrt\epsilon\}$ are inside 
the same domain $X^\pm(0)$, i.e. $\sqrt\epsilon\in X^+(0)\cap X^-(0)$.
We call \eqref{eq:BL-outerexp} an \emph{outer asymptotic expansion}
as it is defined for $x$ in an ``outer'' region, $|x|>|\!\sqrt\epsilon|$. 

\begin{remark}
The inner \eqref{eq:BL-innerexp} and outer \eqref{eq:BL-outerexp} asymptotic expansions corresponding to $\hat y(x,\epsilon)$ can be seen as a special case of a \emph{composite asymptotic expansion} with a trivial fast part in the sense of \cite{FS}, with the small difference that the functions $y_j^\pm(x)\sim\sum_{k=0}^{+\infty}y_{kj}x^k$ are sectoral, rather than  analytic on a fixed neighborhood of 0.
\end{remark}

\subsection{Sectoral normalization of families of non-resonant linear differential systems}\label{subsec:BL-1.3}

An  application of Theorem~\ref{theorem:BL-2}, interesting on its own, is the problem of existence of normalizing transformations for linear differential systems 
near an unfolded non-resonant irregular singularity of Poincaré rank 1.  We will show that this problem can be reduced to a system \eqref{eq:BL-cm} of $m=n\,(n-1)$ Ricatti equations  
(where $n$ is the dimension of the system),  
providing thus a proof of a sectoral normalization theorem by Parise \cite{Pa}, Lambert and Rousseau \cite{LR}.

\smallskip

Consider a parametric family of linear systems
$\,\Delta(x,\epsilon)\,y=0$  given by
\begin{equation}\label{eq:BL-Delta}
\Delta(x,\epsilon)=(x^2\!-\epsilon)\frac{d }{dx}-A(x,\epsilon),\qquad (x,\epsilon)\in(\C\times\C,0)
\end{equation} 
where $\,y(x,\epsilon)\in\C^n$, $A(x,\epsilon)$ is analytic,
and assume that the eigenvalues $\lambda_i^{(0)}(0)$, $i=1,\ldots,n$, of the matrix $A(0,0)$ are distinct.
Let $\lambda_i(x,\epsilon)=\lambda_i^{(0)}(\epsilon)+x\lambda_i^{(1)}(\epsilon)$, $i=1,\ldots, n$, be the eigenvalues of $A(x,\epsilon)$ modulo $O(x^2\!-\epsilon)$, and define
\begin{equation}\label{eq:BL-model}
 \widehat\Delta(x,\epsilon)=(x^2\!-\epsilon)\frac{d }{dx}-\Lambda(x,\epsilon),\qquad \Lambda(x,\epsilon)=\text{Diag}(\lambda_1(x,\epsilon),\ldots,\lambda_n(x,\epsilon)),
\end{equation}
the \emph{formal normal form} for $\Delta$.
The problem we address, is to find a bounded invertible linear transformation 
$y=T(x,\sqrt\epsilon)\,u$ between the two systems $\Delta y=0$ and $\widehat\Delta u=0$.
Such $T$ is a solution of the equation
\begin{equation}\label{eq:BL-T}
(x^2\!-\epsilon)\frac{d T}{dx}=AT-T\Lambda.
\end{equation}

Note that if $V(x,\epsilon)$ is an analytic matrix of eigenvectors of $A(x,\epsilon)$ then the transformation $y=V(x,\epsilon)\,y_1$
brings the system $\Delta y=0$ to $\Delta_1 y_1=0$, whose matrix is written as $\,A_1(x,\epsilon)=\Lambda(x,\epsilon)+(x^2\!-\epsilon)R(x,\epsilon),\,$ 
with $\,R=-V^{-1}\tfrac{dV}{dx}$.
Hence we can suppose that system \eqref{eq:BL-Delta} is already in such form.
The following theorem is originally by Parise \cite{Pa}, and by 
Lambert and Rousseau \cite[Theorem 4.21]{LR}, generalizing earlier investigations 
by Zhang \cite{Zh} 
\footnote{Zhang also unfolds the Laplace integral \eqref{eq:BL-laplace}, unlike us he chooses to unfold the kernel $e^{-\frac{\xi}{x}}d\xi$ by $\left(\frac{x-\sqrt\epsilon\xi}{x+\sqrt\epsilon\xi}\right)^{\frac{1}{2\sqrt\epsilon}}d\xi=e^{-t(x,\xi^2\epsilon)\cdot\xi}d\xi$, in our notation.}
of confluence in the hypergeometric equation. 

\begin{theorem}[Parise, Lambert, Rousseau]\label{theorem:BL-linearnormalization}
Let $\Delta(x,\epsilon)$ be a non-resonant system \eqref{eq:BL-Delta} with $A(x,\epsilon)=\Lambda(x,\epsilon)+(x^2\!-\epsilon)R(x,\epsilon)$ for some analytic germ $R(x,\epsilon)$,
and let $\widehat\Delta(x,\epsilon)$ be its formal normal form \eqref{eq:BL-model}.
Then there exists  a family of  ramified ``spiraling'' domains  $X(\!\sqrt\epsilon)$, $\sqrt\epsilon\in S$, as in Theorem~\ref{theorem:BL-2}\,(i) (Figure~\ref{figure:BL-4a})
on which there exists a normalizing transformation $T(x,\sqrt\epsilon)$, solution to the equation \eqref{eq:BL-T}, 
which is uniformly continuous on 
$$X=\{(x,\sqrt\epsilon)\mid x\in X(\!\sqrt\epsilon)\}$$
and analytic on its interior, and such that $\,T(\pm\sqrt\epsilon,\sqrt\epsilon)=I+O(\!\sqrt\epsilon)\,$ is diagonal. 
This transformation $T$ on $X$ is unique modulo right multiplication by an invertible diagonal matrix constant in $x$.
\end{theorem}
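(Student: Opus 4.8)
The plan is to reduce the normalization problem for $\Delta$ to an application of Theorem~\ref{theorem:BL-2}\,(i), exactly as the paragraph preceding the statement suggests. Writing $T = I + (x^2-\epsilon)\widetilde T$ is not quite the right ansatz because $T$ need not vanish at the singular points; instead I would split $T$ into its diagonal part $T^{\mathrm{diag}}$ and off-diagonal part $T^{\mathrm{off}}$. Since $A = \Lambda + (x^2-\epsilon)R$, equation \eqref{eq:BL-T} reads $(x^2-\epsilon)\frac{dT}{dx} = \Lambda T - T\Lambda + (x^2-\epsilon)RT$. The diagonal entries of $\Lambda T - T\Lambda$ vanish, so the diagonal block decouples into a linear system $(x^2-\epsilon)\frac{dT^{\mathrm{diag}}}{dx} = (x^2-\epsilon)(RT)^{\mathrm{diag}}$, i.e. $\frac{dT^{\mathrm{diag}}}{dx} = (RT)^{\mathrm{diag}}$, which is regular and has analytic solutions; the normalization $T(\pm\sqrt\epsilon,\sqrt\epsilon) = I + O(\sqrt\epsilon)$ fixes $T^{\mathrm{diag}}$ up to a constant-in-$x$ diagonal factor. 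For the off-diagonal part, write $y = T^{\mathrm{off}}$ with entries $y_{ij}$, $i\neq j$; the $(i,j)$ entry of $\Lambda T - T\Lambda$ is $(\lambda_i - \lambda_j)y_{ij}$, and $\lambda_i(x,\epsilon) - \lambda_j(x,\epsilon) = (\lambda_i^{(0)}-\lambda_j^{(0)}) + x(\lambda_i^{(1)}-\lambda_i^{(1)})$ has invertible leading term at the origin since the eigenvalues $\lambda_i^{(0)}(0)$ are distinct. Collecting the $m = n(n-1)$ off-diagonal entries into a vector $Y$, the equation takes the form $(x^2-\epsilon)\frac{dY}{dx} = M(\epsilon)Y + f(x,Y,\epsilon)$ with $M(0)$ the invertible diagonal matrix of the $\lambda_i^{(0)}(0) - \lambda_j^{(0)}(0)$, and with $f$ of the required type: it is linear in $Y$ (so $O(\|Y\|^2)$ is vacuously absorbed), the $x\,O(\|Y\|)$ term comes from the $x(\lambda_i^{(1)}-\lambda_j^{(1)})$ corrections together with the off-diagonal contributions of $(x^2-\epsilon)RT^{\mathrm{off}}$, and the $(x^2-\epsilon)O(1)$ term comes from $(x^2-\epsilon)(RT^{\mathrm{diag}})^{\mathrm{off}}$, which is an inhomogeneous term linear in the already-determined analytic $T^{\mathrm{diag}}$.

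With the problem cast in the form \eqref{eq:BL-cm}, I would invoke Theorem~\ref{theorem:BL-2}\,(i) to produce, for each opposite pair $\{\Omega^+,\Omega^-\}$ of sectoral components of $\C \setminus \bigcup_{\mu \in \Spec M(0)} \mu\R^+$, a family of domains $Z(\sqrt\epsilon)$, $\sqrt\epsilon \in S$, and a unique bounded solution $Y(x,\sqrt\epsilon)$, uniformly continuous on $Z$, analytic on its interior, vanishing as $O(x^2-\epsilon)$ at the singular points. Reassembling $T = T^{\mathrm{diag}} + T^{\mathrm{off}}$ gives the normalizing transformation; since $T^{\mathrm{off}} = O(\sqrt\epsilon\,)$ at $x = \pm\sqrt\epsilon$ (being $O(x^2-\epsilon)$ there, evaluated on the spiraling domain this is $O(\sqrt\epsilon)$ because $x^2-\epsilon$ vanishes there — more carefully, one uses that $Z(\sqrt\epsilon)$ is adherent to $\pm\sqrt\epsilon$ and $Y$ has a limit of order $O(\sqrt\epsilon)$ there), while $T^{\mathrm{diag}}(\pm\sqrt\epsilon,\sqrt\epsilon)$ is diagonal and $I + O(\sqrt\epsilon)$, we get $T(\pm\sqrt\epsilon,\sqrt\epsilon) = I + O(\sqrt\epsilon)$ diagonal as required. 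Invertibility of $T$ on $Z$ follows because $T = I + O(\sqrt\epsilon)$ uniformly on $Z$ for $\epsilon$ small (the smallness of the domain $Z$ and the vanishing estimates make the non-identity part uniformly small), hence $\det T \neq 0$.

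For uniqueness: any normalizing transformation $T$ with $T(\pm\sqrt\epsilon,\sqrt\epsilon)$ diagonal satisfies \eqref{eq:BL-T}; its off-diagonal part solves the reduced system \eqref{eq:BL-cm}, is bounded on $Z$, and vanishes at the singular points (the latter because the off-diagonal entries of $\Lambda T - T\Lambda$ there are $(\lambda_i - \lambda_j)(T^{\mathrm{off}})_{ij}$ with $\lambda_i - \lambda_j$ invertible, forcing $T^{\mathrm{off}} = O(x^2-\epsilon)$), so the uniqueness clause of Theorem~\ref{theorem:BL-2}\,(i) pins it down. The diagonal part solves the regular linear equation $\frac{dT^{\mathrm{diag}}}{dx} = (RT)^{\mathrm{diag}}$ with the value $T^{\mathrm{diag}}(\pm\sqrt\epsilon,\sqrt\epsilon)$ free up to a constant diagonal matrix, which is precisely the stated ambiguity of right multiplication by an invertible $x$-independent diagonal matrix. (One checks that right multiplication $T \mapsto TD$ with $D$ diagonal constant preserves \eqref{eq:BL-T}, since $D$ commutes with $\Lambda$.)

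I expect the main obstacle to be the bookkeeping needed to verify that the reduced equation for $Y$ has \emph{exactly} the structural form required by Theorem~\ref{theorem:BL-2} — in particular that the inhomogeneous term is genuinely of type $(x^2-\epsilon)\,O(1)$ and does not spoil the $x\,O(\|Y\|)$ bound, and that the coupling between $T^{\mathrm{diag}}$ and $T^{\mathrm{off}}$ through the $RT$ term is triangular enough to be handled (solve for $T^{\mathrm{diag}}$ first, then feed it in as a known coefficient, then solve for $T^{\mathrm{off}}$). A secondary subtlety is checking that the spectrum of the matrix $M(0) = \mathrm{Diag}(\lambda_i^{(0)}(0) - \lambda_j^{(0)}(0))_{i\neq j}$ is such that the pair $\{\Omega^+,\Omega^-\}$ — and hence the domain $Z(\sqrt\epsilon)$ — is genuinely the one appearing in the Lambert–Rousseau statement; but this is a matter of matching conventions rather than a real difficulty, since Theorem~\ref{theorem:BL-2}\,(i) applies to an arbitrary invertible $M(0)$ and any opposite pair of sectoral components.
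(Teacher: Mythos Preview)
Your additive splitting $T = T^{\mathrm{diag}} + T^{\mathrm{off}}$ does \emph{not} yield a triangular system. Writing out the $(i,i)$-entry of \eqref{eq:BL-T} with $A = \Lambda + (x^2-\epsilon)R$ gives
\[
\frac{dT_{ii}}{dx} = \sum_{k} r_{ik}\,T_{ki},
\]
which involves the off-diagonal entries $T_{ki}$, $k\neq i$, whenever $R$ is not lower-triangular. So you cannot ``solve for $T^{\mathrm{diag}}$ first, then feed it in as a known coefficient''; the coupling goes both ways. Nor can you apply Theorem~\ref{theorem:BL-2} to the full $n^2$-dimensional linear system for all the $T_{ij}$, since the diagonal entries contribute zero eigenvalues to the linearization matrix $M(0)$, which must be invertible.

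The paper circumvents this with a \emph{multiplicative} factorization $T = (I+U)\,T_D$, where $U$ has zero diagonal and $T_D$ is diagonal. One demands that $(I+U)^{-1}$ take the system to one with matrix $\Lambda + (x^2-\epsilon)D$, $D$ diagonal; the equation for $U$ then reads
\[
(x^2-\epsilon)\frac{dU}{dx} = \Lambda U - U\Lambda + (x^2-\epsilon)\bigl(R(I+U) - (I+U)D\bigr),
\]
and since $D$ must be the diagonal of $R(I+U)$, substituting this back closes the system in $U$ alone: one gets $n(n-1)$ Riccati equations for the $u_{ij}$, $i\neq j$, with invertible $M(0) = \mathrm{Diag}(\lambda_i^{(0)}(0)-\lambda_j^{(0)}(0))$, to which Theorem~\ref{theorem:BL-2} applies directly. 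Only afterwards, with $U$ (hence $D$) in hand, is $T_D$ recovered by the regular integration $T_D = \exp\int_{\sqrt\epsilon}^x D\,ds$. The price of closing the system is that it becomes quadratic rather than linear---but Theorem~\ref{theorem:BL-2} is stated for nonlinear $f$, so this is free.
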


\begin{proof}
 Write $T(x,\sqrt\epsilon)=\big(I+U(x,\sqrt\epsilon)\big)\cdot T_D(x,\sqrt\epsilon)$, where $T_D(x,\sqrt\epsilon)$ is the diagonal of $T$, and
the matrix $U(x,\sqrt\epsilon)=O(x^2\!-\epsilon)$  has zeros on the diagonal.
We search for $U(x,\sqrt\epsilon)$, such that
$\,y_D=\big(I+U(x,\sqrt\epsilon)\big)^{-1}y\,$ satisfies
$$(x^2\!-\epsilon)\frac{dy_D}{dx}-\Big(\Lambda(x,\epsilon)+(x^2\!-\epsilon)D(x,\sqrt\epsilon)\Big)y_D=0,$$
for some diagonal matrix $D(x,\sqrt\epsilon)$,
and  set
$$T_D(x,\sqrt\epsilon)=e^{\int_{\sqrt\epsilon}^x D(s,\sqrt\epsilon)\,ds}.$$
The matrix $U(x,\sqrt\epsilon)$ is solution to
$$(x^2\!-\epsilon)\frac{dU}{dx}=\Lambda U-U\Lambda+(x^2\!-\epsilon)\Big(R\,(I+U)-(I+U)D\Big),$$
where one must set $D$ to be equal to the diagonal of $R\,(I+U)$.
Therefore the off-diagonal terms of  $U=(u_{ij})_{i,j=1}^n$ are solution to the system of $n(n-1)$ equations
$$(x^2\!-\epsilon)\frac{du_{ij}}{dx}=(\lambda_i-\lambda_j)u_{ij}+(x^2\!-\epsilon)\Big(r_{ij}+\sum_{k\neq j}r_{ik}u_{kj}-u_{ij}r_{jj}-u_{ij}\sum_{k\neq j}r_{jk}u_{kj}\Big),$$
and one can apply Theorem~\ref{theorem:BL-2}.
\qed\end{proof}

\subsection{Remark on generalization to singularities of greater multiplicities.}	
Saddle-node singularities of codimension $k$ (multiplicity $k+1$) unfold generically as
\begin{equation}\label{eq:BL-gcm}
(x^{k+1}+\epsilon_{k-1}x^{k-1}+\ldots+\epsilon_0)\frac{dy}{dx}=My+f(x,\epsilon,y),\qquad
(x,\epsilon,y)\in\C\times\C^k\times\C^m.
\end{equation}
The case of dimension $m=1$ was studied in \cite{RT}; their construction of the center-manifold should probably generalize also to the case $m\geq 1$ with $M$ having spectrum of Poincaré type.
The non-Poincaré situation is hinted in \cite{HLR} where a generalization of Theorem~\ref{theorem:BL-linearnormalization} on sectoral normalization of unfolded irregular singularities of linear systems is given.
As in Remark~\ref{remark:BL-X}, the domains constructed in \cite{HLR} are linked to the real phase space of the complex vector fields 
$e^{i(\frac{\pi}{2}-\alpha)}(x^{k+1}+\epsilon_{k-1}x^{k-1}+\ldots+\epsilon_0)\frac{\partial}{\partial x}$
(cf. \cite{BD}).

Theorem~\ref{theorem:BL-3} on summability of the unique formal power series solution in $(x,\epsilon)$ seems for some reason to be rather particular to the codimension $k=1$ (multiplicity 2).
Already in the case of \eqref{eq:BL-gcm} with the derivation on the left side $(x^{k+1}\!-\epsilon_0)\frac{\partial}{\partial x}$, the sectors in $\epsilon_0$-space for the domains constructed in \cite{HLR} are only of opening $>\frac{2\pi}{k}$, while one would rather want them $>\frac{(k+1)\pi}{k}$ 
in order to correspond with the expected Gevrey order $(\frac{1}{k},\frac{k+1}{k})$ of the formal solution in $(x,\epsilon_0)$.

Let us note however, that the special case of \eqref{eq:BL-gcm} with the derivation $x(x^{k}\!-\epsilon_1)\frac{\partial}{\partial x}$,
which is invariant with respect to the rotation $x\mapsto e^{\frac{2\pi i}{k}}x$, can be reduced to
$\tilde x(\tilde x-\epsilon_1)\frac{\partial}{\partial\tilde x}$ by the ramification $\tilde x=x^{k}$ and a rank reduction
(cf. \cite{CMS}), therefore it does not encounter the above mentioned issue.


\section{Preliminaries on Fourier--Laplace transformations}\label{sec:BL-2}

We will recall some basic elements of the classical theory of Fourier--Laplace transformations on a line in the complex plane. 
The book \cite{D} can serve as a good reference.

For $\alpha\in\R$ and a locally integrable function $\phi:e^{i\alpha}\R\to\C$, one defines its \emph{two-sided Laplace transform} 
\begin{equation}\label{eq:BL-Laplacealpha}
\Cal L_\alpha[\phi](t)=\int_{-\infty e^{i\alpha}}^{+\infty e^{i\alpha}}\phi(\xi)\,e^{-t\xi}\,d\xi
\end{equation}
whenever it exists.
Later on, in Section~\ref{sec:BL-3}, we will replace the variable $t$ by the time variable $t(x,\epsilon)$ \eqref{eq:BL-t} of the vector field \eqref{eq:BL-vectfield}.

\begin{definition}
 For $A<B\in\R$, let us introduce the two following exponential weighted norms on locally integrable functions  $\phi:e^{i\alpha}\R\to\C$:
\begin{align*}
 |\phi|_{e^{i\alpha}\R}^{A,B} &=\sup_{s\in\R} |\phi(e^{i\alpha}s)|\cdot\left(|e^{-As}|+|e^{-Bs}|\right), \\
 \|\phi\|_{e^{i\alpha}\R}^{A,B} &=\int_{s\in\R} |\phi(e^{i\alpha}s)|\cdot \left(|e^{-As}|+|e^{-Bs}|\right)\,ds.
\end{align*}
\end{definition}

\begin{proposition}
 If $\|\phi\|_{e^{i\alpha}\R}^{A,B}<+\infty$, then the Laplace transform $\Cal L_\alpha[\phi](t)$ converges absolutely and is analytic for $t$ in the closed strip
$$\widebar T_\alpha^{A,B}:=\{t\in\C\mid A \leq \Re(e^{i\alpha}t) \leq B\}.$$
Moreover, $\Cal L_\alpha[\phi](t)$ tends uniformly to 0 as $t\to\infty$ in $\widebar T_\alpha^{A,B}$.
\end{proposition}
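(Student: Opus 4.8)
The plan is to reduce the statement to the classical Lebesgue theory on $\R$ by parametrizing the line $e^{i\alpha}\R$ and using the hypothesis $\|\phi\|_{e^{i\alpha}\R}^{A,B}<+\infty$ as an integrable majorant. First I would substitute $\xi=e^{i\alpha}s$ with $s\in\R$, so that $d\xi=e^{i\alpha}\,ds$ and $\Cal L_\alpha[\phi](t)=e^{i\alpha}\int_{-\infty}^{+\infty}\phi(e^{i\alpha}s)\,e^{-t e^{i\alpha}s}\,ds$. For $t\in\widebar T_\alpha^{A,B}$ write $\zeta=e^{i\alpha}t$, so $\Re(e^{i\alpha}A)\le\Re\zeta\le\Re(e^{i\alpha}B)$; then $|e^{-t e^{i\alpha}s}|=e^{-s\,\Re\zeta}$. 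The elementary convexity estimate $e^{-s\,\Re\zeta}\le e^{-s\,\Re(e^{i\alpha}A)}+e^{-s\,\Re(e^{i\alpha}B)}$, valid for every real $s$ (the left side is dominated by whichever of the two exponentials on the right is larger, according to the sign of $s$), shows that $|\phi(e^{i\alpha}s)\,e^{-t e^{i\alpha}s}|\le |\phi(e^{i\alpha}s)|\big(|e^{-A e^{i\alpha}s}|+|e^{-B e^{i\alpha}s}|\big)$, whose integral over $s\in\R$ is exactly $\|\phi\|_{e^{i\alpha}\R}^{A,B}<+\infty$. This gives absolute convergence of $\Cal L_\alpha[\phi](t)$ uniformly for $t\in\widebar T_\alpha^{A,B}$.

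Next I would prove analyticity on the interior of the strip. The integrand $t\mapsto\phi(e^{i\alpha}s)e^{-t e^{i\alpha}s}$ is entire in $t$ for each fixed $s$, and on any compact subset $K$ of the interior of $\widebar T_\alpha^{A,B}$ one has $\Re(e^{i\alpha}A)<\Re\zeta<\Re(e^{i\alpha}B)$ uniformly, so the same majorant $|\phi(e^{i\alpha}s)|(|e^{-A e^{i\alpha}s}|+|e^{-B e^{i\alpha}s}|)\in L^1(\R)$ dominates the integrand for all $t\in K$. By Morera's theorem combined with Fubini (or directly by the standard theorem on analyticity of parameter integrals — differentiation under the integral sign, justified by the locally integrable majorant $|s|\cdot|\phi(e^{i\alpha}s)|(\ldots)$, which is finite near the interior but may fail on the boundary), the function $\Cal L_\alpha[\phi]$ is holomorphic on the open strip. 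Continuity up to the closed strip follows from dominated convergence with the $s$-independent $L^1$ majorant above.

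Finally, for the decay as $t\to\infty$ within $\widebar T_\alpha^{A,B}$: this is a Riemann--Lebesgue type statement in the "imaginary" direction together with decay of the exponential in the "real" direction. Writing $\zeta=e^{i\alpha}t=\sigma+i\tau$ with $\sigma\in[\Re(e^{i\alpha}A),\Re(e^{i\alpha}B)]$, we have $\Cal L_\alpha[\phi](t)=e^{i\alpha}\int_{\R}g(s)\,e^{-i\tau s}\,ds$ where $g(s)=\phi(e^{i\alpha}s)e^{-\sigma s}$; since $|g(s)|$ is bounded by the fixed $L^1$ majorant uniformly in $\sigma$, the family $\{g_\sigma\}$ is uniformly integrable, and the Riemann--Lebesgue lemma applied uniformly over this equi-integrable family gives $\int g_\sigma(s)e^{-i\tau s}\,ds\to0$ as $|\tau|\to\infty$ uniformly in $\sigma$. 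For the regime $|\tau|$ bounded but $|\sigma|\to$ (one of the) endpoints, one splits the integral over $|s|\le R$ and $|s|>R$: the tail is small uniformly by integrability of the majorant, and on the compact part $e^{-\sigma s}$ stays bounded while $g$ is continuous, so no issue arises — actually here I should be careful: $t\to\infty$ in the closed strip forces $|\tau|\to\infty$ since $\sigma$ stays bounded, so only the Riemann--Lebesgue regime occurs, and the argument simplifies. The main obstacle is making the Riemann--Lebesgue step uniform over the one-parameter family of weights $e^{-\sigma s}$ (equivalently, uniform in the real part of $t$), which I would handle by a standard $\varepsilon/2$ truncation: approximate the common majorant's worst case by a compactly supported continuous function within $\varepsilon/2$ in $L^1$, then use classical Riemann--Lebesgue on the smooth piece.
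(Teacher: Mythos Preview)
Your proof is correct and follows essentially the same approach as the paper: the paper splits the integral at $\xi=0$ to see that each one-sided piece converges absolutely on the appropriate closed half-plane (which amounts to your inequality $e^{-s\sigma}\le e^{-s\Re(e^{i\alpha}A)}+e^{-s\Re(e^{i\alpha}B)}$ according to the sign of $s$), and for the uniform decay at infinity it simply cites Doetsch, Theorem~23.6, whereas you supply the Riemann--Lebesgue argument explicitly. The only extra content you provide is the self-contained uniformity argument via $L^1$-compactness/truncation, which is fine.
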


\begin{proof}
The integral $\int_{-\infty e^{i\alpha}}^{0}\phi(\xi)\,e^{-t\xi}\,d\xi$ converges absolutely in the closed half-plane $\Re(e^{i\alpha}t)\leq B$,
while the integral $\int_{0}^{+\infty e^{i\alpha}}\phi(\xi)\,e^{-t\xi}\,d\xi$ converges absolutely in the closed half-plane $\Re(e^{i\alpha}t)\geq A$.
For the second statement see \cite{D}, Theorem 23.6.
\qed\end{proof}

\begin{lemma}\label{lemma:BL-1}
If $0< D < \tfrac{B-A}{2}$, then for any function  $\phi:e^{i\alpha}\R\to\C$,
$$\|\phi\|_{e^{i\alpha}\R}^{A+D,B-D}\leq\tfrac{4}{D}\,|\phi|_{e^{i\alpha}\R}^{A,B}.$$
\end{lemma}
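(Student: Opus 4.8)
The inequality to prove is
$$\|\phi\|_{e^{i\alpha}\R}^{A+D,B-D}\leq\tfrac{4}{\Re(e^{-i\alpha} D)}\,|\phi|_{e^{i\alpha}\R}^{A,B}.$$
The natural strategy is to parametrize the line of integration by $\xi=e^{i\alpha}s$ with $s\in\R$, so that $d\xi\cdot e^{-i\alpha}=ds$, and to bound the integrand pointwise by the supremum norm times a decaying exponential factor that is integrable in $s$. First I would write, for $\xi=e^{i\alpha}s$,
$$|\phi(\xi)|\cdot\big(|e^{-(A+D)\xi}|+|e^{-(B-D)\xi}|\big)\leq |\phi|_{e^{i\alpha}\R}^{A,B}\cdot\frac{|e^{-(A+D)\xi}|+|e^{-(B-D)\xi}|}{|e^{-A\xi}|+|e^{-B\xi}|},$$
using the definition of $|\phi|_{e^{i\alpha}\R}^{A,B}$ to replace $|\phi(\xi)|$.

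The key point is then to estimate the ratio
$$\rho(s):=\frac{|e^{-(A+D)\xi}|+|e^{-(B-D)\xi}|}{|e^{-A\xi}|+|e^{-B\xi}|}\qquad(\xi=e^{i\alpha}s).$$
Writing $a=\Re(e^{i\alpha}A)$, $b=\Re(e^{i\alpha}B)$, $d=\Re(e^{i\alpha}D)>0$, one has $|e^{-A\xi}|=e^{-as}$, $|e^{-B\xi}|=e^{-bs}$, $|e^{-(A+D)\xi}|=e^{-(a+d)s}$, $|e^{-(B-D)\xi}|=e^{-(b-d)s}$, and the hypothesis $0<d<\tfrac12(b-a)$ in particular gives $a<b$. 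I would split $\R$ into the regions $s\geq 0$ and $s\leq 0$. For $s\geq 0$ the dominant term in the denominator is $e^{-as}$ (since $a<b$), and in the numerator the dominant term is $e^{-(b-d)s}$ provided $b-d\geq a+d$, i.e. $d\leq\tfrac12(b-a)$, which holds; hence on $s\geq 0$ one can bound $\rho(s)\leq 2\,e^{-(b-d)s}/e^{-as}=2\,e^{-(b-d-a)s}$, and note $b-d-a\geq d>0$ by the hypothesis, so $\rho(s)\leq 2e^{-ds}$ there. Symmetrically, for $s\leq 0$ the dominant denominator term is $e^{-bs}$ and numerator term is $e^{-(a+d)s}$, giving $\rho(s)\leq 2e^{-(a+d-b)s}=2e^{d s}$ (note $s\le 0$, and $a+d-b\le -d<0$), i.e. $\rho(s)\le 2e^{d|s|}$... wait, sign: for $s\le 0$, $a+d-b<0$ so $e^{(a+d-b)s}=e^{(b-a-d)|s|}$, which grows — so I must instead bound by the OTHER numerator term: on $s\le 0$ use $e^{-(b-d)s}$ is the largest numerator term? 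No: for $s\le 0$, larger exponent in the sense of $e^{-(\cdot)s}$ means smaller coefficient. Let me just say: on $s\le 0$ I bound numerator by $2e^{-(a+d)s}$... hmm, $e^{-(a+d)s}$ vs $e^{-(b-d)s}$ with $s\le 0$: since $a+d<b-d$, $-(a+d)>-(b-d)$, and with $s\le 0$, $-(a+d)s\le -(b-d)s$? $s\le0$ flips: $-(a+d)s \ge -(b-d)s$ iff $-(a+d)\le-(b-d)$ iff $a+d\ge b-d$, false. So on $s\le0$ the larger numerator term is $e^{-(b-d)s}$. Then $\rho(s)\le 2e^{-(b-d)s}/e^{-bs}=2e^{ds}$ for $s\le 0$... wait that's fine since $s\le0$ gives $e^{ds}\le1$. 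Hmm, but denominator on $s\le0$: larger of $e^{-as},e^{-bs}$ is $e^{-bs}$ since $-b<-a$... $-bs\ge-as$ for $s\le0$ iff $-b\le-a$ iff $b\ge a$, true. So denominator $\ge e^{-bs}$, giving $\rho(s)\le 2e^{-(b-d)s}/e^{-bs}=2e^{ds}\le 2$ on $s\le 0$. But I need an integrable bound, so I should keep the decaying factor: actually $2e^{ds}$ on $s\le0$ IS integrable. Good. So overall $\rho(s)\le 2e^{-d|s|}$.

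Given this, I would conclude
$$\|\phi\|_{e^{i\alpha}\R}^{A+D,B-D}=\int_{-\infty}^{+\infty}|\phi(e^{i\alpha}s)|\big(|e^{-(A+D)\xi}|+|e^{-(B-D)\xi}|\big)\,ds\leq |\phi|_{e^{i\alpha}\R}^{A,B}\int_{-\infty}^{+\infty}2e^{-d|s|}\,ds=|\phi|_{e^{i\alpha}\R}^{A,B}\cdot\frac{4}{d},$$
and since $d=\Re(e^{i\alpha}D)$ — here I note the statement writes $\Re(e^{-i\alpha}D)$ in the denominator, which under the convention $d\xi\cdot e^{-i\alpha}$ in the definition of $\|\cdot\|$ is the correct quantity appearing after the substitution $\xi = e^{i\alpha}s$; I would simply track the factor $e^{\pm i\alpha}$ carefully through the change of variables and match it to the stated form — this gives exactly the claimed bound. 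The only genuinely delicate bookkeeping, and the step most likely to produce sign errors, is the case analysis establishing $\rho(s)\le 2e^{-d|s|}$: one must correctly identify, in each half-line, which exponential dominates in numerator and denominator, and verify that the surviving exponent has the right sign, which is precisely where the hypothesis $0<\Re(e^{i\alpha}D)<\tfrac12\Re(e^{i\alpha}(B-A))$ gets used. Everything else is a routine integral.
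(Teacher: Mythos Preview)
Your approach is essentially the paper's: split the line into $s\ge 0$ and $s\le 0$, bound the ratio $\rho(s)$ by $2e^{-d|s|}$, and integrate. The paper does the same thing with a slightly different algebraic packaging (factoring out $|e^{D\xi}|$ and then using $|e^{-(A+2D)\xi}|\le|e^{-B\xi}|$ on the negative half-line), but the content is identical, including the constant $4/d$ with $d=\Re(e^{i\alpha}D)$ (the $e^{-i\alpha}$ in the displayed statement is a typo; the paper's own proof has $e^{i\alpha}$).

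There is one genuine slip in your $s\ge 0$ case. You claim the dominant numerator term is $e^{-(b-d)s}$ and deduce $\rho(s)\le 2e^{-(b-a-d)s}$. In fact for $s\ge 0$ and $a+d<b-d$ one has $e^{-(a+d)s}\ge e^{-(b-d)s}$, so the dominant term is $e^{-(a+d)s}$; your intermediate inequality $\rho(s)\le 2e^{-(b-a-d)s}$ is actually false for large $s$ (the left side behaves like $e^{-ds}$, which decays more slowly when $b-a-d>d$). The correct step is simply
\[
\rho(s)\ \le\ \frac{2e^{-(a+d)s}}{e^{-as}}\ =\ 2e^{-ds}\qquad(s\ge 0),
\]
i.e.\ exactly the mirror of what you (correctly, after your self-correction) do for $s\le 0$. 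With that fix your final bound $\rho(s)\le 2e^{-d|s|}$ and the conclusion stand.
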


\begin{proof}
\begin{equation*}
\begin{split}
\int_{-\infty}^{0} |\phi(e^{i\alpha}s)|&\left(|e^{-(A+D)s}|+|e^{-(B-D)s}|\right)\,ds \\
&\leq \int_{-\infty}^{0} |e^{D s}|\, d s \cdot \sup_{s\in\R} |\phi(e^{i\alpha}s)|\left(|e^{-As-2Ds}|+|e^{-Bs}|\right) \\
&\leq \tfrac{1}{D}\cdot 2|\phi|_{e^{i\alpha}\R}^{A,B},
\end{split}
\end{equation*}
since $|e^{-As-2Ds}|\leq |e^{-Bs}|\leq |e^{-As}|+|e^{-Bs}|,\,$ for $s<0$.
The same kind of estimate is obtained also for $\int_0^{+\infty}$. 
\qed\end{proof}

\begin{corollary}\label{corollary:BL-1}
If $|\phi|_{e^{i\alpha}\R}^{A,B}<+\infty$, then the Laplace transform $\Cal L_\alpha[\phi](t)$ converges absolutely and is analytic for $t$ in the open strip
$$T_\alpha^{A,B}:=\{t\in\C\mid A < \Re(e^{i\alpha}t) < B \}.$$
Moreover, $\Cal L_\alpha[\phi](t)$ tends to 0 as $t\to\infty$ uniformly in each $\widebar T_\alpha^{A_1,B_1}\subseteq T_\alpha^{A,B}$.
\end{corollary}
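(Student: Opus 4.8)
The plan is to deduce Corollary~\ref{corollary:BL-1} from the preceding Proposition by a routine "shrink the strip" argument, using Lemma~\ref{lemma:BL-1} to pass from the supremum-norm hypothesis to the integral-norm hypothesis required by the Proposition. First I would fix $A,B$ with $\Re(e^{i\alpha}A)<\Re(e^{i\alpha}B)$ and assume $|\phi|_{e^{i\alpha}\R}^{A,B}<+\infty$. Then I would pick an arbitrary $D\in\C$ with $0<\Re(e^{i\alpha}D)<\tfrac12\Re(e^{i\alpha}(B-A))$; by Lemma~\ref{lemma:BL-1} this gives $\|\phi\|_{e^{i\alpha}\R}^{A+D,B-D}\leq\tfrac{4}{\Re(e^{-i\alpha}D)}\,|\phi|_{e^{i\alpha}\R}^{A,B}<+\infty$, so the Proposition applies with $A,B$ replaced by $A+D,B-D$: the Laplace transform $\Cal L_\alpha[\phi](t)$ converges absolutely and is analytic on the closed strip $\widebar T_\alpha^{A+D,B-D}$, and tends uniformly to $0$ as $t\to\infty$ there.

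Next I would take the union over all admissible $D$. A point $t$ lies in the open strip $T_\alpha^{A,B}$, i.e. $\Re(e^{i\alpha}A)<\Re(e^{i\alpha}t)<\Re(e^{i\alpha}B)$, precisely when there is some $D$ (one can even take $D$ a positive real multiple of $e^{-i\alpha}$, so that $\Re(e^{i\alpha}D)=|D|>0$) with $\Re(e^{i\alpha}(A+D))\leq\Re(e^{i\alpha}t)\leq\Re(e^{i\alpha}(B-D))$ and $\Re(e^{i\alpha}D)<\tfrac12\Re(e^{i\alpha}(B-A))$; concretely choose $\Re(e^{i\alpha}D)=\min\{\Re(e^{i\alpha}t)-\Re(e^{i\alpha}A),\ \Re(e^{i\alpha}B)-\Re(e^{i\alpha}t)\}$, shrunk slightly if necessary to stay strictly below $\tfrac12\Re(e^{i\alpha}(B-A))$. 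Hence $T_\alpha^{A,B}=\bigcup_D \widebar T_\alpha^{A+D,B-D}$, and since convergence and analyticity are local properties, $\Cal L_\alpha[\phi]$ converges absolutely and is analytic on all of $T_\alpha^{A,B}$.

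Finally, for the decay statement, let $\widebar T_\alpha^{A_1,B_1}\subseteq T_\alpha^{A,B}$, so $\Re(e^{i\alpha}A)<\Re(e^{i\alpha}A_1)\leq\Re(e^{i\alpha}B_1)<\Re(e^{i\alpha}B)$. Choosing $D$ with $\Re(e^{i\alpha}D)\leq\min\{\Re(e^{i\alpha}(A_1-A)),\ \Re(e^{i\alpha}(B-B_1))\}$ (again shrunk to respect the Lemma's bound) we get $\widebar T_\alpha^{A_1,B_1}\subseteq\widebar T_\alpha^{A+D,B-D}$, and the uniform decay on the larger strip furnished by the Proposition restricts to the required uniform decay on $\widebar T_\alpha^{A_1,B_1}$. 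This completes the proof.

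I do not expect any genuine obstacle here — the only point requiring a little care is the bookkeeping with the linear functional $t\mapsto\Re(e^{i\alpha}t)$ and with the constraint $\Re(e^{i\alpha}D)<\tfrac12\Re(e^{i\alpha}(B-A))$ from Lemma~\ref{lemma:BL-1}, to make sure the chosen $D$ is always admissible; taking $D$ along the ray $e^{-i\alpha}\R^+$ makes $\Re(e^{i\alpha}D)$ into an ordinary positive real parameter and renders this transparent.
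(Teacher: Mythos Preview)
Your proposal is correct and is exactly the argument the paper intends: the Corollary is stated without proof precisely because it follows from the preceding Proposition combined with Lemma~\ref{lemma:BL-1} via the ``shrink the strip'' reduction you describe. Your write-up simply makes explicit the details the paper leaves to the reader.
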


\begin{definition}\label{definition:BL-2}
The \emph{Borel transformation} is defined for any function $f$ analytic on some open strip $T_\alpha^{A,B}$, that vanishes at infinity uniformly in each closed
substrip $\widebar T_\alpha^{A_1,B_1}\!\subseteq T_\alpha^{A,B}$, by
\begin{equation}\label{eq:BL-B}
 \Cal B_\alpha [f](\xi)= \tfrac{1}{2\pi i} \, V.P.\!\int_{C-\infty ie^{-i\alpha}}^{C+\infty ie^{-i\alpha}} \!f(t)\,e^{t\xi}\,dt, \qquad\text{for }\ \xi\in e^{i\alpha}\R,
\end{equation}
where $\,V.P.\!\int_{C-\infty ie^{-i\alpha}}^{C+\infty ie^{-i\alpha}}$ stands for the ``Cauchy principal value'' 
$\!\!{\displaystyle\lim_{N\to+\infty}}\!\int_{C-Nie^{-i\alpha}}^{C+Nie^{-i\alpha}}\!$
and $C\in T_\alpha^{A,B}$.
\end{definition}

The two-sided Laplace transformation \eqref{eq:BL-Laplacealpha} and the Borel transformation \eqref{eq:BL-B} of analytic functions are inverse
one to the other when defined.
We will only need the following particular statement.

\begin{theorem} \label{theorem:BL-1}~

\smallskip\noindent
1)  Let $\,f\in\Cal O(T_\alpha^{A,B})\,$ be absolutely integrable on each line $C\!+\! ie^{-i\alpha}\R\subseteq T_\alpha^{A,B}$ and
vanishing at infinity uniformly in each closed sub-strip of $T_\alpha^{A,B}$.
Then the Borel transform
 $\phi(\xi)=\Cal B_\alpha[f](\xi)$ is absolutely convergent and continuous for all $\xi\in e^{i\alpha}\R$,
$$|\phi|_{e^{i\alpha}\R}^{A_1,B_1} \leq \tfrac{1}{2\pi}\!\! \displaystyle\sup_{A_1\leq C\leq B_1} \displaystyle\int_{\tau\in C+\R}\!\! |f(ie^{-i\alpha}\tau)|\,d\tau \quad\text{for}\quad A<A_1<B_1<B,
$$ 
and  $\,f(t)=\Cal L_\alpha[\phi](t)\,$ for all $t\in T_\alpha^{A,B}$.

\smallskip\noindent
2) Let $f$ be as in 1) with  $B=B_1=+\infty$, the strips being replaced by half-planes.
Then the Borel transform $\phi(\xi)=\Cal B_\alpha[f](\xi)$ is absolutely convergent and continuous on $e^{i\alpha}\R$, and
$\phi(\xi)=0$ for $\xi\in\,]\!-\!\infty e^{i\alpha},0[$,
$$|\phi|_{e^{i\alpha}\R}^{A_1,+\infty}:=\sup_{s\in\R^+}|\phi(e^{i\alpha}s)\,e^{-A_1 s}| 
\,\leq\, \tfrac{1}{2\pi} \sup_{C\geq A_1} \int_{\tau\in C+\R} |f(ie^{-i\alpha}\tau)|\,d\tau,$$
and
$$f(t)=\Cal L_\alpha[\phi\,](t)=\int_{0}^{+\infty e^{i\alpha}}\!\!\!\! \phi(\xi)\,e^{-t\xi}\,d\xi$$
is the one-sided Laplace transform of $\phi$ in the direction $\alpha$.
\end{theorem}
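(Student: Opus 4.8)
The plan is to prove both parts of Theorem~\ref{theorem:BL-1} by a direct Fubini-type argument, treating part 2) as the limiting case $B\to+\infty e^{-i\alpha}$ of part 1). First I would establish part 1). Fix a closed substrip $\widebar T_\alpha^{A_1,B_1}\subseteq T_\alpha^{A,B}$ and $\xi\in e^{i\alpha}\R$. The $V.P.$-integral defining $\widetilde f(\xi)$ over the line $C+ie^{-i\alpha}\R$ is actually absolutely convergent: writing $t=C+ie^{-i\alpha}s$ with $s\in\R$, one has $|e^{t\xi}|=e^{\Re(C\xi)}$ constant in $s$ (since $\xi\in e^{i\alpha}\R$ and $ie^{-i\alpha}\xi\in i\R$), so $\int|f(t)e^{t\xi}|\,|dt| = e^{\Re(C\xi)}\int_{C+ie^{-i\alpha}\R}|f(t)|\,|dt|<\infty$ by hypothesis; this also gives the stated bound on $|\widetilde f|_{e^{i\alpha}\R}^{A_1,B_1}$ after choosing $C$ on the appropriate boundary line of $\widebar T_\alpha^{A_1,B_1}$ depending on the sign of $\Re(e^{i\alpha}\xi)$ (and noting the factor $|e^{-A_1\xi}|+|e^{-B_1\xi}|$ is controlled by $e^{\Re(C\xi)}$ for $C$ in the strip). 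Continuity of $\widetilde f$ in $\xi$ follows from dominated convergence, the dominating function being obtained by picking $C$ slightly inside the strip on both sides.

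Next, for the inversion formula $f(t)=\Cal L_\alpha[\widetilde f\,](t)$, I would start from the Fourier inversion theorem on the line. Fix $t_0\in T_\alpha^{A,B}$ and pick $C_1,C_2\in T_\alpha^{A,B}$ real-linearly flanking $\Re(e^{i\alpha}t_0)$, i.e. with $\Re(e^{i\alpha}C_1)<\Re(e^{i\alpha}t_0)<\Re(e^{i\alpha}C_2)$. The function $s\mapsto f(C_j+ie^{-i\alpha}s)$ is in $L^1(\R)$ by hypothesis, and its Fourier transform is, up to normalization and the substitution $\xi\in e^{i\alpha}\R$, exactly $\widetilde f$ restricted to the two half-rays; Fourier inversion on each half-line then reconstructs $f$ on the lines $C_j+ie^{-i\alpha}\R$, and a contour-shift (Cauchy's theorem, using that $f$ vanishes at infinity uniformly in the closed substrip between $C_1$ and $C_2$, so the horizontal sides of the rectangle contribute nothing) moves the value from $C_j$ to $t_0$. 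Assembling the two half-line contributions gives $f(t_0)=\int_{-\infty e^{i\alpha}}^{+\infty e^{i\alpha}}\widetilde f(\xi)e^{-t_0\xi}\,d\xi=\Cal L_\alpha[\widetilde f\,](t_0)$. The absolute convergence of this last integral is guaranteed by the bound on $|\widetilde f|_{e^{i\alpha}\R}^{A_1,B_1}$ together with Corollary~\ref{corollary:BL-1}.

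For part 2), I would observe that when $B=+\infty e^{-i\alpha}$ the strip becomes the half-plane $\{\Re(e^{i\alpha}t)>\Re(e^{i\alpha}A)\}$. The new point is that $\widetilde f(\xi)=0$ for $\xi\in(-\infty e^{i\alpha},0)$: in the $V.P.$-integral $\tfrac1{2\pi i}\int_{C-\infty ie^{-i\alpha}}^{C+\infty ie^{-i\alpha}}f(t)e^{t\xi}\,dt$, for such $\xi$ one has $|e^{t\xi}|=e^{\Re(t\xi)}\to 0$ as $\Re(e^{i\alpha}t)\to+\infty$, so one can close the contour to the right (in the direction of increasing $\Re(e^{i\alpha}t)$) by a large half-circle; since $f$ is holomorphic on the whole half-plane and vanishes at infinity uniformly on closed substrips, the half-circle contribution vanishes and the residue is $0$, hence $\widetilde f(\xi)=0$. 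Consequently the two-sided Laplace transform collapses to the one-sided one $\int_0^{+\infty e^{i\alpha}}\widetilde f(\xi)e^{-t\xi}\,d\xi$, and the norm identity $|\widetilde f|_{e^{i\alpha}\R}^{A_1,+\infty e^{-i\alpha}}=\sup_{\xi\in(0,+\infty e^{i\alpha})}|\widetilde f(\xi)e^{-A_1\xi}|$ with its bound is read off from part 1) by letting $B_1\to+\infty e^{-i\alpha}$.

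The main obstacle is the interchange-of-limits/contour-shift step in part 1): one must justify that the value of the $V.P.$-integral over $C+ie^{-i\alpha}\R$ is independent of $C$ within the strip (this is where uniform vanishing at infinity on closed substrips is essential, to kill the horizontal edges of the shifting rectangle) and that the Fourier inversion applies despite $\widetilde f$ being only continuous and of controlled growth rather than integrable — the resolution is to split at $\xi=0$ and work on each half-ray separately, where $\widetilde f(\xi)e^{-t_0\xi}$ decays exponentially for $t_0$ strictly inside the strip, so the inverse-Fourier integral converges absolutely and the classical pointwise inversion theorem (e.g. \cite{D}) applies after the contour has been moved to the line through $t_0$. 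Everything else is bookkeeping with the substitution $t=C+ie^{-i\alpha}s$ and the elementary estimates already packaged in Lemma~\ref{lemma:BL-1} and Corollary~\ref{corollary:BL-1}.
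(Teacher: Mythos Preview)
The paper does not actually prove this theorem: its entire proof reads ``See \cite{D}, Theorems 28.1 and 28.2.'' Your direct argument is essentially the standard one found there, and it is correct. The key steps are exactly as you identify: (a) absolute convergence and the norm bound follow from the observation that $|e^{t\xi}|=e^{r\,\Re(e^{i\alpha}C)}$ is constant along each line $C+ie^{-i\alpha}\R$ when $\xi=e^{i\alpha}r$; (b) contour-independence follows from Cauchy's theorem together with the uniform vanishing of $f$ at infinity on closed substrips; (c) for the inversion formula one splits the $\xi$-integral at $0$, uses lines $C_1,C_2$ on either side of $t_0$ so that Fubini applies on each half-ray, evaluates the inner integral $\int_0^{\pm\infty e^{i\alpha}}e^{(t-t_0)\xi}\,d\xi$ as a Cauchy kernel $\mp\tfrac{1}{t-t_0}$, and recognises the resulting contour integral over the boundary of the rectangle between $C_1$ and $C_2$ as $f(t_0)$.

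One imprecision worth flagging in part 2): closing the contour ``by a large half-circle'' is not justified by the stated hypotheses. Uniform vanishing of $f$ at infinity only gives $|f|\leq\epsilon(R)$ on a half-circle of radius $R$, so the half-circle contribution is bounded by $\pi R\,\epsilon(R)\,e^{rc}$, which need not tend to zero. The argument that works is already implicit in your norm estimate: for $\xi=e^{i\alpha}r$ with $r<0$, the bound $|\widetilde f(\xi)|\leq\tfrac{1}{2\pi}e^{rc}\int_{C+ie^{-i\alpha}\R}|f(t)|\,|dt|$ holds for every admissible $c=\Re(e^{i\alpha}C)$, so if the supremum of these line integrals over $c\geq\Re(e^{i\alpha}A_1)$ is finite (as the norm inequality in the statement presupposes), letting $c\to+\infty$ gives $\widetilde f(\xi)=0$ directly.
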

\begin{proof}
See \cite{D}, Theorems 28.1 and 28.2.
\qed\end{proof}

\medskip
Under the assumptions of Theorem~\ref{theorem:BL-1}, the Borel transformation converts derivative to multiplication by $-\xi$:
$$\Cal B_\alpha[\tfrac{df}{dt}](\xi)=-\xi\cdot\Cal B_\alpha [f](\xi),$$
which can be seen by integration by parts.
It also converts the product to the convolution:
$$\Cal B_\alpha[f\!\cdot\!g]\,(\xi)=\left[\Cal B_\alpha[f] * \Cal B_\alpha[g]\right]_\alpha(\xi),$$
defined by 
\begin{equation}
 \big[\phi*\psi\big]_\alpha(\xi)= \big[\psi*\phi\big]_\alpha(\xi) :=\int_{-\infty e^{i\alpha}}^{+\infty e^{i\alpha}} \phi(\zeta)\,\psi(\xi-\zeta)\,d\zeta.
\end{equation}
Indeed, we have
$\,\Cal L_\alpha[\phi *\psi](t)=\Cal L_\alpha[\phi](t)\cdot\Cal L_\alpha[\psi](t)$ using Fubini theorem and Theorem~\ref{theorem:BL-1}, 
and the assertion is obtained by the inversion theorem of the Laplace transform: $\Cal B_\alpha\big[\Cal L_\alpha[\phi]\big](\xi)=\frac{1}{2}\lim_{\nu\to 0+}\phi(\xi+e^{i\alpha}\nu)+\phi(\xi-e^{i\alpha}\nu)\,$ (cf. \cite{D}, Theorem 24.3), using the continuity of $[\phi\!*\!\psi]_\alpha(\xi)$.

\begin{lemma}[Young's inequality]\label{lemma:BL-2}
\begin{align*}
|\phi*\psi|_{e^{i\alpha}\R}^{A,B} &\leq \,|\phi|_{e^{i\alpha}\R}^{A,B} \cdot \|\psi\|_{e^{i\alpha}\R}^{A,B} \quad(\text{ and } \leq \|\phi\|_{e^{i\alpha}\R}^{A,B}\cdot |\psi|_{e^{i\alpha}\R}^{A,B}\,),\\[8pt]
\|\phi*\psi\|_{e^{i\alpha}\R}^{A,B} &\leq  \|\phi\|_{e^{i\alpha}\R}^{A,B}\!\cdot \|\psi\|_{e^{i\alpha}\R}^{A,B}.
 \end{align*}
\end{lemma}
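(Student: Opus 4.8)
The plan is to reduce both stated inequalities to the classical Young inequality for convolution of functions on the real line, by absorbing the exponential weights into the functions and checking that the weights $|e^{-A\xi}| + |e^{-B\xi}|$ behave sub-multiplicatively along $e^{i\alpha}\R$. The key elementary fact is the following: for $s,\sigma\in e^{i\alpha}\R$ with $\xi = s+\sigma$, we claim
$$\big(|e^{-A\xi}|+|e^{-B\xi}|\big)\ \leq\ \big(|e^{-As}|+|e^{-Bs}|\big)\cdot\big(|e^{-A\sigma}|+|e^{-B\sigma}|\big).$$
Indeed, write $e^{i\alpha}s = u$, $e^{i\alpha}\sigma = v$ with $u,v\in\R$, so that $e^{i\alpha}\xi = u+v$; then $|e^{-A\xi}| = e^{-\Re(e^{-i\alpha}A)(u+v)}$ and similarly for $B$, and the claimed inequality becomes $(a^{u+v}+b^{u+v}) \leq (a^u+b^u)(a^v+b^v)$ for positive reals $a = e^{-\Re(e^{-i\alpha}A)}$, $b = e^{-\Re(e^{-i\alpha}B)}$, $u,v\in\R$, which is immediate since the right-hand side expands to $a^{u+v}+b^{u+v} + a^u b^v + a^v b^u$ with the last two terms nonnegative. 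This sub-multiplicativity of the weight is the only structural input beyond the classical argument.

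First I would set up coordinates on the line: parametrize $\xi = e^{i\alpha}r$, $r\in\R$, and write $w(\xi) := |e^{-A\xi}|+|e^{-B\xi}|$, noting that with this parametrization $\|\phi\|_{e^{i\alpha}\R}^{A,B} = \int_{\R} |\phi(e^{i\alpha}r)|\,w(e^{i\alpha}r)\,dr$ and $|\phi|_{e^{i\alpha}\R}^{A,B} = \sup_{r} |\phi(e^{i\alpha}r)|\,w(e^{i\alpha}r)$, while the convolution reads $[\phi*\psi]_\alpha(e^{i\alpha}r) = e^{i\alpha}\int_{\R} \phi(e^{i\alpha}\rho)\,\psi(e^{i\alpha}(r-\rho))\,d\rho$. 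For the first inequality, I estimate directly: using the weight bound with $s = e^{i\alpha}\rho$, $\sigma = e^{i\alpha}(r-\rho)$,
$$w(e^{i\alpha}r)\,\big|[\phi*\psi]_\alpha(e^{i\alpha}r)\big| \leq \int_{\R} \big(w(s)|\phi(s)|\big)\big(w(\sigma)|\psi(\sigma)|\big)\,d\rho \leq |\phi|_{e^{i\alpha}\R}^{A,B}\int_{\R} w(\sigma)|\psi(\sigma)|\,d\rho = |\phi|_{e^{i\alpha}\R}^{A,B}\,\|\psi\|_{e^{i\alpha}\R}^{A,B},$$
and taking the supremum over $r$ gives the claim; the symmetric bound follows by interchanging the roles of $\phi$ and $\psi$. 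For the second inequality I apply the same weight bound to move all weights onto the two factors, obtaining $w(\xi)|[\phi*\psi]_\alpha(\xi)| \leq \big[(w|\phi|)*(w|\psi|)\big]_\alpha(\xi)$ pointwise (here the ordinary $L^1$ convolution on the line), and then integrate in $r$ and apply Fubini/Tonelli to factor $\int_{\R}\big[(w|\phi|)*(w|\psi|)\big]_\alpha = \big(\int_{\R} w|\phi|\big)\big(\int_{\R} w|\psi|\big) = \|\phi\|_{e^{i\alpha}\R}^{A,B}\,\|\psi\|_{e^{i\alpha}\R}^{A,B}$.

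I expect no serious obstacle here: the content is the weight sub-multiplicativity lemma above, and everything else is the textbook proof of Young's inequality (the $L^1 * L^\infty \subseteq L^\infty$ and $L^1 * L^1 \subseteq L^1$ cases) transcribed to the oriented line $e^{i\alpha}\R$. The only point requiring a word of care is the interchange of integration order in the second inequality, which is justified by Tonelli's theorem since all integrands are nonnegative; and one should note the estimates are vacuous (both sides possibly $+\infty$) unless the relevant norms on $\phi,\psi$ are finite, in which case finiteness of the left-hand side is part of the conclusion.
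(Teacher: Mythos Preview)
Your approach is correct and is exactly the paper's: the entire content of the proof is the sub-multiplicativity inequality
\[
|e^{-A\xi}|+|e^{-B\xi}| \leq \big(|e^{-As}|+|e^{-Bs}|\big)\big(|e^{-A(\xi-s)}|+|e^{-B(\xi-s)}|\big),
\]
after which the two Young-type bounds are routine. The paper states this inequality and writes ``the rest follows easily''; you spell out the easy part. One small slip: in your parametrization you write $e^{i\alpha}s=u$, but for $s\in e^{i\alpha}\R$ you want $s=e^{i\alpha}u$ (equivalently $e^{-i\alpha}s=u$), and then $|e^{-As}|=e^{-\Re(e^{i\alpha}A)u}$. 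This does not affect the argument, since the inequality $(a^{u+v}+b^{u+v})\leq(a^u+b^u)(a^v+b^v)$ holds for any positive $a,b$ and real $u,v$; in fact the sub-multiplicativity holds for arbitrary $s,\sigma\in\C$ with $\xi=s+\sigma$, simply because $|e^{-A\xi}|=|e^{-As}|\,|e^{-A\sigma}|$ and the cross terms in the expanded product are nonnegative.
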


\begin{proof}
Observe that
\begin{equation}\label{eq:BL-300}
(|e^{-As}|+|e^{-Bs}|) \leq (|e^{-A\sigma}|+|e^{-B\sigma}|)\cdot (|e^{-A(s-\sigma)}|+|e^{-B(s-\sigma)}|),
\end{equation}
the rest follows easily.
\qed\end{proof}

\begin{remark}[Convolution of analytic functions on open strips]~
In the subsequent text, rather then dealing with functions on a single line $e^{i\alpha}\R$, 
one will work with  functions which are analytic on some open strips in the $\xi$-plane (also called the \emph{Borel plane}), 
or on more general regions obtained as connected unions of open strips  of varying directions $\alpha$.


If  $\Omega_j,\ j\!=\!1,2,$ are two open strips of the same direction $\alpha$, and $\phi_j\in\Cal O(\Omega_j)$ are two analytic functions of bounded $\|\cdot\|_{c_j+e^{i\alpha}\R}^{A,B}$-norms, 
then their convolution
$$[\phi_1*\phi_2](\xi)=\int_{c_1-\infty e^{i\alpha}}^{c_1+\infty e^{i\alpha}}\phi_1(\zeta)\,\phi_2(\xi-\zeta)\,d\zeta, \quad \xi\in c_1+c_2+e^{i\alpha}\R, \ c_j\in\Omega_j$$
is well defined and analytic on the strip $\Omega_1+\Omega_2$.

\end{remark}

\begin{definition}[Dirac distributions in the Borel plane]\label{sec:BL-distributions}
It will be convenient to introduce for each $a\in\C$ the Dirac mass distribution $\delta_a(\xi)$, acting  on the Borel plane as shift operators $\xi\mapsto \xi-a$:
If $\phi(\xi)$ is an analytic function  on some strip $\Omega$ in a direction $\alpha$ one defines
$$\big[\delta_a*\phi\big](\xi):=\phi(\xi-a),$$
its translation to the strip $\Omega-a$.
With this definition, the operator $\delta_0$ plays the role of the unity of convolution.
One can represent each  $\delta_a$ as a ``boundary value'' of the function
 $\frac{1}{2\pi i\,(\xi-a)}$ (cf. \cite{B}): 
Let
$$\delta_a^\downarrow(\xi):= \tfrac{1}{2\pi i\,(\xi-a)}\restriction_{\C\sminus[a,a+\infty\,ie^{i\alpha})},\qquad
\delta_a^\uparrow(\xi):= \tfrac{1}{2\pi i\,(\xi-a)}\restriction_{\C\sminus[a,a-\infty\,ie^{i\alpha})},$$
be its restrictions to the two cut regions (see Figure~\ref{figure:BL-0a}), 
one then writes 
$$\delta_a(\xi)=\delta_a^\downarrow(\xi)-\delta_a^\uparrow(\xi),$$
and defines the convolution and the Laplace transform involving $\delta_a$ by integrating each term 
$\delta_a^\downarrow$ (resp. $\delta_a^\uparrow$)
along  deformed paths $\gamma_\alpha^\downarrow$ (resp. $\gamma_\alpha^\uparrow$) of direction $\alpha$ in their respective domains as in Figure~\ref{figure:BL-0a},
\begin{align*}
\big[\delta_a*\phi\big](\xi)&=V.P.\int_{\gamma_\alpha^\downarrow-\gamma_\alpha^\uparrow}\tfrac{1}{2\pi i\,(\zeta-a)}\,\phi(\xi-\zeta)\,d\zeta=\phi(\xi-a),\\
\Cal L_\alpha[\delta_a](t)&=V.P.\int_{\gamma_\alpha^\downarrow-\gamma_\alpha^\uparrow}\tfrac{1}{2\pi i\,(\xi-a)}\,e^{-t\xi}\,d\xi=e^{-at}.
\end{align*}
\end{definition}
\begin{figure}[ht]
\centering
\includegraphics[width=0.8\textwidth]{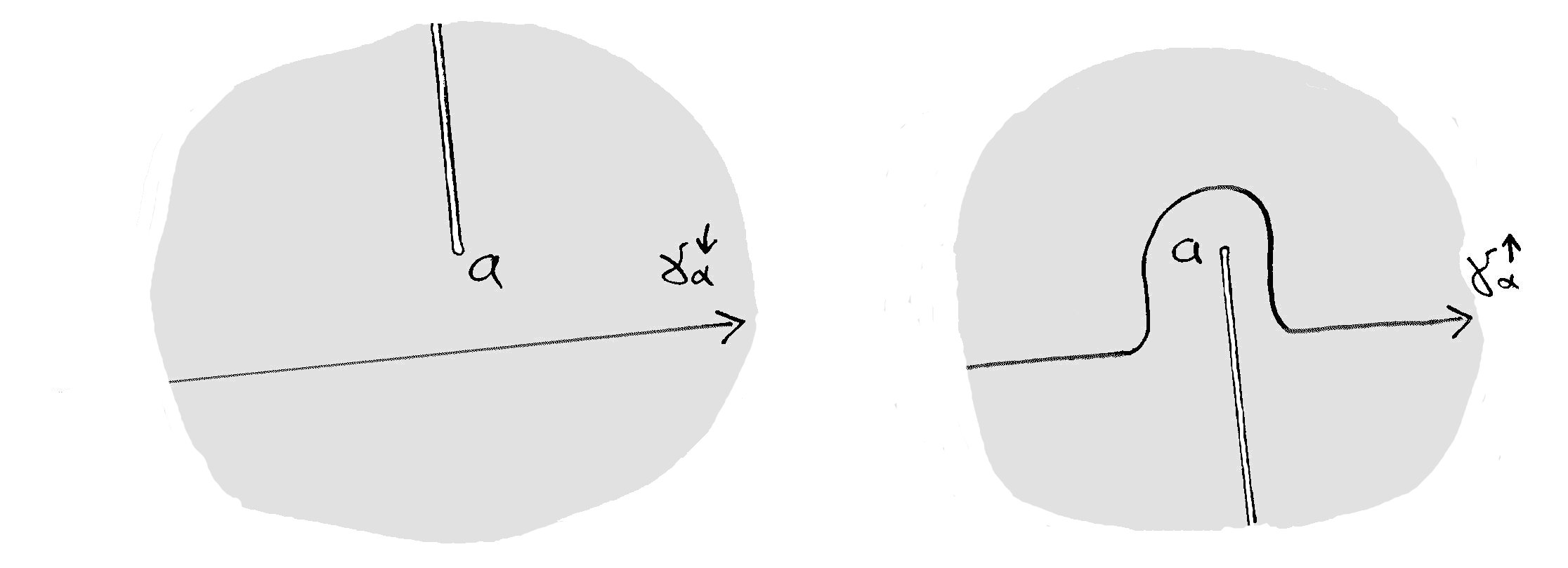}
\caption{The domains of definition of $\delta_a^\downarrow$ (resp. $\delta_a^\uparrow$) together with the deformed integration paths $\gamma_\alpha^\downarrow$ (resp. $\gamma_\alpha^\uparrow$).}
\label{figure:BL-0a}
\end{figure}


\section{The unfolded Borel and Laplace transformations associated to the vector field $(x^2\!-\!\epsilon)\frac{\partial}{\partial x}$}\label{sec:BL-3}

In this section we define the unfolded Borel and Laplace transformations $\Cal B_\alpha$, $\Cal L_\alpha$ \eqref{eq:BL-unfoldedBL} and summarize their basic properties.
We need to specify:
\begin{itemize}[noitemsep,topsep=2pt,parsep=2pt,partopsep=0pt]
\item[-] the branch of the multivalued time function $t(x,\epsilon)$ \eqref{eq:BL-t} of the kernel,
\item[-] the paths of integration,
\item[-] the domains in $x$-space and $\xi$-space where the transformations live,
\item[-] sufficient conditions on functions for which the transformations exist.
\end{itemize}
We provide these depending analytically on a root parameter $\sqrt\epsilon\in\C$. 
Here $\sqrt\epsilon$ is to be interpreted simply as a symbol for a new parameter 
(a coordinate on the ``$\sqrt\epsilon$-plane''), that naturally projects on the original parameter $\epsilon=(\sqrt\epsilon)^2$. 

\medskip

Let $t(x,\epsilon)$ \eqref{eq:BL-t}
be the complex time of the vector field $-(x^2\!-\epsilon)\frac{\partial}{\partial x}$ with $t(\infty,\epsilon)=0$, which is well defined for $x\in\CP^1\sminus[-\sqrt\epsilon,\sqrt\epsilon]$ and extended analytically as a ramified function.
Let us remark that the limit of the Riemann surface of $t(\cdot,\epsilon)$ 
as $\epsilon\to 0$ 
is composed of $\Z$-many complex planes identified at the origin, but the Riemann surface of $t(\cdot,0)$ 
is just the punctured $x$-plane in the middle.

\begin{definition}
For $0\leq \Lambda<\frac{\pi}{2\sqrt{|\epsilon|}}$, denote
$$\mathbf{X}(\Lambda,\sqrt\epsilon):=\{x\in\C \mid |t(x,\epsilon)-k\tfrac{\pi i}{\sqrt\epsilon}|>\Lambda,\ k\in\Z \}$$
an open neighborhood of the origin in the $x$-plane (of radius $\sim\frac{1}{\Lambda}$ when $\epsilon$ is small) containing the roots $\pm\sqrt\epsilon$.

\smallskip

For a direction $\alpha\in\R$ and $0\leq \Lambda<-\frac{1}{2}\Re\big(\frac{e^{i\alpha}\pi i}{\sqrt{\epsilon}}\big)$, 
let $\mathbf{T}_\alpha^\pm(\Lambda,\sqrt\epsilon)$ be as in \eqref{eq:BL-TalphaLambda}, a slanted strip in direction $-\alpha+\frac{\pi}{2}$ in the $t$-coordinate passing between two discs of radius $\Lambda$
centered at $0$ and $\mp\frac{\pi i}{\sqrt\epsilon}$,
and let $\check{\mathbf{X}}_\alpha^\pm(\Lambda,\sqrt\epsilon)$ be its projection to the $x$-plane (see Figures~\ref{figure:BL-1} and \ref{figure:BL-2}). 
More precisely, we shall consider them as subsets of the ramified Riemann surface of  $t(\cdot,\epsilon)$. 
Then the limits of $\check{\mathbf{X}}_\alpha^\pm(\Lambda,\sqrt\epsilon)\,$ when $\epsilon\to 0$ radially, split each into two opposed discs of radius $\frac{1}{2\Lambda}$ tangent at the origin, 
of which only one lies inside the $x$-plane: 
$\,\check{\mathbf{X}}_\alpha^+(\Lambda,0)$ is a disc centered at $e^{i\alpha}\frac{1}{2\Lambda}$, and 
$\,\check{\mathbf{X}}_\alpha^-(\Lambda,0)$ is a disc centered at $-e^{i\alpha}\frac{1}{2\Lambda}$ (Figure~\ref{figure:BL-2}\,(b)).

The interior of the domains $X^\pm(\!\sqrt\epsilon)$ of Theorem~\ref{theorem:BL-2} are ramified unions of such domains $\check{\mathbf{X}}_\alpha^\pm(\Lambda,\sqrt\epsilon)$.

\begin{figure}[ht]
\centering
\subfigure[$\sqrt\epsilon\neq 0$]{\includegraphics[width=0.51\textwidth]{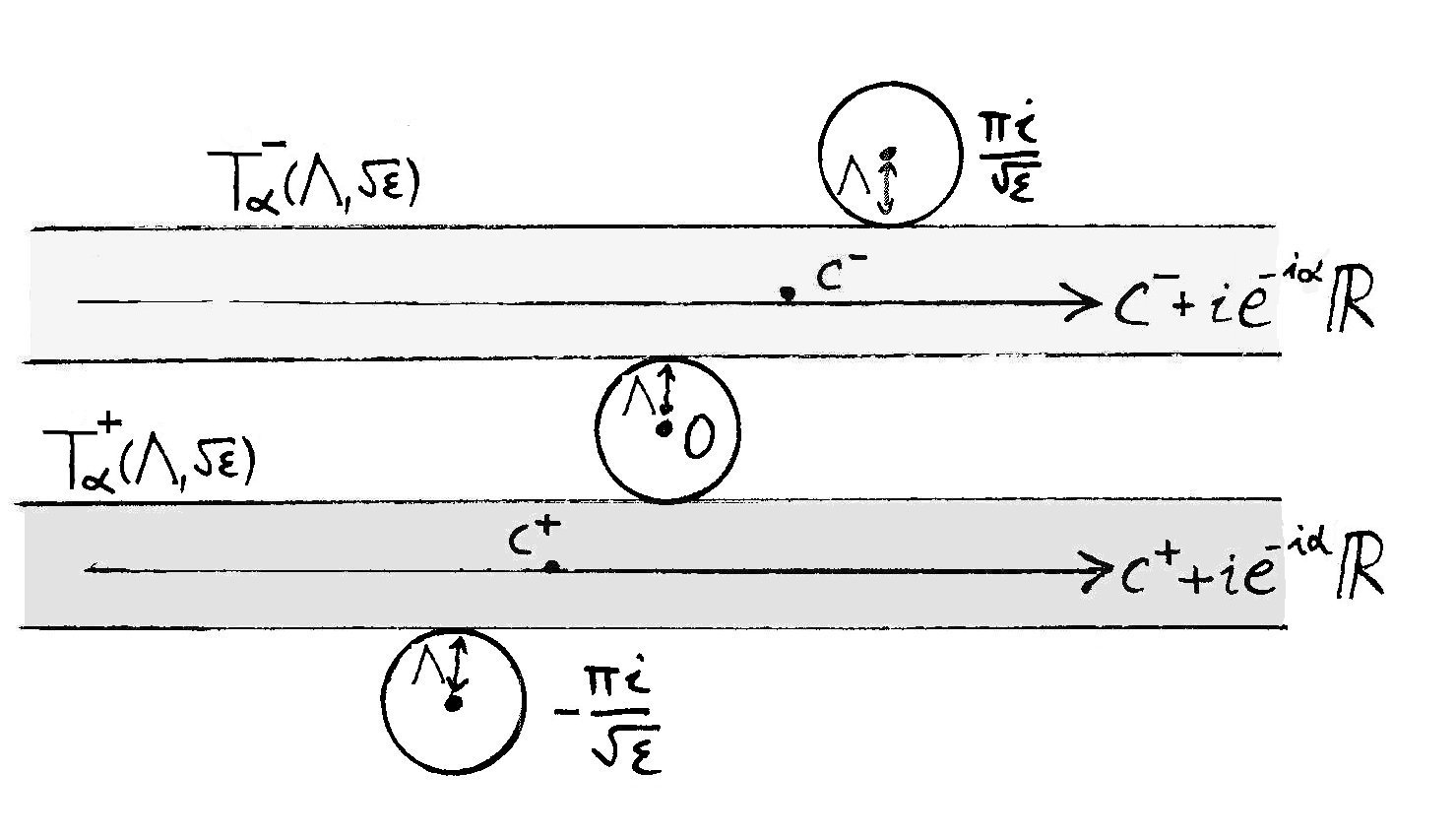}}
\subfigure[$\sqrt\epsilon=0$]{\includegraphics[width=0.46\textwidth]{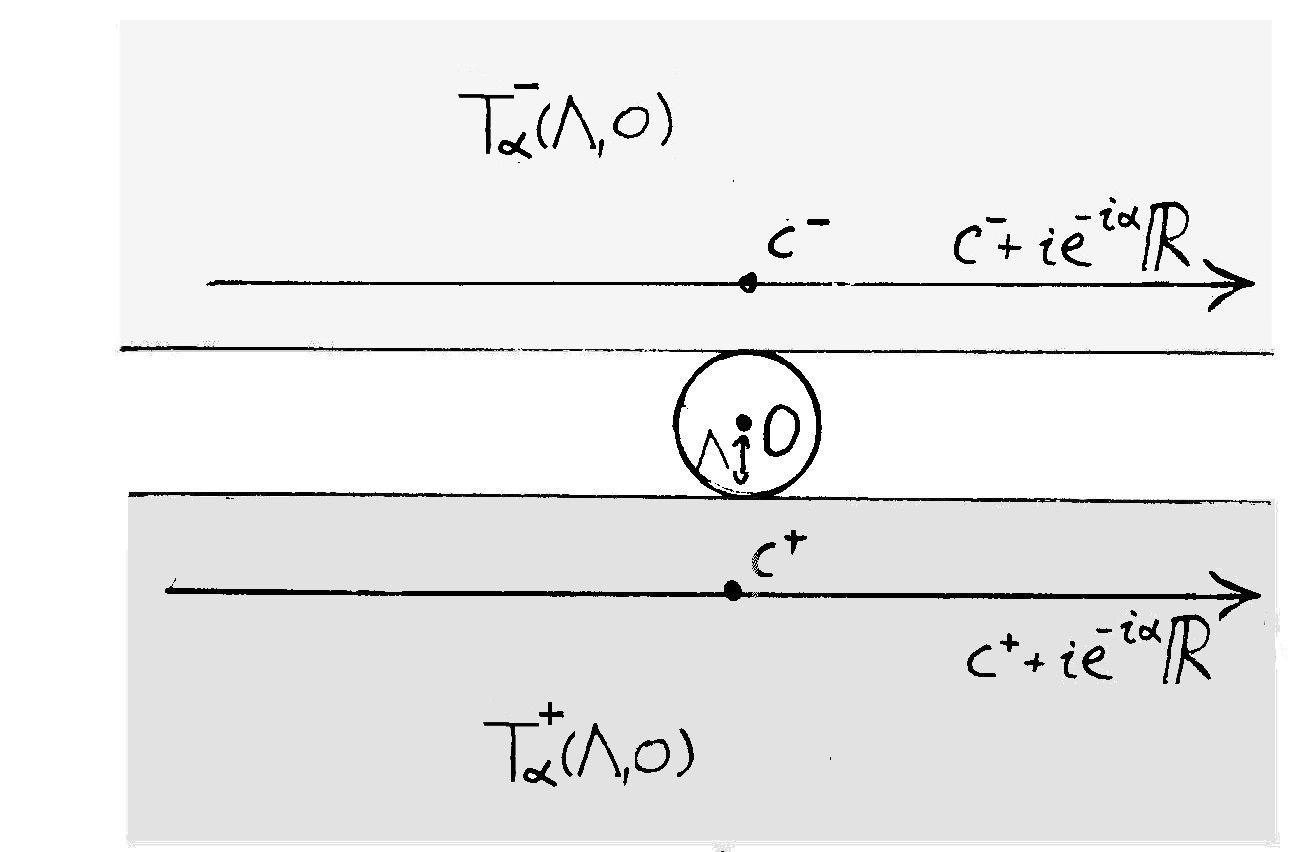}}
\caption{The domains $\mathbf{T}_\alpha^\pm(\Lambda,\sqrt\epsilon)$ in the time coordinate $t$ 
with the integration paths of the Borel transformation for $\alpha=\frac{\pi}{2}$.}
\label{figure:BL-1}
\end{figure}

\begin{figure}[ht]
\centering
\subfigure[$\sqrt\epsilon\neq 0$]{\includegraphics[width=0.46\textwidth]{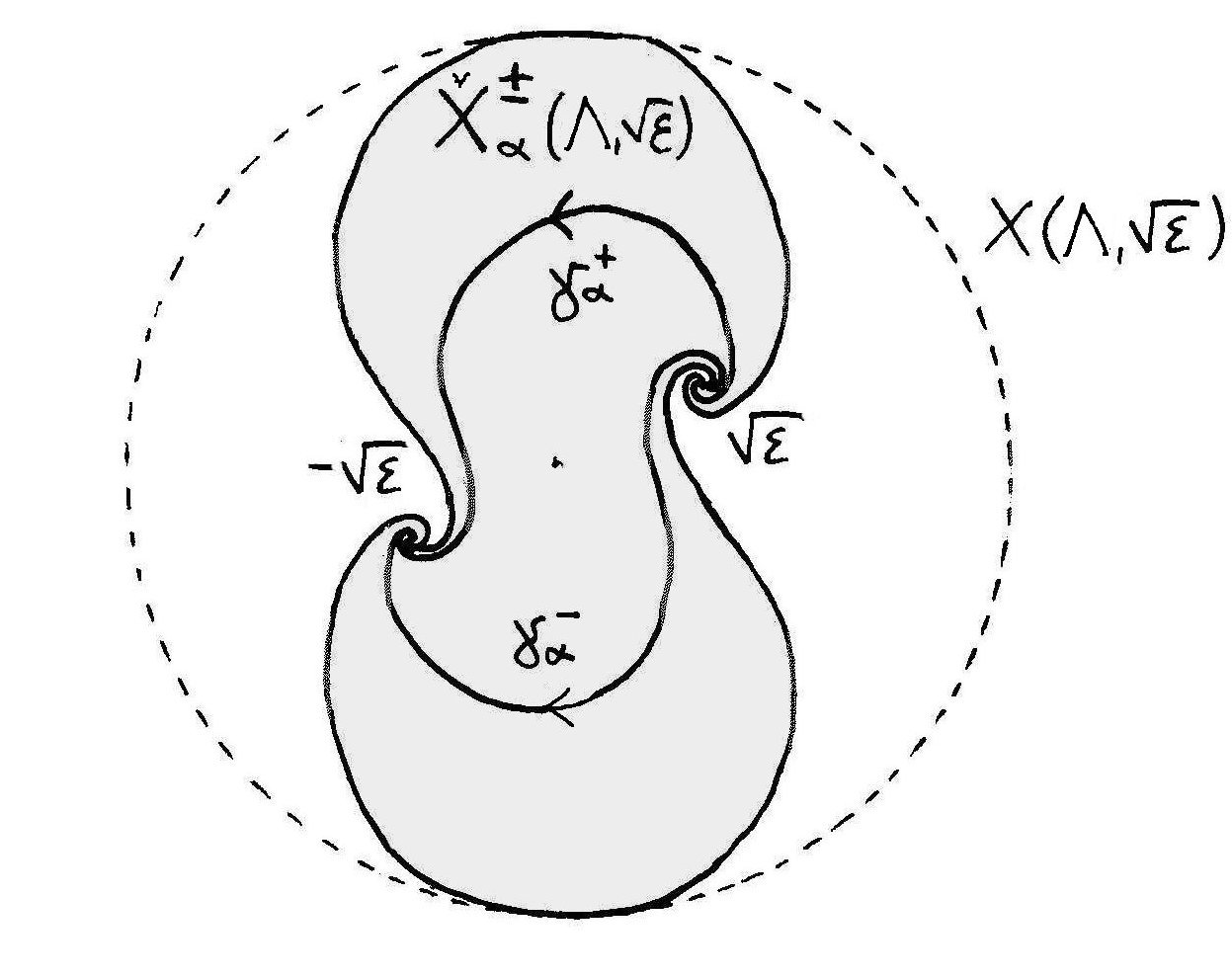}}
\subfigure[$\sqrt\epsilon=0$]{\includegraphics[width=0.5\textwidth]{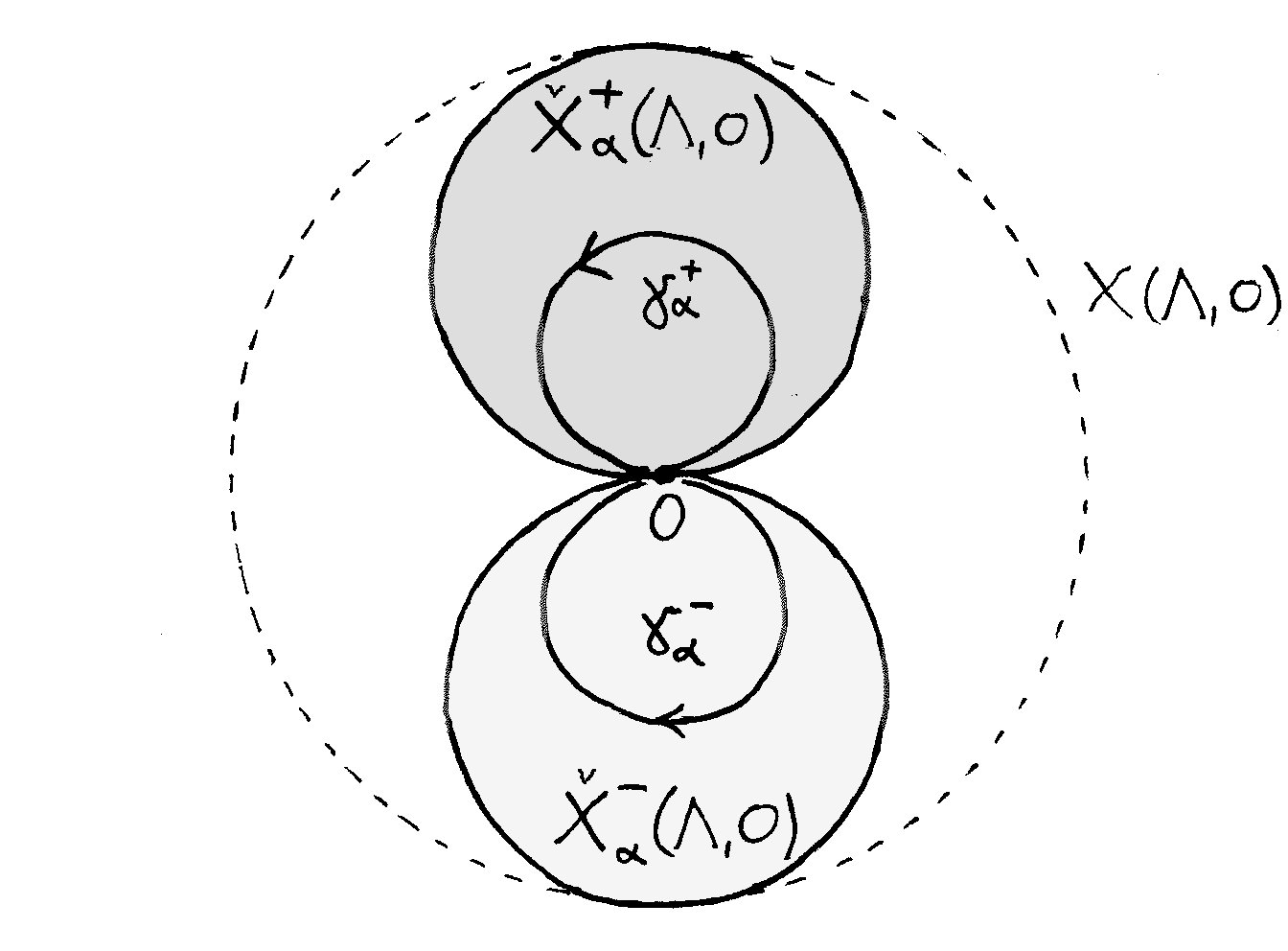}}
\caption{The domains $\check{\mathbf{X}}_\alpha^\pm(\Lambda,\sqrt\epsilon)$ projected to the $x$-plane for $\alpha=\frac{\pi}{2}$.
The integration paths $\gamma_\alpha^\pm$ are projections of the paths $c^\pm-ie^{-i\alpha}\R$ in the $t$-coordinate (they have opposite direction than those in Figure~\ref{figure:BL-1}).}
\label{figure:BL-2}
\end{figure}

\end{definition}

In order to apply the Borel transformation \eqref{eq:BL-B} in a direction $\alpha$ to a function $f$ analytic on the neighborhood 
$\mathbf{X}(\Lambda,\sqrt\epsilon)$,  
one may choose to lift $f$ to the Riemann surface of $t(\cdot,\sqrt\epsilon)$ either as a function on $\check{\mathbf{X}}_\alpha^+(\Lambda,\sqrt\epsilon)$ or to $\check{\mathbf{X}}_\alpha^-(\Lambda,\sqrt\epsilon)$ giving rise to two different transforms 
$\Cal B_\alpha^+[f]$ and $\Cal B_\alpha^-[f]$:

\begin{definition}\label{definition:BL-4}
Assume that $\check{\mathbf{X}}_\alpha^\pm(\Lambda,\sqrt\epsilon)\neq\emptyset$, $\alpha\in\,]\arg\sqrt\epsilon,\arg\sqrt\epsilon+\pi[$, and
let $f\in\Cal O\big(\check{\mathbf{X}}_\alpha^\pm(\Lambda,\sqrt\epsilon)\big)$  vanish at both points $\sqrt\epsilon,-\sqrt\epsilon$.
The \emph{unfolded Borel transforms} $\Cal B_\alpha^\pm[f]$ are defined as:
\begin{equation*}
\Cal B_\alpha^\pm [f](\xi,\sqrt\epsilon)=\tfrac{1}{2\pi i} \int_{c^\pm-\infty ie^{-i\alpha}}^{c^\pm+\infty ie^{-i\alpha}} f(x(t,\epsilon))\, e^{t\xi}\,d t, 
\qquad c^\pm\in {T}_\alpha^\pm(\Lambda,\sqrt\epsilon).
\end{equation*}

\noindent
\emph{For $\sqrt\epsilon\neq 0$}\,:
If $x\in\check{\mathbf{X}}_\alpha^\pm(\Lambda,\sqrt\epsilon)$, then 
\, $t(x,\epsilon)=-\tfrac{1}{2\sqrt\epsilon}\left(\log\frac{\sqrt\epsilon-x}{\sqrt\epsilon+x}\pm\pi i\right),\,$ 
\begin{equation}
\label{eq:BL-B+-}
\Cal B_\alpha^\pm [f](\xi,\sqrt\epsilon) = e^{\mp\frac{\xi\pi i}{2\sqrt\epsilon}}\cdot \tfrac{1}{2\pi i} 
\int_{\gamma_\alpha^\pm} \tfrac{f(x)}{x^2\!-\epsilon}\, \left(\tfrac{\sqrt\epsilon-x}{\sqrt\epsilon+x}\right)^{-\frac{\xi}{2\sqrt\epsilon}}\,dx, 
\end{equation}
where the integration path $\gamma_\alpha^\pm$ (see Figure~\ref{figure:BL-2}) follows a real time trajectory of the vector field $ie^{-i\alpha}(x^2\!-\!\epsilon)\frac{\partial}{\partial x}$ inside $\check{\mathbf{X}}_\alpha^\pm(\Lambda,\sqrt\epsilon)$.
Hence
\begin{align}\label{eq:BL-Bpm}
\Cal B_\alpha^-[f](\xi,\sqrt\epsilon)&=e^{\frac{\xi\pi i}{\sqrt\epsilon}}\cdot \Cal B_\alpha^+[f](\xi,\sqrt\epsilon)\\[6pt]
 &=-\Cal B_{\alpha+\pi}^+[f](\xi,e^{\pi i}\sqrt\epsilon),\label{eq:BL-Bpm2}
\end{align}
as $\check{\mathbf{X}}_\alpha^-(\Lambda,\sqrt\epsilon)=\check{\mathbf{X}}_{\alpha+\pi}^+(\Lambda,e^{\pi i}\sqrt\epsilon)$.

\medskip
\noindent 
\emph{For $\sqrt\epsilon=0$}\,:
\begin{equation}
\Cal B_\alpha^\pm [f](\xi,0) = \tfrac{1}{2\pi i} \int_{\gamma_\alpha^\pm} \tfrac{f(x)}{x^2}\, e^{\frac{\xi}{x}}\,dx,
\end{equation}
where $\gamma_\alpha^\pm$ is a real time trajectory of the vector field $ie^{-i\alpha}x^2\frac{\partial}{\partial x}$ inside $\check{\mathbf{X}}_\alpha^\pm(\Lambda,0)$. 
It is the radial limit of the precedent case as $\sqrt\epsilon\to 0$,
$$\Cal B_\alpha^\pm[f](\xi,0)=\lim_{\nu\to 0+} \Cal B_\alpha^\pm[f](\xi,\nu\sqrt\epsilon).$$
The transformation $\Cal B_\alpha^+ [f](\xi,0)$ is the standard analytic Borel transform \eqref{eq:BL-1} in direction $\alpha$, and
\begin{equation}
\Cal B_\alpha^-[f](\xi,0)=-\Cal B_{\alpha+\pi}^+[f](\xi,0).
\end{equation}

\end{definition}

The following proposition summarizes some basic proprieties of these unfolded Borel transformations.

\begin{proposition} \label{proposition:BL-4}~
Let $\alpha$ be a direction, and suppose that  $\,\arg\sqrt\epsilon\in\,]\alpha-\pi,\alpha[$ if $\epsilon\neq 0$.

\smallskip\noindent
1) If $\sqrt\epsilon\neq 0$, let a function  $f\in\Cal O(\check{\mathbf{X}}_\alpha^\pm(\Lambda,\sqrt\epsilon))$, 
be uniformly $O(|x\!-\!\sqrt\epsilon|^a|x\!+\!\sqrt\epsilon|^b)$ at the points $\pm\sqrt\epsilon$, for some $a,b\in\R$ with $a+b>0$.
Then the transforms $\Cal B_\alpha^\pm [f](\xi,\sqrt\epsilon)$ converge absolutely for $\xi$ in the strip
\begin{equation}\label{eq:BL-4}
\Omega_\alpha=\{-\Im(e^{-i\alpha}2b\sqrt\epsilon) > \Im(e^{-i\alpha}\xi) > \Im(e^{-i\alpha}2a\sqrt\epsilon) \},
\qquad\text{see Figure~\ref{figure:BL-3}},
\end{equation}
and are analytic extensions of each other for varying $\alpha$.
Moreover for any $\Lambda<\Lambda_1<-\Re\big(\frac{e^{i\alpha}\pi i}{2\sqrt\epsilon}\big):=\lambda_\alpha(\!\sqrt\epsilon)$,
they are of bounded norm $|\Cal B_\alpha^\pm [f]|_{c+e^{i\alpha}\R}^{\Lambda_1,\lambda_\alpha-\Lambda_1}$ on  any line $c+e^{i\alpha}\R\subseteq\Omega_\alpha$.

\begin{figure}[ht]
\vskip-12pt
\centering
\includegraphics[width=0.55\textwidth]{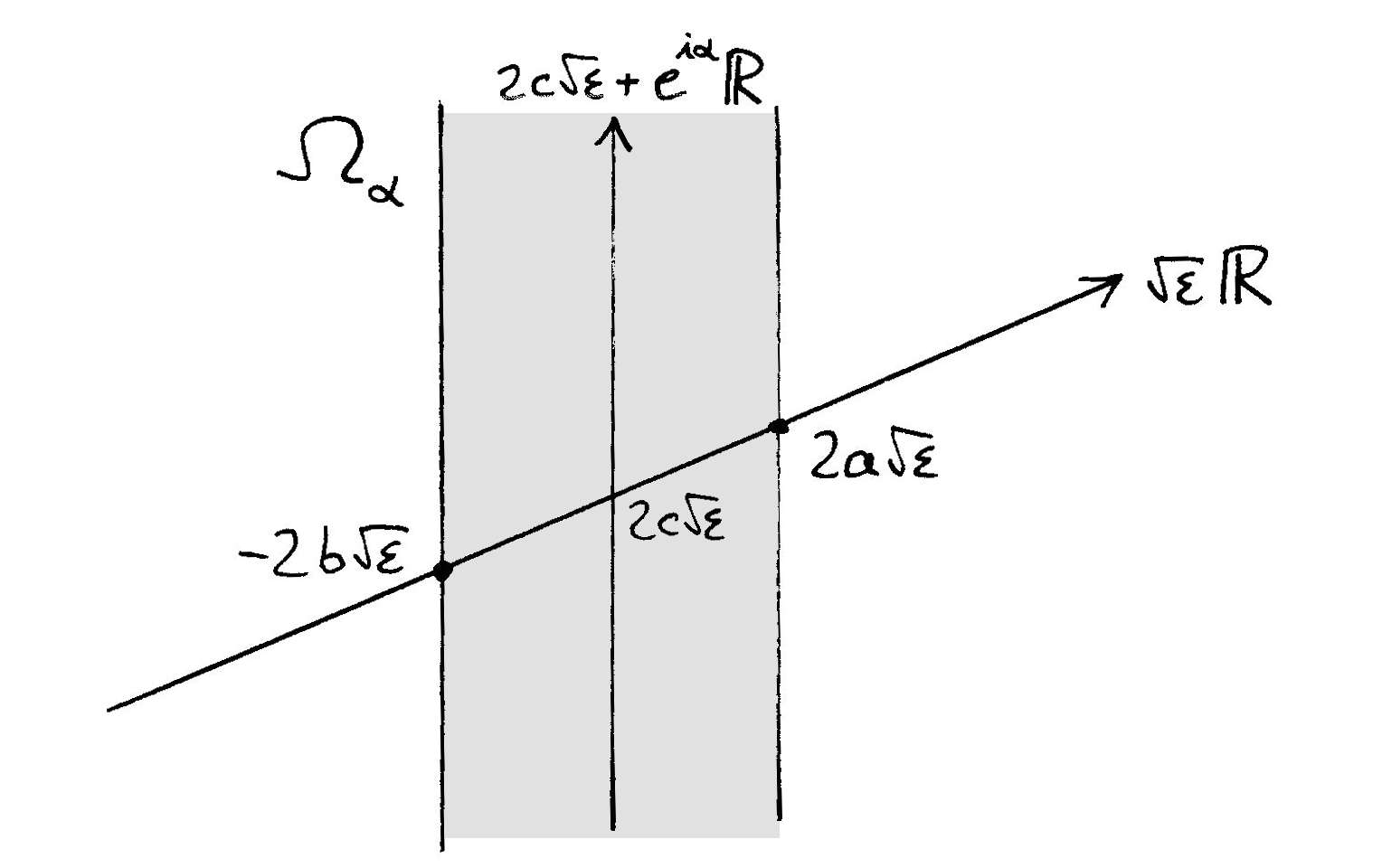}
\caption{The strip $\Omega_\alpha$ in the $\xi$-plane.}
\label{figure:BL-3}
\end{figure}

\smallskip\noindent
2)  If $\sqrt\epsilon\neq0$ and $a+b>0$, then for $\xi\in\Omega_\alpha$ (defined in \eqref{eq:BL-4})
$$\Cal B_\alpha^+[(x\!-\!\sqrt\epsilon)^a(x\!+\!\sqrt\epsilon)^b](\xi,\sqrt\epsilon)=
e^{-\frac{\xi\pi i}{2\sqrt\epsilon}+a\pi i}\cdot (2\sqrt\epsilon)^{a+b-1}\cdot \tfrac{1}{2\pi i}\,B(a-\tfrac{\xi}{2\sqrt\epsilon},b+\tfrac{\xi}{2\sqrt\epsilon}),$$
where $B$ is the Beta function.

\medskip\noindent
3) In particular, for a positive integer $n$, and $\xi$ in the strip in between $0$ and $2n\sqrt\epsilon$,
$$\Cal B_\alpha^\pm[(x\!-\!\sqrt\epsilon)^n](\xi,\sqrt\epsilon)=\chi_\alpha^\pm(\xi,\sqrt\epsilon)\cdot
\big(\tfrac{\xi}{(n-1)}-2\sqrt\epsilon\big)\cdot\big(\tfrac{\xi}{(n-2)}-2\sqrt\epsilon\big)\cdot\ldots\cdot\big(\tfrac{\xi}{1}-2\sqrt\epsilon\big),
$$
where
for $\sqrt\epsilon\neq 0$ and $\alpha\in\,]\arg\sqrt\epsilon,\arg\sqrt\epsilon+\pi[$ 
\begin{equation}\label{eq:BL-chi} 
\chi_\alpha^+(\xi,\sqrt\epsilon):= \frac{1\ \ }{1-e^{\frac{\xi\pi i}{\sqrt\epsilon}}}, \qquad
\chi_\alpha^-(\xi,\sqrt\epsilon):= \frac{-1\ \ \ }{1-e^{-\frac{\xi\pi i}{\sqrt\epsilon}}}=\chi_\alpha^+(\xi,\sqrt\epsilon)-1,
\end{equation}
and for $\sqrt\epsilon=0$
\begin{equation*}
\chi_\alpha^+(\xi,0):=\left\{ \begin{array}{ll}     
     1, & \text{if $\ \xi\in\,]0,+\infty e^{i\alpha}[$,}\\[3pt]
     \frac{1}{2}, & \text{if $\ \xi=0$,}\\[3pt]
     0, & \text{if $\ \xi\in\,]\!-\!\infty e^{i\alpha},0[$,}
   \end{array}\right. \qquad \chi_\alpha^-(\xi,0):=\, \chi_\alpha^+(\xi,0)-1.\qquad
\end{equation*}
Let us remark that $\,\chi_\alpha^\pm(\xi,\nu\sqrt\epsilon)\xrightarrow{\nu\to 0+}\chi_\alpha^\pm(\xi,0)\,$ for $\xi\in e^{i\alpha}\R\sminus\{0\}$.

\medskip\noindent
4) If $f(x)$ is analytic on an open disc of radius $r>2\sqrt{|\epsilon|}$ centered at $x_0=-\sqrt\epsilon$ 
(or $x_0=\sqrt\epsilon$) and $f(x_0)=0$, then
$$\Cal B_\alpha^\pm[f](\xi,\sqrt\epsilon)=\chi_\alpha^\pm(\xi,\sqrt\epsilon)\cdot\phi(\xi)$$
where $\phi$ is is an entire function with at most exponential growth at infinity
$\,\leq e^\frac{|\xi|}{R-2\sqrt{|\epsilon|}}\cdot O(\sqrt{|\xi|})\,$ for any $2\sqrt{|\epsilon|}<R<r$  (where the big $O$ is uniform for $(\xi,\sqrt\epsilon)\to(\infty, 0)$).

\medskip\noindent
5) For $\sqrt\epsilon\neq 0$, $c\in\C$, the Borel Transform $\,\Cal B_\alpha^\pm\!\left[\left(\frac{x\!-\!\sqrt\epsilon}{x\!+\!\sqrt\epsilon}\right)^{\!c}\right]\!(\xi,\sqrt\epsilon)=\delta_{2c\sqrt\epsilon}(\xi)\,$ 
is the Dirac mass at $2c\sqrt\epsilon$, acting as translation operator on the Borel plane by
$\xi\mapsto \xi-2c\sqrt\epsilon$:
$$\Cal B_\alpha^\pm\!\left[\left(\tfrac{x\!-\!\sqrt\epsilon}{x\!+\!\sqrt\epsilon}\right)^c\!\!\cdot f\right]\!(\xi,\sqrt\epsilon)=\Cal B_\alpha^\pm [f](\xi-2c\sqrt\epsilon,\sqrt\epsilon).$$
\end{proposition}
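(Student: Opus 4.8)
The plan is to deduce all five assertions from the classical Fourier--Laplace calculus of Section~\ref{sec:BL-2} by passing to the time coordinate $t=t(x,\epsilon)$ of \eqref{eq:BL-t}: under this change of variable $\check X_\alpha^\pm(\Lambda,\sqrt\epsilon)$ becomes an open strip of direction $-\alpha+\tfrac{\pi}{2}$, the integration path of Definition~\ref{definition:BL-4} is literally the one of Definition~\ref{definition:BL-2}, and $\Cal B_\alpha^\pm[f](\xi,\sqrt\epsilon)=\Cal B_\alpha\big[t\mapsto f(x(t,\epsilon))\big](\xi)$. For item~1 I would use the substitution $w=e^{-2\sqrt\epsilon\,t}=\tfrac{x-\sqrt\epsilon}{x+\sqrt\epsilon}$, i.e. $x=\sqrt\epsilon\tfrac{1+w}{1-w}$, $x-\sqrt\epsilon=\tfrac{2\sqrt\epsilon\,w}{1-w}$, $x+\sqrt\epsilon=\tfrac{2\sqrt\epsilon}{1-w}$. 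Because $\arg\sqrt\epsilon\in(\alpha-\pi,\alpha)$, running to the two ends of a line $c^\pm+ie^{-i\alpha}\R$ of the strip forces $|w|\to 0$ resp. $|w|\to\infty$, that is $x\to\sqrt\epsilon$ resp. $x\to-\sqrt\epsilon$; feeding the hypothesis $f=O(|x-\sqrt\epsilon|^a|x+\sqrt\epsilon|^b)$ into these expansions turns it into a bound $|f(x(t))|\big(|e^{-At}|+|e^{-Bt}|\big)<+\infty$ on each such line, with $A=e^{-i\alpha}\Lambda_1$ and $B=-\tfrac{e^{i\alpha}\pi i}{\sqrt\epsilon}-e^{-i\alpha}\Lambda_1$. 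Corollary~\ref{corollary:BL-1} and Theorem~\ref{theorem:BL-1} then give absolute convergence of $\Cal B_\alpha^\pm[f]$ on precisely the strip $\Omega_\alpha$ of \eqref{eq:BL-4} together with the bound on $|\Cal B_\alpha^\pm[f]|_{c+e^{i\alpha}\R}^{A,B}$; analyticity in $\xi$ and the fact that the transforms for nearby $\alpha$ continue one another follow from Cauchy's theorem, the integrands coinciding on the overlaps of the $\check X_\alpha^\pm$.

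For items~2 and~3 I would substitute $f(x)=(x-\sqrt\epsilon)^a(x+\sqrt\epsilon)^b$ into the kernel form \eqref{eq:BL-B+-} and change variable to $u=\tfrac{\sqrt\epsilon-x}{\sqrt\epsilon+x}$: the integral over $\gamma_\alpha^\pm$ reduces, by a contour deformation, to Euler's Beta integral $\int_0^{+\infty}u^{\,p-1}(1+u)^{-p-q}\,du=B(p,q)$ with $p=a-\tfrac{\xi}{2\sqrt\epsilon}$, $q=b+\tfrac{\xi}{2\sqrt\epsilon}$, convergent exactly on the stated strip, and collecting the elementary factors and the branch contributions of $(-1)^{a}$ and $e^{\mp\frac{\xi\pi i}{2\sqrt\epsilon}}$ yields exactly $e^{-\frac{\xi\pi i}{2\sqrt\epsilon}+a\pi i}(2\sqrt\epsilon)^{a+b-1}\tfrac{1}{2\pi i}B(a-\tfrac{\xi}{2\sqrt\epsilon},b+\tfrac{\xi}{2\sqrt\epsilon})$. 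Item~3 is the case $a=n\in\N^*$, $b=0$: writing $\Gamma(n-z)=\prod_{k=1}^{n-1}(k-z)\cdot\Gamma(1-z)$ with $z=\tfrac{\xi}{2\sqrt\epsilon}$ and applying the reflection formula $\Gamma(z)\Gamma(1-z)=\pi/\sin\pi z$, the factor $\tfrac{1}{2\pi i}\tfrac{\pi}{\sin\pi z}=\big(e^{i\pi z}-e^{-i\pi z}\big)^{-1}$ combines with $e^{-\frac{\xi\pi i}{2\sqrt\epsilon}+n\pi i}(-1)^{n-1}=-e^{-\frac{\xi\pi i}{2\sqrt\epsilon}}$ to produce exactly $\chi_\alpha^+(\xi,\sqrt\epsilon)$ from \eqref{eq:BL-chi}, while $(2\sqrt\epsilon)^{n-1}\prod_{k=1}^{n-1}(k-z)/(n-1)!$ rearranges into $\prod_{k=1}^{n-1}\big(\tfrac{\xi}{k}-2\sqrt\epsilon\big)$; the $\chi_\alpha^-$ case follows from \eqref{eq:BL-Bpm}, and the $\sqrt\epsilon=0$ case from the radial limit of \eqref{eq:BL-B+-}, recovering the classical $\widehat B[x^n](\xi)=\tfrac{\xi^{n-1}}{(n-1)!}$.

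For item~4 I would expand $f(x)=\sum_{n\ge1}c_n(x+\sqrt\epsilon)^n$ on $|x+\sqrt\epsilon|<r$ (so $c_0=0$), apply term by term the $(x+\sqrt\epsilon)^n$-analogue of item~3 (the same computation with $\pm\sqrt\epsilon$ exchanged), and sum, obtaining $\Cal B_\alpha^\pm[f](\xi,\sqrt\epsilon)=\chi_\alpha^\pm(\xi,\sqrt\epsilon)\cdot\phi(\xi)$ with $\phi(\xi)=\sum_{n\ge1}c_n\prod_{k=1}^{n-1}\big(\tfrac{\xi}{k}+2\sqrt\epsilon\big)$; Cauchy's estimates $|c_n|\le M R^{-n}$ together with $\big|\prod_{k=1}^{n-1}(\tfrac{\xi}{k}+2\sqrt\epsilon)\big|\le\tfrac{1}{(n-1)!}\prod_{k=1}^{n-1}(|\xi|+2k\sqrt{|\epsilon|})$ and Stirling make the series converge locally uniformly ($\phi$ entire) and give the bound $e^{|\xi|/(R-2\sqrt{|\epsilon|})}\,O(\sqrt{|\xi|})$ for any $2\sqrt{|\epsilon|}<R<r$, the extra factor $\prod_k(|\xi|+2k\sqrt{|\epsilon|})/(n-1)!$ being exactly what replaces the radius $R$ by $R-2\sqrt{|\epsilon|}$ in the exponent. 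For item~5 one notes that on $\check X_\alpha^\pm(\Lambda,\sqrt\epsilon)$ one has $\big(\tfrac{x-\sqrt\epsilon}{x+\sqrt\epsilon}\big)^c=e^{\pm c\pi i}\big(\tfrac{\sqrt\epsilon-x}{\sqrt\epsilon+x}\big)^c=e^{-2c\sqrt\epsilon\,t(x,\epsilon)}$, which by the conventions of Section~\ref{sec:BL-distributions} is $\Cal L_\alpha[\delta_{2c\sqrt\epsilon}](t)$; applying $\Cal B_\alpha^\pm$, which inverts $\Cal L_\alpha$, returns $\delta_{2c\sqrt\epsilon}$. The translation identity is then most transparently read off directly from \eqref{eq:BL-B+-}: absorbing $\big(\tfrac{x-\sqrt\epsilon}{x+\sqrt\epsilon}\big)^c=e^{\pm c\pi i}\big(\tfrac{\sqrt\epsilon-x}{\sqrt\epsilon+x}\big)^c$ into the factor $\big(\tfrac{\sqrt\epsilon-x}{\sqrt\epsilon+x}\big)^{-\xi/2\sqrt\epsilon}$ replaces $\xi$ by $\xi-2c\sqrt\epsilon$ everywhere, including in the prefactor since $e^{\pm c\pi i}e^{\mp\frac{\xi\pi i}{2\sqrt\epsilon}}=e^{\mp\frac{(\xi-2c\sqrt\epsilon)\pi i}{2\sqrt\epsilon}}$; equivalently it is the product-to-convolution property of $\Cal B_\alpha$ extended to the distributional calculus of Section~\ref{sec:BL-distributions}, where $\delta_a*\phi=\phi(\cdot-a)$.

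The step I expect to give the most trouble is the precise bookkeeping. In item~1 the delicate point is matching the power-type growth of $f$ at the two edges $x=\pm\sqrt\epsilon$ against the two half-planes of Corollary~\ref{corollary:BL-1}, so that the domain of absolute convergence comes out to be exactly the strip $\Omega_\alpha$ and the constants $A,B$ are exactly those claimed; throughout items~2--5 the real hazard is tracking the branches of $\log$, of the fractional powers, and of the $\pm\pi i$ shifts, a single misplaced sign of which corrupts the identification of $\chi_\alpha^\pm$. The conceptually subtlest point is item~5: there $a+b=0$, the Beta integral of item~2 degenerates and the ``function part'' of $\Cal B_\alpha^\pm\big[(\tfrac{x-\sqrt\epsilon}{x+\sqrt\epsilon})^c\big]$ vanishes identically, so the Dirac content is invisible at the level of ordinary functions and can only be recovered through the regularization $\delta_a=\delta_a^\downarrow-\delta_a^\uparrow$ and deformed contours of Section~\ref{sec:BL-distributions}; making that identification rigorous rather than merely formal is the crux.
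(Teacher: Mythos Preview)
Your proposal is correct and follows essentially the same route as the paper's own proof: pass to the time coordinate $t$, use the explicit expressions $x\mp\sqrt\epsilon=2\sqrt\epsilon\,e^{-2\sqrt\epsilon t(\ldots)}/(1-e^{-2\sqrt\epsilon t})$ to translate the power-type growth at $\pm\sqrt\epsilon$ into exponential decay along the integration lines, reduce item~2 to Euler's Beta integral, derive item~3 from~2 via the reflection formula, prove item~4 by term-by-term application of~3 to the Taylor series together with a Stirling estimate, and read item~5 straight off the kernel $e^{-2c\sqrt\epsilon\,t}$. The only cosmetic difference is that for item~2 the paper picks the special direction $\alpha=\arg\sqrt\epsilon+\tfrac{\pi}{2}$ and substitutes $s=\tfrac{\sqrt\epsilon+x}{2\sqrt\epsilon}$ to land on the $[0,1]$-form $\int_0^1(1-s)^{p-1}s^{q-1}\,ds$ of the Beta integral, whereas you use $u=\tfrac{\sqrt\epsilon-x}{\sqrt\epsilon+x}$ and the $[0,\infty)$-form; both are equivalent.

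One small wording issue in item~1: the bound you formulate, $\sup_t |f(x(t))|\big(|e^{-At}|+|e^{-Bt}|\big)<\infty$, is the $|\cdot|^{A,B}$-type norm, but the hypothesis Theorem~\ref{theorem:BL-1} actually needs for absolute convergence of $\Cal B_\alpha$ is \emph{integrability} of $f(x(t))$ along each line $C+ie^{-i\alpha}\R$; invoking Corollary~\ref{corollary:BL-1} here is a slip, since that corollary concerns $\Cal L_\alpha$, not $\Cal B_\alpha$. The paper avoids this by estimating the integrand directly (writing $e^{t\xi}=e^{2c\sqrt\epsilon t}e^{(\xi-2c\sqrt\epsilon)t}$ with $c\in(-b,a)$ so that the second factor is bounded on the line and the first combines with the decay of $f$ to give exponential, hence integrable, decay at both ends). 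Your substitution $w=e^{-2\sqrt\epsilon t}$ produces exactly the same estimate once you phrase the conclusion as integrability rather than as a sup-norm bound, so the gap is purely presentational.
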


\smallskip
\begin{remark}
Although in \textit{1)} and  \textit{2)} of Proposition~\ref{proposition:BL-4} the function $f=O((x\!-\!\sqrt\epsilon)^a(x\!+\!\sqrt\epsilon)^b)$, $a+b>0$, might not vanish at both points $\pm\sqrt\epsilon$ as demanded in Definition~\ref{definition:BL-4}, 
one can write
$$(x\!-\!\sqrt\epsilon)^a(x\!+\!\sqrt\epsilon)^b=\left(\tfrac{x\!-\!\sqrt\epsilon}{x\!+\!\sqrt\epsilon}\right)^c(x\!-\!\sqrt\epsilon)^{a-c}(x\!+\!\sqrt\epsilon)^{b+c},
\quad\text{ for any } \ -b<c<a,$$
hence, using \textit{5)} of Proposition~\ref{proposition:BL-4}, the Borel transform $\Cal B_\alpha^\pm[f]$
is well defined as the translation by $2c\sqrt\epsilon$ of the Borel transform of the function $f\cdot\left(\frac{x\!-\!\sqrt\epsilon}{x\!+\!\sqrt\epsilon}\right)^{\!-c}$, this time vanishing at both points:
$$\Cal B_\alpha^\pm [f](\xi,\sqrt\epsilon)=\Cal B_\alpha^\pm \big[f\cdot\left(\tfrac{x\!-\!\sqrt\epsilon}{x\!+\!\sqrt\epsilon}\right)^{\!-c}\big](\xi-2c\sqrt\epsilon,\sqrt\epsilon).$$
\end{remark}

\smallskip
\begin{proof}[Proof of Proposition~\ref{proposition:BL-4}]
\textit{1)} For $\sqrt\epsilon\neq 0$, one can express 
$$x\!-\!\sqrt\epsilon=2\sqrt\epsilon\frac{e^{-2\sqrt\epsilon\,t}}{1-e^{-2\sqrt\epsilon\,t}}, \quad x\!+\!\sqrt\epsilon=2\sqrt\epsilon\frac{1}{1-e^{-2\sqrt\epsilon\,t}}.$$
If $\xi$ is in the strip $\Omega_\alpha$, \, $\xi\in 2c\sqrt\epsilon+e^{i\alpha}\R\,$ for some $c\in\,]\!-\!b,a[$, one writes
\begin{equation*}
\begin{split}
\Cal B_\alpha^\pm [(x\!-\!\sqrt\epsilon)^a(x\!+&\!\sqrt\epsilon)^b](\xi,\sqrt\epsilon)= \\
&=(2\sqrt\epsilon)^{a+b}\tfrac{1}{2\pi i}\int_{C^\pm+e^{-i\alpha}i\R} \frac{(e^{-2\sqrt\epsilon\,t})^{a-c}}{(1-e^{-2\sqrt\epsilon\,t})^{a+b}}\, e^{(\xi-2c\sqrt\epsilon)t}\,dt.
\end{split}
\end{equation*}
The term $e^{(\xi-2c\sqrt\epsilon)t}$ stays bounded along the integration path, while the term $\frac{e^{-2\sqrt\epsilon\,t\,(a-c)}}{(1-e^{-2\sqrt\epsilon\,t})^{a+b}}$ decreases
exponentially fast as $t\!-\!C^\pm\to +\infty i e^{-i\alpha}$ and $t\!-\!C^\pm \to -\infty i e^{-i\alpha}\,$, if $\alpha\notin \arg\sqrt\epsilon+\pi\Z$.

\smallskip
\textit{2)} From \eqref{eq:BL-B+-}
\begin{equation*}
\begin{split}
\Cal B_\alpha^+[(x\!-\!\sqrt\epsilon)^a(x\!+&\!\sqrt\epsilon)^b](\xi,\sqrt\epsilon)=\\
&= -e^{-\frac{\xi\pi i}{2\sqrt\epsilon}+a\pi i}\, \tfrac{1}{2\pi i} \int_{\gamma_\alpha^+}\! 
(\sqrt\epsilon\!-\!x)^{a-1-\frac{\xi}{2\sqrt\epsilon}}(\sqrt\epsilon\!+\!x)^{b-1+\frac{\xi}{2\sqrt\epsilon}}\,dx \\
&=  e^{-\frac{\xi\pi i}{2\sqrt\epsilon}+a\pi i} (2\sqrt\epsilon)^{a+b-1} 
\tfrac{1}{2\pi i}\int_{0}^{1}\! (1-s)^{a-1-\frac{\xi}{2\sqrt\epsilon}}\,s^{b-1+\frac{\xi}{2\sqrt\epsilon}}\,ds,  
\end{split}
\end{equation*}
substituting $s=\frac{\sqrt\epsilon+x}{2\sqrt\epsilon}$.
For $\alpha=\arg\sqrt\epsilon+\frac{\pi}{2}$, the integration path $\gamma_\alpha^+$ 
(= a real trajectory of the vector field $e^{-i\arg\sqrt\epsilon}(x^2\!-\!\epsilon)\frac{\partial}{\partial x}$) 
can be chosen as the straight oriented segment $(\sqrt\epsilon,-\sqrt\epsilon)$. The result follows.

\smallskip
\textit{3)} From \textit{2)} using standard formulas.\, 

\smallskip
\textit{4)} 
For $x_0=-\sqrt\epsilon$, one can write $f(x)$ as a convergent series $f(x)=\sum_{n=1}^{+\infty} a_n\,(x\!+\!\sqrt\epsilon)^n$ with $|a_n|\leq CK^n$ for some $C>0$ and $\frac{1}{r}<K<\frac{1}{R}$.
Hence
$$\big(1-e^{\frac{\xi\pi i}{\sqrt\epsilon}}\big)\cdot\Cal B^+[f](\xi,\sqrt\epsilon)=
\sum_{n=1}^{+\infty} a_n \left(\tfrac{\xi}{n-1}-2\sqrt\epsilon\right)\cdots\left(\tfrac{\xi}{1}-2\sqrt\epsilon\right)=:
\sum_{n=1}^{+\infty} b_n(\xi,\sqrt\epsilon),$$
where the series on the right is absolutely convergent for any $\xi\in\C$.
Indeed, let $N=N(\xi,\sqrt{\epsilon})$ be the positive integer such that 
\begin{equation}\label{eq:BL-104}
\tfrac{|\xi|}{N+1}\leq R-2\sqrt{|\epsilon|}<\tfrac{|\xi|}{N},
\end{equation}
then
\begin{itemize}
\item for $n\geq N+1$: $K\cdot(\frac{|\xi|}{n}+2\sqrt{|\epsilon|})\leq RK,\,$ 
\item for $n\leq N$: $2\sqrt{|\epsilon|}<\frac{2\sqrt{|\epsilon|}}{R-2\sqrt{|\epsilon|}}\frac{|\xi|}{n}\ $ and hence
$$\ K\cdot(\tfrac{|\xi|}{n}+2\sqrt{|\epsilon|})\leq \tfrac{K|\xi|}{n}\big(1+\tfrac{2\sqrt{|\epsilon|}}{R-2\sqrt{|\epsilon|}}\big)\leq\tfrac{1}{n}\cdot\tfrac{|\xi|}{R-2\sqrt{|\epsilon|}}.$$
\end{itemize}
\begin{align*}
\sum_{n=1}^{+\infty} |b_n(\xi,&\sqrt\epsilon)| = \sum_{n=0}^{N-1} |b_{n+1}(\xi,\sqrt\epsilon)|+\sum_{n=N}^{+\infty} |b_{n+1}(\xi,\sqrt\epsilon)| \\
&\leq \sum_{n=0}^{N-1} CK\tfrac{1}{n!}\left(\tfrac{|\xi|}{R-2\sqrt{|\epsilon|}}\right)^n+ CK\tfrac{1}{N!}\left(\tfrac{|\xi|}{R-2\sqrt{|\epsilon|}}\right)^N \cdot\sum_{n=N}^{+\infty} (RK)^{n-N} \\
&\leq CKe^\frac{|\xi|}{R-2\sqrt{|\epsilon|}}+CK\cdot{\Gamma\Big(\tfrac{|\xi|}{R-2\sqrt{|\epsilon|}}\Big)}^{-1}\left(\tfrac{|\xi|}{R-2\sqrt{|\epsilon|}}\right)^\frac{|\xi|}{R-2\sqrt{|\epsilon|}}\cdot\tfrac{1}{1-RK} \\
&= e^\frac{|\xi|}{R-2\sqrt{|\epsilon|}}\cdot \left(CK+\tfrac{CK}{1-RK}\sqrt{\tfrac{|\xi|}{2\pi(R-2\sqrt{|\epsilon|})}}+O\Big(\sqrt{\tfrac{R-2\sqrt{|\epsilon|}}{|\xi|}}\Big)  \right), 
\end{align*}
using \eqref{eq:BL-104} and the Stirling formula: $\,\Gamma(z)^{-1}=\left(\frac{e}{z}\right)^z\cdot\left(\sqrt{\frac{z}{2\pi}}+O\big(\frac{1}{\sqrt z}\big)\right),\ z\to+\infty$.

\smallskip
\textit{5)} From the definition.
\qed\end{proof}


There is also a converse statement to point \textit{1)} of Proposition~\ref{proposition:BL-4}.

\begin{proposition}
Let $\epsilon\neq 0$ and $\alpha\in\,]\arg\sqrt\epsilon,\arg\sqrt\epsilon+\pi[$.
If $\phi(\xi)$ is an analytic function in  a strip $\Omega_\alpha$ \eqref{eq:BL-4}, with $a+b>0$, 
such that it has a finite norm $|\phi|_{2c\sqrt\epsilon+e^{i\alpha}\R}^{\Lambda, \lambda_\alpha-\Lambda}$ on each line $2c\sqrt\epsilon+e^{i\alpha}\R\subseteq\Omega_\alpha$,
for some $0\leq \Lambda <\lambda_\alpha(\!\sqrt\epsilon):=-\Re\big(\frac{e^{i\alpha}\pi i}{2\sqrt\epsilon})$,
then the \emph{unfolded Laplace transform} of $\phi$
\begin{equation}
\Cal L_\alpha[\phi](x,\sqrt\epsilon)=\int_{2c\sqrt\epsilon-\infty e^{i\alpha}}^{2c\sqrt\epsilon+\infty e^{i\alpha}} \phi(\xi) e^{-t(x,\epsilon)\xi}\,d\xi,\quad c\in\,]\!-\!b,a[
\end{equation}
is analytic on the domain $\check{\mathbf{X}}_\alpha^+(\Lambda,\sqrt\epsilon)$, and is uniformly $o(|x\!-\!\sqrt\epsilon|^{a_1}|x\!+\!\sqrt\epsilon|^{b_1})$ for any $a_1<a,\ b_1<b$, on any
sub-domain $\check{\mathbf{X}}_\alpha^+(\Lambda_1,\sqrt\epsilon)$, $\Lambda_1>\Lambda$.
\end{proposition}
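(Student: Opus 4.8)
The plan is to obtain the analyticity and the growth estimates for $\Cal L_\alpha[\phi]$ by transporting everything into the time coordinate $t=t(x,\epsilon)$, where the Laplace transform becomes a standard two-sided Laplace transform on the line $2c\sqrt\epsilon+e^{i\alpha}\R$, and then invoking the classical results of Section~\ref{sec:BL-2} (Corollary~\ref{corollary:BL-1} in particular). The key observation is that, under $x\mapsto t$, the domains $\check X_\alpha^\pm(\Lambda,\sqrt\epsilon)$ are exactly the open slanted strips in the $t$-plane bounded by the lines $\Re(e^{i\alpha}t)=\Lambda$ and $\Re(e^{i\alpha}t)=-\Re\big(\tfrac{e^{i\alpha}\pi i}{\sqrt\epsilon}\big)-\Lambda$ (resp. their reflections), i.e. the strips $T_\alpha^{A,B}$ with $A=e^{-i\alpha}\Lambda$, $B=-\tfrac{e^{i\alpha}\pi i}{\sqrt\epsilon}-e^{-i\alpha}\Lambda$ appearing in the hypothesis. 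With the normalization of $\xi$ as $\xi\in 2c\sqrt\epsilon+e^{i\alpha}\R$, the hypothesis $|\phi|_{2c\sqrt\epsilon+e^{i\alpha}\R}^{A,B}<+\infty$ together with Lemma~\ref{lemma:BL-1} gives $\|\phi\|_{2c\sqrt\epsilon+e^{i\alpha}\R}^{A+D,B-D}<+\infty$ for small $D$; Corollary~\ref{corollary:BL-1} then yields absolute convergence and analyticity of $\Cal L_\alpha[\phi]$ as a function of $t$ on the open strip $T_\alpha^{A,B}$, i.e. precisely on $\check X_\alpha^\pm(\Lambda,\sqrt\epsilon)$ after pulling back by $t(\cdot,\epsilon)$, which is a biholomorphism onto its image.

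First I would make the change of variables explicit: using the formulas $x-\sqrt\epsilon=2\sqrt\epsilon\,\tfrac{e^{-2\sqrt\epsilon t}}{1-e^{-2\sqrt\epsilon t}}$ and $x+\sqrt\epsilon=2\sqrt\epsilon\,\tfrac{1}{1-e^{-2\sqrt\epsilon t}}$ (for the $+$ branch, with the analogous $\mp\pi i$ shift for the $-$ branch as in Definition~\ref{definition:BL-4}), one reads off the asymptotic behavior of $x\mp\sqrt\epsilon$ as $t$ tends to the two ends of the strip. Concretely, as $\Re(e^{i\alpha}t)\to \Lambda^+$ (the boundary line $\Re(e^{i\alpha}t)=\Lambda$), one has $t\to 0$ along that boundary, hence $x\to\sqrt\epsilon$; and $x-\sqrt\epsilon\sim -\tfrac{1}{t}$ up to a bounded nonvanishing factor, so $|x-\sqrt\epsilon|$ is controlled by $|t|^{-1}$, which in turn is controlled by the distance to the relevant boundary line — this is the mechanism converting "$\Re(e^{i\alpha}t)$ close to $\Lambda$" into "$x$ close to $\sqrt\epsilon$" and giving the power-type vanishing. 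Symmetrically, as $\Re(e^{i\alpha}t)\to \big(-\Re(\tfrac{e^{i\alpha}\pi i}{\sqrt\epsilon})-\Lambda\big)^-$, one has $1-e^{-2\sqrt\epsilon t}\to 0$, hence $x\to -\sqrt\epsilon$, with $|x+\sqrt\epsilon|$ controlled by the distance to that boundary line.

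Next I would extract the decay rates. The integrand $\phi(\xi)e^{-t\xi}$ with $\xi=2c\sqrt\epsilon+e^{i\alpha}s$ has, by the norm hypothesis, $|\phi(\xi)|\le |\phi|^{A,B}_{2c\sqrt\epsilon+e^{i\alpha}\R}\cdot\big(|e^{A\xi}|+|e^{B\xi}|\big)$; multiplying by $|e^{-t\xi}|$ and integrating, the resulting bound on $|\Cal L_\alpha[\phi](x,\sqrt\epsilon)|$ behaves, as $\Re(e^{i\alpha}t)\to\Lambda_1^+$ on the sub-domain $\check X_\alpha^\pm(\Lambda_1,\sqrt\epsilon)$ with $\Lambda_1>\Lambda$, like $\big(\Re(e^{i\alpha}t)-\Lambda\big)^{-1}$ times a constant — more precisely, estimating the tail of the integral shows it is $O\big((\Re(e^{i\alpha}t)-A')^{-1}\big)$ for any $A'$ with $\Re(e^{i\alpha}A)<\Re(e^{i\alpha}A')<\Re(e^{i\alpha}t)$, and taking $A'$ with real part approaching $\Re(e^{i\alpha}t)$ gives, after reinserting the relation $|t|^{-1}\lesssim |x-\sqrt\epsilon|$, a bound of the form $o(|x-\sqrt\epsilon|^{a_1})$ for any $a_1<a$; symmetrically one gets $o(|x+\sqrt\epsilon|^{b_1})$ near the other boundary. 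On the part of $\check X_\alpha^\pm(\Lambda_1,\sqrt\epsilon)$ bounded away from both boundary lines, $\Cal L_\alpha[\phi]$ is simply bounded (again by Corollary~\ref{corollary:BL-1}, the "tends to $0$ at infinity" clause handling the unbounded direction of the strip), and both $|x\mp\sqrt\epsilon|$ stay bounded away from $0$ there, so the claimed estimate holds trivially. Splicing these three regimes, with a partition-of-unity-free direct comparison, gives the uniform estimate $\Cal L_\alpha[\phi]=o(|x-\sqrt\epsilon|^{a_1}|x+\sqrt\epsilon|^{b_1})$ on $\check X_\alpha^\pm(\Lambda_1,\sqrt\epsilon)$.

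The main obstacle I anticipate is the bookkeeping in the strict-inequality (little-$o$, and $a_1<a$ rather than $a_1=a$) part: one must be careful that the $\sup$ defining $|\phi|^{A,B}$ over all lines inside $\Omega_\alpha$ does not blow up as the line approaches the boundary of $\Omega_\alpha$, and that shrinking from $\Omega_\alpha$ to the sub-strip $T_\alpha^{A,B}$ with the $\Lambda$-margin genuinely buys the decay. This is exactly why the hypothesis is phrased with a \emph{fixed} $\Lambda<-\Re\big(\tfrac{e^{i\alpha}\pi i}{2\sqrt\epsilon}\big)$ and why the conclusion is on $\check X_\alpha^\pm(\Lambda_1,\sqrt\epsilon)$ with $\Lambda_1>\Lambda$: the gap $\Lambda_1-\Lambda$ provides the room to run Lemma~\ref{lemma:BL-1} and to pass from the pointwise $|\phi|$-norm to an integrable $\|\phi\|$-norm on a slightly smaller strip, at the cost of an arbitrarily small loss in the exponents $a,b$. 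Keeping track of which boundary line of the strip corresponds to which of the two singular points $\pm\sqrt\epsilon$ — and the fact that on the $-$ branch the roles and the $e^{\mp\pi i/(2\sqrt\epsilon)}$ prefactor are swapped — is the only genuinely delicate point; everything else is a routine transcription of Corollary~\ref{corollary:BL-1} and Lemma~\ref{lemma:BL-1} through the coordinate change $x\leftrightarrow t$.
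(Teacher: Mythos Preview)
Your overall strategy --- pass to the $t$-coordinate, recognize $\check X_\alpha^\pm(\Lambda,\sqrt\epsilon)$ as the open strip $T_\alpha^{A,B}$, and invoke Corollary~\ref{corollary:BL-1} --- is exactly the paper's approach; its proof is the single sentence ``This is a reformulation of Corollary~\ref{corollary:BL-1}, which also implies that $\Cal L_\alpha[\phi]$ is $o\!\left(\left|\frac{x-\sqrt\epsilon}{x+\sqrt\epsilon}\right|^c\right)$ for any $-b<c<a$.''

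However, your execution of the asymptotic estimate contains a genuine geometric confusion. Approaching the singular points $x=\pm\sqrt\epsilon$ does \emph{not} correspond to $t$ approaching the boundary lines $\Re(e^{i\alpha}t)=\Lambda$ or $\Re(e^{i\alpha}t)=-\Re\big(\tfrac{e^{i\alpha}\pi i}{\sqrt\epsilon}\big)-\Lambda$ of the strip; those boundary lines correspond to $x$ approaching the outer boundary of the domain $\check X_\alpha^\pm(\Lambda,\sqrt\epsilon)$ (near $x=\infty$, since $t=0$ and $t=-\tfrac{\pi i}{\sqrt\epsilon}$ both sit on the excluded discs and correspond to $x=\infty$). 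Rather, $x\to\pm\sqrt\epsilon$ corresponds to $t\to\infty$ \emph{along} the strip, in the two directions $\pm ie^{-i\alpha}$. Indeed, from $\tfrac{x-\sqrt\epsilon}{x+\sqrt\epsilon}=e^{-2\sqrt\epsilon\,t}$ one sees $|x-\sqrt\epsilon|$ decays exponentially in $t$ (not like $|t|^{-1}$) as $t\to+\infty\cdot ie^{-i\alpha}$, and symmetrically for $x+\sqrt\epsilon$.

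Once the geometry is straightened out, the little-$o$ estimate drops out immediately: for any $c\in(-b,a)$ shift the integration line to $2c\sqrt\epsilon+e^{i\alpha}\R$ and factor
\[
\Cal L_\alpha[\phi](x,\sqrt\epsilon)=e^{-2c\sqrt\epsilon\,t}\int_{e^{i\alpha}\R}\phi(2c\sqrt\epsilon+\zeta)\,e^{-t\zeta}\,d\zeta
=\left(\tfrac{x-\sqrt\epsilon}{x+\sqrt\epsilon}\right)^{\! c}\cdot F_c(t),
\]
where $F_c(t)\to 0$ uniformly on closed substrips by the ``tends to $0$ at infinity'' clause of Corollary~\ref{corollary:BL-1}. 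Taking $c=a_1$ near $x=\sqrt\epsilon$ (where $|x+\sqrt\epsilon|$ is bounded above and below) and $c=-b_1$ near $x=-\sqrt\epsilon$ gives the claimed $o(|x-\sqrt\epsilon|^{a_1}|x+\sqrt\epsilon|^{b_1})$ on $\check X_\alpha^\pm(\Lambda_1,\sqrt\epsilon)$. Your discussion of bounds like $(\Re(e^{i\alpha}t)-\Lambda)^{-1}$ and of Lemma~\ref{lemma:BL-1} is aimed at the wrong limit and is not needed.
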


\begin{proof}
This is a reformulation of Corollary~\ref{corollary:BL-1}, 
which also implies that $\Cal L_\alpha[\phi]$ is $o\!\left(\left|\frac{x-\sqrt\epsilon}{x+\sqrt\epsilon}\right|^c\right)$ for any $-b<c<a$.
\qed\end{proof}

\begin{definition}[Borel transform of $x$]\label{def:widetildex}
We know form Proposition~\ref{proposition:BL-4} that for $\sqrt\epsilon\neq0$, $\Cal B_\alpha^\pm[x+\!\sqrt\epsilon]=\chi_\alpha^\pm$ in the strip in between $-2\sqrt\epsilon$ and $0$, 
while $\Cal B_\alpha^\pm[x-\!\sqrt\epsilon]=\chi_\alpha^\pm$ in the strip in between $0$ and $2\sqrt\epsilon$,
and the function $\chi_\alpha^\pm$ has a simple pole at 0 with residue $\res_0 \chi_\alpha^\pm=\frac{\sqrt\epsilon}{\pi i}$, therefore
$$\Cal B_\alpha^\pm[x+\!\sqrt\epsilon]-\Cal B_\alpha^\pm[x-\!\sqrt\epsilon]=2\sqrt\epsilon\,\delta_0$$
in the sense of distributions (see Section~\ref{sec:BL-distributions}), where $\delta_0$ is the Dirac distribution (identity of convolution). 
Hence one can define the distribution 
$$\Cal B_\alpha^\pm[x]:=\Cal B_\alpha^\pm[x-\!\sqrt\epsilon]+\sqrt\epsilon\,\delta_0=\Cal B_\alpha^\pm[x+\!\sqrt\epsilon]-\sqrt\epsilon\,\delta_0.$$
Correspondingly, the convolution of $\Cal B_\alpha^\pm[x]$ with a function $\phi$, analytic on an open strip in direction $\alpha$, is then defined as 
\begin{align*}
[\Cal B_\alpha^\pm[x]*\phi]_\alpha(\xi,\sqrt\epsilon)
&=\int_{c_1+e^{i\alpha}\R}\!\!\!\!\phi(\xi\!-\!\zeta)\,\chi_\alpha^\pm(\zeta,\sqrt\epsilon)\, d\zeta+\sqrt\epsilon\,\phi(\xi),\quad c_1\in\,]0,2\sqrt\epsilon[ \\
&=\int_{c_2+e^{i\alpha}\R}\!\!\!\!\phi(\xi\!-\!\zeta)\,\chi_\alpha^\pm(\zeta,\sqrt\epsilon)\, d\zeta-\sqrt\epsilon\,\phi(\xi),\quad c_2\in\,]\!-\!2\sqrt\epsilon,0[.
\end{align*}
\end{definition}


\subsection{Remark on Fourier expansions}

For $\sqrt\epsilon\neq 0$, we have defined the Borel transformations $\Cal B_\alpha^\pm$ for directions transverse to $\sqrt\epsilon\,\R$: 
in fact, we have restricted ourselves to $\alpha\in\,]\arg\sqrt\epsilon,\arg\sqrt\epsilon+\pi[$.
Let us now take a look at the direction $\arg\sqrt\epsilon$. 
So instead of integrating on a line $c^\pm+ i e^{-i\alpha}\R$ in the $t$-coordinate as in Figure~\ref{figure:BL-1}, this time we shall consider an integrating path
$c_R+\frac{i}{\sqrt\epsilon}\R$ in the half plane $\Re(e^{i\arg\sqrt\epsilon}t)>\Lambda$ \, 
(resp. $c_L+\frac{i}{\sqrt\epsilon}\R$ in the half plane $\Re(e^{i\arg\sqrt\epsilon}t)<-\Lambda\,$),
see  Figure~\ref{figure:BL-3a}.

\begin{figure}[ht]
\centering
\includegraphics[width=0.5\textwidth]{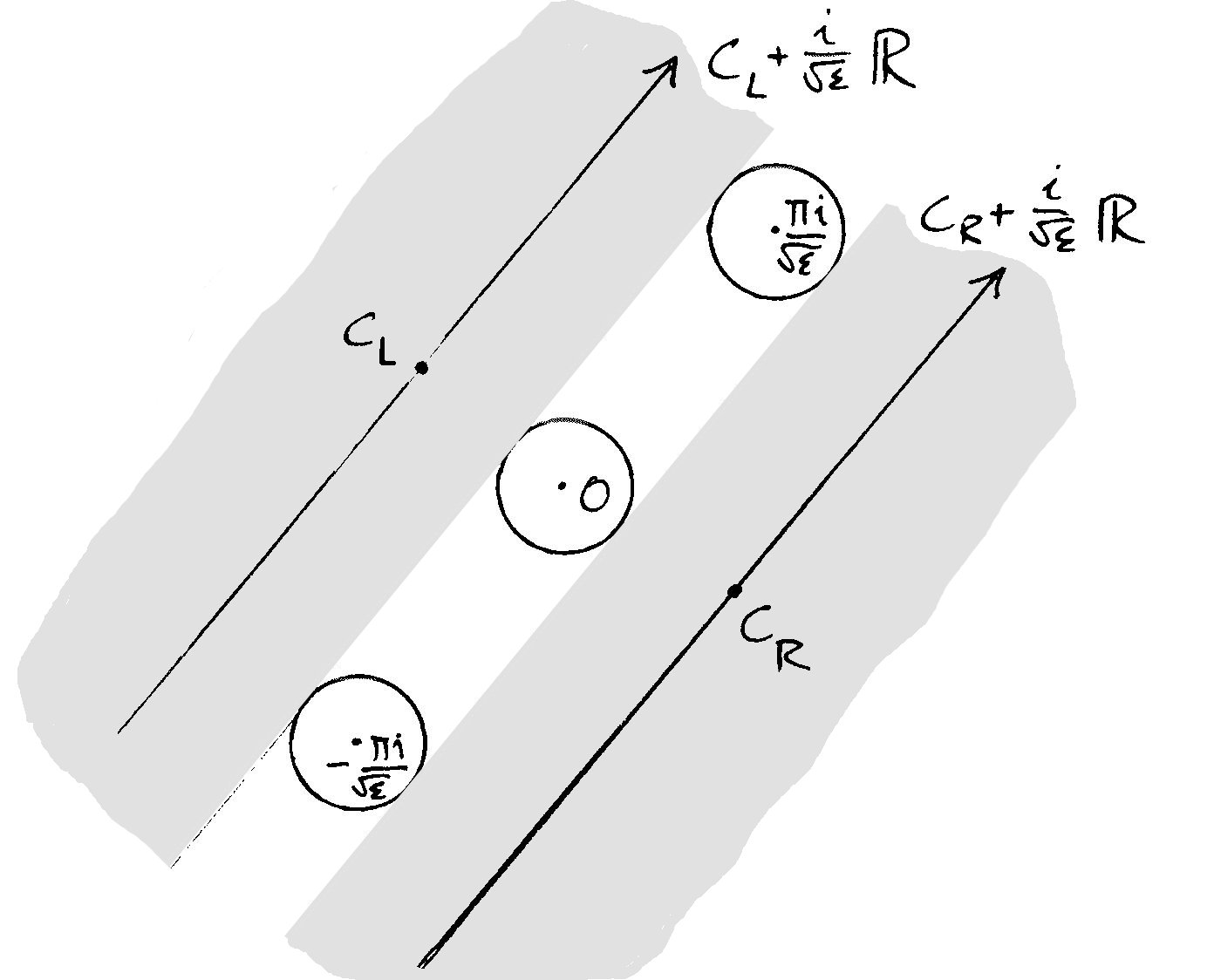}
\caption{The integration paths $c_\bullet+\frac{i}{\sqrt\epsilon}\R$  ($\bullet=L,R$)  in the time $t$-coordinate.}
\label{figure:BL-3a}
\end{figure}

If $f$ is analytic on a neighborhood of $x=\sqrt\epsilon$ (resp. $x=-\sqrt\epsilon$), then
the lifting of $f$ to the time coordinate, $f(x(t,\epsilon))$, is $\frac{\pi i}{\sqrt\epsilon}$-periodic in the half-plane $\Re(e^{i\arg\sqrt\epsilon}\,t)>\Lambda$
(resp. $\Re(e^{i\arg\sqrt\epsilon}t)<-\Lambda\,$) for $\Lambda$ large enough, and can be written as a sum of
its Fourier series expansion:
\begin{align*}
f(x)&=\sum_{n=0}^{+\infty} a_n^R\, e^{-2n\sqrt\epsilon\,t(x)}= \sum_{n=0}^{+\infty} a_n^R\cdot\!\left(\tfrac{x-\sqrt\epsilon}{x+\sqrt\epsilon}\right)^n,\\
\text{resp.}\quad  f(x)&=\sum_{n=0}^{+\infty} a_{n}^L\, e^{2n\sqrt\epsilon\,t(x)}=\sum_{n=0}^{+\infty} a_n^L\cdot\! \left(\tfrac{x+\sqrt\epsilon}{x-\sqrt\epsilon}\right)^n.
\end{align*}
The Borel transform \eqref{eq:BL-B} of $f(x(t,\epsilon))$ on the line $c_R+\frac{i}{\sqrt\epsilon}\R$ (resp. $c_L+\frac{i}{\sqrt\epsilon}\R$) is equal to the formal sum of distributions
\begin{align*}
\Cal B^R[f](\xi,\sqrt\epsilon):=\tfrac{1}{2\pi i}\int_{c_R-\frac{i}{\sqrt\epsilon}\infty}^{c_R+\frac{i}{\sqrt\epsilon}\infty} f(x(t,\epsilon))\,e^{t\xi}dt 
&=\sum_{n=0}^{+\infty} a_n^R\,\delta_{2n\sqrt\epsilon}(\xi),\\
\text{resp.}\quad 
\Cal B^L[f](\xi,\sqrt\epsilon):=\tfrac{1}{2\pi i}\int_{c_L-\frac{i}{\sqrt\epsilon}\infty}^{c_L+\frac{i}{\sqrt\epsilon}\infty} f(x(t,\epsilon))\,e^{t\xi}dt
&=\sum_{n=0}^{+\infty} a_n^L\,\delta_{-2n\sqrt\epsilon}(\xi).
\end{align*}
These transformations were studied by Sternin and Shatalov in \cite{SS}.
One can connect the coefficients $a_n^\bullet$ of these expansions to residues of the unfolded Borel transforms $\Cal B_\alpha^\pm$, $\arg\sqrt\epsilon<\alpha<\arg\sqrt\epsilon+\pi$,
\begin{align*}
 a^R_0=f(\!\sqrt\epsilon), \qquad a_n^R&=2\pi i\,\res_{2n\sqrt\epsilon}\,\Cal B_\alpha^\pm[f],\quad n\in\N_{>0},\\
 a^L_0=f(\!-\sqrt\epsilon),\qquad a_n^L&=2\pi i\,\res_{-2n\sqrt\epsilon}\,\Cal B_\alpha^\pm[f],\quad n\in\N_{>0},
\end{align*}
(the residues of $\Cal B_\alpha^+[f]$ and $\Cal B_\alpha^-[f]$ at the points $\xi\in 2\sqrt\epsilon\,\Z$ are equal).

\begin{remark}
Without providing details, let us remark that one could follow \cite{SS} and apply these Borel transformations $\Cal B^R$ (resp. $\Cal B^L$) to the system \eqref{eq:BL-cm} 
to show the convergence of its unique local analytic solution at $x=\sqrt\epsilon\neq 0$ (resp. $x=-\sqrt\epsilon\neq 0$)
to a Borel sum in direction $\arg\sqrt\epsilon$ of the formal solution $\hat y_0(x)$ of the limit system, when $\sqrt\epsilon\to 0$ radially in a sector not containing any eigenvalue of $M$, as stated in Theorem~\ref{theorem:BL-localsolutions}.
\end{remark}


\section{Solution to the equation \eqref{eq:BL-cm} in the Borel plane}\label{sec:BL-4}

We will use the unfolded Borel transformation ${\Cal B_\alpha^\pm}$ to transform the equation 
\begin{equation*}
\leqno\eqref{eq:BL-cm}: \qquad  (x^2\!-\epsilon)\frac{dy}{dx}=My+f(x,\epsilon,y)
\end{equation*}
to a convolution equation in the Borel plane, and study its solutions there.
We write the function  $\,f\,$ \eqref{eq:BL-f} as
\begin{equation}\label{eq:BL-pcm}
 f(x,\epsilon,y)=\sum_{|l|\geq 0}f_l(x,\epsilon)\,y^l,
\end{equation}
where $y^l:=y_1^{l_1}\cdot\ldots\cdot y_m^{l_m}$ for each multi-index $l=(l_1,\ldots,l_m)\in\N^m$, $|l|=l_1+\ldots+l_m$,
and $f_0(x,\epsilon)=O(x^2\!-\epsilon)$, $f_l(x,\epsilon)=O(x)+O(\epsilon)$, for $|l|=1$.

Let a vector variable $\upsilon=\upsilon(\xi,\sqrt\epsilon)$ 
correspond to the Borel transform ${\Cal B_\alpha^\pm}[y](\xi,\sqrt\epsilon)$, with $\alpha\in\,]\arg\sqrt\epsilon,\arg\sqrt\epsilon+\pi[$ if $\sqrt\epsilon\neq 0$. 
Then the equation \eqref{eq:BL-cm} is transformed to a convolution equation in the Borel plane 
\begin{equation}\label{eq:BL-bpcm}
 \xi\upsilon =M\upsilon+ \sum_{|l|\geq 0} \Cal B_\alpha^\pm[f_l] *\upsilon^{\,*l},
\end{equation}
where 
$\,\upsilon^{\,*l}:=\upsilon_{1}^{\,*l_1}*\ldots *\upsilon_{m}^{\,*l_m}\,$ is the convolution product of components of $\upsilon$, each taken $l_i$-times, the convolutions being done in the direction $\alpha$,
$\Cal B_\alpha^\pm[f_l](\xi,\sqrt\epsilon)$ is a sum of an analytic function
and a multiple of the Dirac distribution $\delta_0(\xi)$.
In Proposition~\ref{proposition:BL-3}, we will find a unique analytic solution $\upsilon^\pm(\xi,\sqrt\epsilon)$ of the convolution equation \eqref{eq:BL-bpcm} as a fixed point of the operator
\begin{equation}\label{eq:BL-operatorG}
 \Cal G^\pm[\upsilon](\xi,\sqrt\epsilon):=\big(\xi I-M\big)^{-1}\! \sum_{|l|\geq 0} \left[\Cal B^\pm[f_l]*\upsilon^{\,*l}\right]\!(\xi,\sqrt\epsilon)
\end{equation}
on a domain $\Omega(\!\sqrt\epsilon)$ in the $\xi$-plane, obtained as union of strips $\Omega_\alpha(\!\sqrt\epsilon)$ 
of continuously varying direction $\alpha$, passing in between the points $-2\sqrt\epsilon$ and $2\sqrt\epsilon$,
that stay away from the eigenvalues of the matrix $M$ (see Figure~\ref{figure:BL-5}).
In general, several ways of choosing such a domain $\Omega(\sqrt\epsilon)$ are possible, depending on its position relative with respect to the eigenvalues of $M$.
Different choices of the domain $\Omega(\sqrt\epsilon)$ will, in general, lead to different solutions $\upsilon^\pm(x,\sqrt\epsilon)$ of \eqref{eq:BL-bpcm}, as shown in Example~\ref{example:BL-2} below.

\begin{figure}[hbt]
\hskip-10pt
\includegraphics[width=1.15\textwidth]{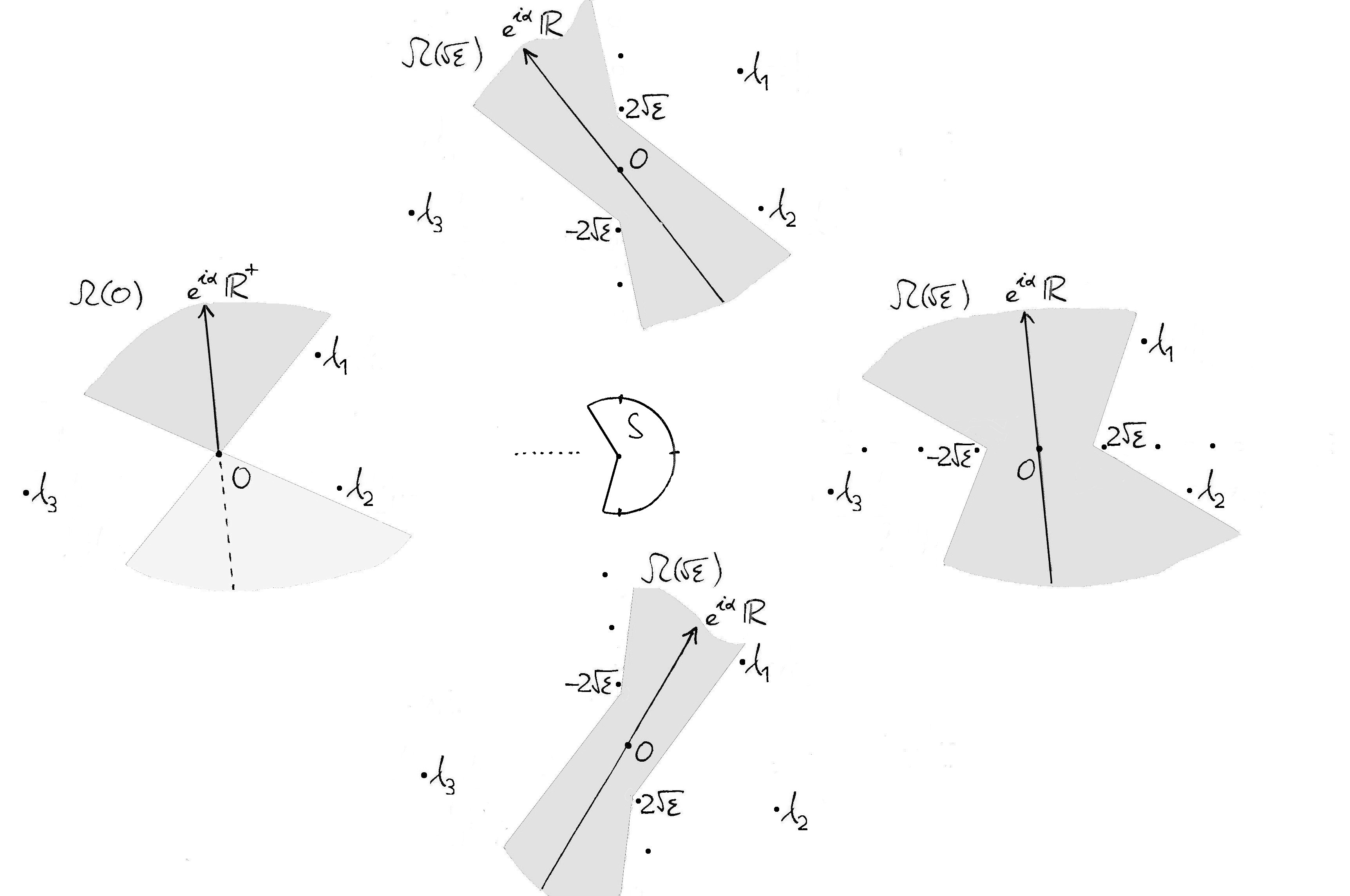}
\caption{The regions $\Omega(\!\sqrt\epsilon)$ and the eigenvalues $\lambda_1,\ldots,\lambda_m$ (here $m=3$) of $M$ in the $\xi$-plane according to $\sqrt\epsilon\in S$, together with integration paths $e^{i\alpha}\R$ of the unfolded Laplace transformation $\Cal L_\alpha$.}
\label{figure:BL-5}
\end{figure}


\begin{definition}[Family of regions $\Omega(\!\sqrt\epsilon)$ in the Borel plane] \label{definition:BL-5}
Let the two directions $\beta_1<\beta_2$ and an arbitrarily small angle $0<\eta<\frac{1}{4}(\beta_2-\beta_1)$
be as in Definition~\ref{definition:BL-X}, 
and let $\rho>0$ be small enough so that for $|\sqrt\epsilon|<\rho$ none of the closed strips
\begin{equation}\label{eq:BL-Omega_alpha}
 \Omega_\alpha(\!\sqrt\epsilon)=\bigcup_{c\in[-\frac{3}{2}\sqrt\epsilon,\frac{3}{2}\sqrt\epsilon]}c+e^{i\alpha}\R,
\end{equation}
with $\alpha\in\,]\beta_1+\eta,\beta_2-\eta[$, contains any eigenvalue of $M_{\pm\sqrt\epsilon}$.
We define a family of regions $\Omega(\!\sqrt\epsilon)$ in the $\xi$-plane depending parametrically on $\sqrt\epsilon\in S$ \eqref{eq:BL-S} as
\begin{equation*}
\Omega(\!\sqrt\epsilon):=\bigcup_{\alpha} \Omega_\alpha(\!\sqrt\epsilon), \qquad\alpha\text{ as in \eqref{eq:BL-alpha}},
\end{equation*}
and denote 
\begin{equation}\label{eq:BL-Omega}
\Omega:=\coprod_{\sqrt\epsilon\in S}\Omega(\!\sqrt\epsilon) 
\end{equation}
their union in the $(\xi,\sqrt\epsilon)$-space.

Let $\rho$, $\eta>0$ be as in the definition of $S$ \eqref{eq:BL-S}, and let $0\leq\Lambda<\frac{\pi\sin\eta}{2\rho}$.
For a vector function $\phi=(\phi_1,\ldots,\phi_m):\Omega\to\C^m$, 
we say that it is \emph{analytic on $\Omega$}, if it is continuous on $\Omega$, analytic on the interior of $\Omega$, and $\phi(\cdot,\sqrt\epsilon)$ is analytic on $\Omega(\sqrt\epsilon)$ for all
$\sqrt\epsilon\in S$.
We define the norms 
\begin{align*}
  |\phi|_{\Omega}^\Lambda&:=\max_{i}\,\sup_{\sqrt\epsilon,\,\alpha} \sup_{\ c\in \Omega_\alpha(\!\sqrt\epsilon)} 
  |\phi_i|_{c+e^{i\alpha}\R}^{\Lambda,\lambda_\alpha-\Lambda}, \\
  \|\phi\|_{\Omega}^\Lambda&:=\max_{i}\,\sup_{\sqrt\epsilon,\,\alpha} \sup_{\ c\in \Omega_\alpha(\!\sqrt\epsilon)} 
  \|\phi_i\|_{c+e^{i\alpha}\R}^{\Lambda,\lambda_\alpha-\Lambda}, 
\end{align*}
where $\sqrt\epsilon\in S$ and $\alpha$ as in \eqref{eq:BL-alpha}, i.e. such that $\Omega_\alpha(\!\sqrt\epsilon)\subset\Omega(\!\sqrt\epsilon)$,
and $\lambda_\alpha(\!\sqrt\epsilon)=-\Re\big(\frac{e^{i\alpha}\pi i}{\sqrt\epsilon}\big)$.
\end{definition}

Then the convolution of two analytic functions $\phi,\psi$ on $\Omega(\!\sqrt\epsilon)$ does not depend on the direction $\alpha$ \eqref{eq:BL-alpha},
and the norms $|\phi*\psi|_{\Omega}^\Lambda$, $\|\phi*\psi\|_{\Omega}^\Lambda$ satisfy the Young's inequalities (Lemma~\ref{lemma:BL-2}):
\begin{align}\label{eq:BL-lemma2aa}
|\phi*\psi|_{\Omega}^{\Lambda} &\leq \min\big\{\,|\phi|_{\Omega}^{\Lambda}\cdot \|\psi\|_{\Omega}^{\Lambda},\ \ \|\phi\|_{\Omega}^{\Lambda}\cdot |\psi|_{\Omega}^{\Lambda}\big\} \\[8pt]
\|\phi*\psi\|_{\Omega}^{\Lambda} &\leq \|\phi\|_{\Omega}^{\Lambda}\cdot \|\psi\|_{\Omega}^{\Lambda}.
\label{eq:BL-lemma2bb}
\end{align}
We extend these relations also to the Dirac distribution with mass at 0 by
setting $|\delta_0|_{\Omega}^{\Lambda}=\|\delta_0\|_{\Omega}^{\Lambda}=1$.

\medskip


\begin{proposition}[Solution to the convolution equation \eqref{eq:BL-bpcm}]\label{proposition:BL-3}
Suppose that the vector function $f(x,\epsilon,y)$ in the equation \eqref{eq:BL-cm} are analytic for 
$$x\in \mathbf{X}(\Lambda_1,\sqrt\epsilon),\quad \sum_{i=1}^m|y_i|<\tfrac{1}{L_1},\quad |\sqrt\epsilon|<\rho_1,\quad\text{for }\ \Lambda_1, L_1,\rho_1>0.$$
Then there exists $\Lambda>\Lambda_1$, $0<\rho\leq\rho_1$, and a constant  $c>0$, such that the operator 
$\Cal G^+: \phi(\xi,\sqrt\epsilon)\mapsto\Cal G^+[\phi](\xi,\sqrt\epsilon)$ \eqref{eq:BL-operatorG} 
is well-defined and contractive on the space
$$\{\phi:\Omega\to\C^m\mid \phi \text{ is analytic on } \Omega,\ \|\phi\|_\Omega^{\Lambda}\leq c,\ |\phi|_\Omega^{\Lambda}<+\infty\}$$
with respect to both the $\|\cdot\|_\Omega^\Lambda$-norm and the $|\cdot|_\Omega^\Lambda$-norm. 
\textbf{Hence the equation $\Cal G^+[\upsilon^+]=\upsilon^+$ possesses a unique analytic solution 
$\upsilon^+(\xi,\sqrt\epsilon)$ on $\Omega$,  satisfying 
$\|\upsilon^+\|_\Omega^\Lambda\leq c$ and $|\upsilon^+|_\Omega^\Lambda<+\infty$.}
Similarly, the vector function $\upsilon^-(\xi,\sqrt\epsilon):=e^{\frac{\xi\pi i}{\sqrt\epsilon}}\cdot\upsilon^+(\xi,\sqrt\epsilon)$ is a unique analytic solution  of the equation 
$\Cal G^-[\upsilon^-]=\upsilon^-$ on $\Omega$.
\end{proposition}

While for $\epsilon=0$ the germ of the solutions $\upsilon^+(\cdot,0)$ at $\xi=0$ equals to the Borel transform of the unique formal solution $\hat y_0(x)=\hat y(x,0)$ \eqref{eq:BL-formalsolution}, and therefore it is independent 
of the domain $\Omega(0)$,  
this is no longer true when one unfolds. The reason is that the convolution is no longer defined locally, but involves integration on a whole line $c+e^{i\alpha}\R$.
As the following example shows, the analytic solutions $\upsilon^\pm$ of \eqref{eq:BL-bpcm} therefore depend in general on the position of the region $\Omega(\sqrt\epsilon)$ with respect to the eigenvalues of $M$, and are not analytic extensions one of another. 
We'll see later (Corollary~\ref{corollary:BL-3}) that their difference is  exponentially small in $\sqrt\epsilon$ for each fixed small $\xi$.

\begin{example}\label{example:BL-2}
Let $u$ satisfy
\begin{equation}\label{eq:BL-example0}
 (x^2\!-\epsilon)\frac{du}{dx}=u+(x^2\!-\epsilon),
\end{equation}
and let $y=(x^2\!-\epsilon)u\,$; it satisfies
\begin{equation}\label{eq:BL-example}
 (x^2\!-\epsilon)\frac{dy}{dx}=y+2xy+(x^2\!-\epsilon)^2.
\end{equation}
The Borel transform of the equation \eqref{eq:BL-example0} is
$$\xi\phi_\alpha^\pm=\phi_\alpha^\pm+\xi\chi_\alpha^\pm,\quad \phi_\alpha^\pm=\Cal B_\alpha^\pm[u]$$
therefore $\phi_\alpha^\pm(\xi,\sqrt\epsilon)=\frac{\xi}{\xi-1}\chi_\alpha^\pm(\xi,\sqrt\epsilon)$, which is independent of the direction $\alpha$. 
This is no longer true for the solution $\upsilon_\alpha^\pm=\phi_\alpha^\pm*\Cal B_\alpha^\pm[x^2\!-\epsilon]$ of
the Borel transform of the equation \eqref{eq:BL-example}
$$\xi\upsilon_\alpha^\pm=\upsilon_\alpha^\pm+2\Cal B^\pm[x]\!*\upsilon_\alpha^\pm+\chi_\alpha^\pm\cdot (\xi^3\!-4\epsilon\xi).$$
If, for instance, $\Im(\sqrt\epsilon)<0$, and $\arg\sqrt\epsilon<\alpha_1<0<\alpha_2<\arg\sqrt\epsilon+\pi$,
then the strips $\Omega_{\alpha_1}(\!\sqrt\epsilon)$, $\Omega_{\alpha_2}(\!\sqrt\epsilon)$ \eqref{eq:BL-Omega_alpha} in directions $\alpha_1$, $\alpha_2$, are separated by the point $\xi=1$,
and one easily calculates that for $\xi\in\Omega_{\alpha_1}(\!\sqrt\epsilon)\cap\Omega_{\alpha_2}(\!\sqrt\epsilon)$
$$\upsilon_{\alpha_1}^\pm(\xi,\sqrt\epsilon)-\upsilon_{\alpha_2}^\pm(\xi,\sqrt\epsilon)=(\xi-1)\,\chi_\alpha^\pm(1,\sqrt\epsilon)\,\chi_\alpha^\pm(\xi-1,\sqrt\epsilon),$$
i.e. the two solutions $\upsilon_{\alpha_1}^\pm$, $\upsilon_{\alpha_2}^\pm$ differ near $\xi=0$ by a term that is exponentially flat in $\sqrt\epsilon$.
\end{example}

To prove Proposition~\ref{proposition:BL-3} we will make use of the following technical lemmas which will allow us to estimate the norms of $\Cal G^+[\phi]$.

\begin{lemma}\label{lemma:BL-3b}
There exists a constant  $C=C(\Lambda_1,\eta)>0$ such that, 
if $h\in\Cal O(\mathbf{X}(\Lambda_1,\sqrt\epsilon))$, $|\sqrt\epsilon|<\rho$, and $\Lambda_1<\Lambda<\frac{\pi\sin\eta}{2\rho}$ (where $\eta,\rho>0$ are as in \eqref{eq:BL-alpha}, \eqref{eq:BL-S}), then
$$|\Cal B_\alpha^+[(x^2\!-\epsilon)h]|_{\Omega}^\Lambda\leq C\rho\sup_{x\in \mathbf{X}(\Lambda_1,\sqrt\epsilon)} |h(x)|.$$ 
\end{lemma}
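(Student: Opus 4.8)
The plan is to estimate the Borel transform $\Cal B_\alpha^+[(x^2-\epsilon)f]$ pointwise on each line $c+e^{i\alpha}\R\subseteq\Omega_\alpha(\sqrt\epsilon)$ and track the dependence of all constants on $\rho$. I would start from the integral representation of Definition~\ref{definition:BL-4}, i.e. $\Cal B_\alpha^+[(x^2-\epsilon)f](\xi,\sqrt\epsilon)=\tfrac{1}{2\pi i}\int_{c^+-\infty ie^{-i\alpha}}^{c^++\infty ie^{-i\alpha}}(x(t,\epsilon)^2-\epsilon)f(x(t,\epsilon))\,e^{t\xi}\,dt$, and rewrite the integrand using the change of variables back to $x$, where $dt=-\tfrac{dx}{x^2-\epsilon}$, so that $(x^2-\epsilon)f(x)\,dt=-f(x)\,dx$. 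Thus the transform becomes $-\tfrac{1}{2\pi i}\int_{\gamma_\alpha^+}f(x)\,e^{t(x,\epsilon)\xi}\,dx$ over the path $\gamma_\alpha^+$ lying inside $\check X_\alpha^+(\Lambda,\sqrt\epsilon)$, a real-time trajectory of $ie^{-i\alpha}(x^2-\epsilon)\tfrac{\partial}{\partial x}$. Since $f$ is analytic on all of $X(\Lambda_1,\sqrt\epsilon)$ and $\check X_\alpha^+(\Lambda,\sqrt\epsilon)$ projects into that neighborhood, $|f(x)|\le \sup_{X(\Lambda_1,\sqrt\epsilon)}|f|$ uniformly along $\gamma_\alpha^+$, so the whole estimate reduces to bounding $\bigl|\int_{\gamma_\alpha^+}e^{t(x,\epsilon)\xi}\,dx\bigr|$.

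Next I would pass to the $t$-coordinate to control this remaining integral. Writing $\xi=c+e^{i\alpha}\sigma$ with $\sigma\in\R$ and $t=c^++ie^{-i\alpha}\tau$ with $\tau\in\R$ on $\gamma_\alpha^+$, one computes $|e^{t\xi}|=|e^{tc}|\cdot e^{\Re(e^{i\alpha}\sigma\, t)}$, and $\Re(e^{i\alpha}t)$ is fixed $=\Re(e^{i\alpha}c^+)$ along the path, while the factor $dx=-(x^2-\epsilon)\,dt=-(x^2-\epsilon)\,ie^{-i\alpha}\,d\tau$ contributes $|x^2-\epsilon|$. Here $x^2-\epsilon=(x-\sqrt\epsilon)(x+\sqrt\epsilon)$ decays like $e^{-2\sqrt\epsilon\,|t|}$-type factors as $\tau\to\pm\infty$ (using $x\mp\sqrt\epsilon=2\sqrt\epsilon e^{\mp2\sqrt\epsilon t}/(1-e^{-2\sqrt\epsilon t})$ etc.), exactly as in the proof of Proposition~\ref{proposition:BL-4}\,(1), and this exponential decay makes the $\tau$-integral converge with a bound independent of which line $c+e^{i\alpha}\R$ inside $\Omega_\alpha(\sqrt\epsilon)$ is chosen. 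The geometric content of the norm $|\cdot|_{\Omega_\alpha(\sqrt\epsilon)}^{A_\alpha,B_\alpha}$ with $A_\alpha=e^{-i\alpha}\Lambda$, $B_\alpha=-\tfrac{\pi i}{\sqrt\epsilon}-e^{-i\alpha}\Lambda$ is precisely that $\xi$ ranges over the strip between these two values; multiplying by $|e^{-A_\alpha\xi}|+|e^{-B_\alpha\xi}|$ and taking the sup over $\xi$ converts the estimate into one uniform in $\xi$, and the constraint $\Lambda<\tfrac{\pi\sin\eta}{2\rho}$ guarantees the strip has positive width bounded below in terms of $\eta$ and the distance $-\Re(e^{i\alpha}\tfrac{\pi i}{\sqrt\epsilon})\ge \tfrac{\pi\sin\eta}{|\sqrt\epsilon|}$, which is where the angular condition $\alpha\in(\arg\sqrt\epsilon+\eta,\arg\sqrt\epsilon+\pi-\eta)$ from \eqref{eq:BL-alpha} enters.

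The crucial point, and the one requiring the most care, is extracting the factor of $\rho$ on the right-hand side. The gain of a full power of $|\sqrt\epsilon|\le\rho$ comes from the $(x^2-\epsilon)$ in front of $f$: after the change of variables this becomes the bare $dx$ (no singular $\tfrac{1}{x^2-\epsilon}$), and the length of the effective integration region in the $x$-plane — the part of $\gamma_\alpha^+$ where the exponential weight is not yet tiny — is $O(|\sqrt\epsilon|)$, since $\gamma_\alpha^+$ is a loop-like trajectory winding between $\sqrt\epsilon$ and $-\sqrt\epsilon$ inside a disc of radius comparable to $1/\Lambda$ but with the relevant arc-length scaling with the distance $2|\sqrt\epsilon|$ between the singular points (equivalently, in the $t$-picture the $\tfrac{\pi i}{\sqrt\epsilon}$-periodicity forces the decay length in $\tau$ to be $O(|\sqrt\epsilon|)$). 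Making this length estimate precise and uniform — showing $\int_{-\infty}^{+\infty}|x(c^++ie^{-i\alpha}\tau)^2-\epsilon|\cdot e^{\Re(e^{i\alpha}\sigma\cdot ie^{-i\alpha}\tau)}\,d\tau\le C\rho\,(\,|e^{A_\alpha\xi}|^{-1}+|e^{B_\alpha\xi}|^{-1}\,)^{-1}$ with $C=C(\Lambda_1,\eta)$ — is the heart of the argument; everything else (analyticity of the transform, the $V.P.$ causing no trouble since $f$ is genuinely analytic near $0$ so there is no pole on the path) follows from the properties already recorded in Section~\ref{sec:BL-3}. I would isolate this as a one-variable estimate on $|x^2-\epsilon|$ along vertical lines in $t$-space, split the $\tau$-integral at $|\tau|\sim1$, bound the near part by $C\rho$ directly and the far part by a convergent geometric-type integral whose total is again $O(\rho)$ times the appropriate exponential weight, and then take suprema over $c$, $\alpha$, $\sqrt\epsilon$ and the index $i$ to obtain $|\Cal B_\alpha^+[(x^2-\epsilon)f]|_\Omega^\Lambda\le C\rho\sup_{X(\Lambda_1,\sqrt\epsilon)}|f|$.
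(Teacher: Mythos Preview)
Your reduction is exactly the one the paper has in mind: after the change of variables $dt=-\tfrac{dx}{x^2-\epsilon}$, the factor $(x^2-\epsilon)$ cancels and one is left with $\tfrac{1}{2\pi}\int_{\gamma_\alpha^+}|f(x)|\,|e^{t(x,\epsilon)\xi}|\,|dx|$; pulling out $\sup|f|$ and writing $\xi=2c'\sqrt\epsilon+e^{i\alpha}\sigma$ with $c'\in[-\tfrac34,\tfrac34]$ turns the remaining weight into $\big|\tfrac{x-\sqrt\epsilon}{x+\sqrt\epsilon}\big|^{-c'}$ times a constant, which is precisely the integral the paper singles out in its one-line proof. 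So at the level of strategy you and the paper agree.

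Where your argument goes astray is the heuristic for the factor $\rho$. You claim that ``the length of the effective integration region in the $x$-plane \ldots\ is $O(|\sqrt\epsilon|)$'' and, equivalently, that ``the $\tfrac{\pi i}{\sqrt\epsilon}$-periodicity forces the decay length in $\tau$ to be $O(|\sqrt\epsilon|)$''. Both statements are false. In the $t$-variable the decay of $|x^2-\epsilon|=\tfrac{4|\epsilon|\,|e^{-2\sqrt\epsilon t}|}{|1-e^{-2\sqrt\epsilon t}|^2}$ along $t=C+ie^{-i\alpha}\tau$ takes place on the scale $|\tau|\sim 1/|\sqrt\epsilon|$, not $|\sqrt\epsilon|$; and the arc-length $\int_{\gamma_\alpha^+}|dx|$ is of order $\pi/\Lambda$, not $O(|\sqrt\epsilon|)$. (For $\epsilon=0$ the path is the circle $\Re(e^{i\alpha}/x)=\Lambda$ of circumference $\pi/\Lambda$, and for small $\epsilon$ it is a perturbation of this circle joining $\sqrt\epsilon$ to $-\sqrt\epsilon$.) One sees this cleanly by substituting $u=-2\sqrt\epsilon t$: the integral becomes $2|\sqrt\epsilon|\int_{\ell}\tfrac{|e^u|^{1+c}}{|1-e^u|^2}\,|du|$ over a line $\ell$ at distance $2|\sqrt\epsilon|\Lambda$ from the pole at $u=0$; the near-pole contribution is $\sim\tfrac{\pi}{2|\sqrt\epsilon|\Lambda}$, so after the prefactor the whole thing is $\sim\tfrac{\pi}{\Lambda}+O(|\sqrt\epsilon|)$. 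Thus the ``straightforward estimation'' the paper alludes to really yields a bound of size $O(1/\Lambda)$ (plus a genuinely $O(|\sqrt\epsilon|)$ remainder), which one then makes small by choosing $\Lambda$ large as is done in the proof of Proposition~\ref{prop:BL-3}; your mechanism for producing a bare $|\sqrt\epsilon|$ from the arc-length does not work.
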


\begin{proof}
Essentially, one needs to estimate the integral $\int_{\Re(e^{i\alpha}t)=\Lambda}\big|\frac{x-\sqrt\epsilon}{x+\sqrt\epsilon}\big|^c\,d|x|$, 
with $c\in[-\frac{3}{4},\frac{3}{4}]$ and $\alpha\in\,]\arg\sqrt\epsilon+\eta,\arg\sqrt\epsilon+\pi-\eta[$.
\qed\end{proof}

\begin{lemma}\label{lemma:BL-widetildex}
 Let $\phi$ be an analytic function on $\Omega$ with a finite $|\phi|_{\Omega}^\Lambda$ (resp. $\|\phi\|_{\Omega}^\Lambda$).
Then its convolution with the distribution $\Cal B^+[x]$ (Definition~\ref{def:widetildex}) is again an analytic function on $\Omega$ whose norm satisfies
\begin{align}\label{eq:BL-widtildex1}
|\Cal B^+[x]*\phi|_{\Omega}^\Lambda   &\leq |\phi|_{\Omega}^\Lambda\cdot   \left(\rho+\|\chi_\alpha^+\|_{\Omega_L}^\Lambda \right), \\
\text{resp.}\quad \|\Cal B^+[x]*\phi\|_{\Omega}^\Lambda &\leq \|\phi\|_{\Omega}^\Lambda\cdot \left(\rho+\|\chi_\alpha^+\|_{\Omega_L}^\Lambda \right),
\label{eq:BL-widtildex2}
\end{align}
where $\chi_\alpha^+$ is given in \eqref{eq:BL-chi}, $\rho$ is the radius of $S$, and
\begin{equation}\label{eq:BL-Omega_L}
 \Omega_L(\!\sqrt\epsilon)=\Omega(\!\sqrt\epsilon)\cap\big(\Omega(\!\sqrt\epsilon)\!-\!2\sqrt\epsilon\,\big),\qquad\text{for each }\ \sqrt\epsilon\in S.
\end{equation}
\end{lemma}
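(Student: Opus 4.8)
The plan is to unwind the definition of $\widetilde x^\pm$ (Definition~\ref{def:widetildex}) and then apply the generalized Young's inequality \eqref{eq:BL-lemma2aa}–\eqref{eq:BL-lemma2bb} to each of the two pieces. Recall $\widetilde x^\pm = \Cal B_\alpha^\pm[x-\!\sqrt\epsilon] + \sqrt\epsilon\,\delta_0$, where $\Cal B_\alpha^\pm[x-\!\sqrt\epsilon] = \chi_\alpha^\pm$ on the strip between $0$ and $2\sqrt\epsilon$. Hence
$$
[\widetilde x^\pm*\phi]_\alpha(\xi,\sqrt\epsilon)=\int_{c_1+e^{i\alpha}\R}\phi(\xi-s)\,\chi_\alpha^\pm(s,\sqrt\epsilon)\,ds+\sqrt\epsilon\,\phi(\xi),\qquad c_1\in(0,2\sqrt\epsilon),
$$
so $\widetilde x^\pm*\phi$ splits as $\chi_\alpha^\pm*\phi$ (a genuine convolution of functions) plus $\sqrt\epsilon\,\phi$. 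The $\sqrt\epsilon\,\phi$ term contributes at most $|\sqrt\epsilon|\,|\phi|_\Omega^\Lambda \le \rho\,|\phi|_\Omega^\Lambda$ (and similarly in the $\|\cdot\|$-norm), which is the $\rho$ in the stated bound. It remains to bound the convolution term by $|\phi|_\Omega^\Lambda\cdot\|\chi_\alpha^\pm\|_{\Omega_L}^\Lambda$.

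\textbf{Key steps.} First I would verify that $\widetilde x^\pm*\phi$ is analytic on $\Omega$: continuity and analyticity on the interior follow from differentiation under the integral sign and the absolute convergence of $\int\chi_\alpha^\pm(s)\,\phi(\xi-s)\,ds$ along lines $c_1+e^{i\alpha}\R$, using that $\chi_\alpha^\pm$ decays exponentially along such lines (being a fixed meromorphic function of $e^{\xi\pi i/\sqrt\epsilon}$ with poles only on $2\sqrt\epsilon\,\Z$, which the strip $\Omega_\alpha(\!\sqrt\epsilon)$ avoids by construction), while independence of the convolution on the choice of $\alpha$ in \eqref{eq:BL-alpha} is the remark following Definition~\ref{definition:BL-5}. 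Second, for the norm estimate: fix $\sqrt\epsilon\in S$ and a direction $\alpha$ as in \eqref{eq:BL-alpha}, and fix a line $c+e^{i\alpha}\R\subseteq\Omega_\alpha(\!\sqrt\epsilon)$. Writing $c+e^{i\alpha}\R$ as $s_1+s_2$ with $s_1$ on a line inside $\Omega_L(\!\sqrt\epsilon)$ (so that $\chi_\alpha^\pm$ is honestly defined there — this is exactly why $\Omega_L$, the shifted-and-intersected region \eqref{eq:BL-Omega_L}, appears: it is where one may place the $s$-line while keeping $\xi-s$ inside $\Omega(\!\sqrt\epsilon)$) and $s_2$ on a line inside $\Omega(\!\sqrt\epsilon)$, apply \eqref{eq:BL-lemma2aa}:
$$
|\chi_\alpha^\pm*\phi|_{c+e^{i\alpha}\R}^{A_\alpha,B_\alpha}\le \|\chi_\alpha^\pm\|_{\Omega_L(\!\sqrt\epsilon)}^{A_\alpha,B_\alpha}\cdot|\phi|_{\Omega(\!\sqrt\epsilon)}^{A_\alpha,B_\alpha}.
$$
Taking suprema over lines, over $\alpha$, and over $\sqrt\epsilon\in S$ gives $|\chi_\alpha^\pm*\phi|_\Omega^\Lambda\le\|\chi_\alpha^\pm\|_{\Omega_L}^\Lambda\cdot|\phi|_\Omega^\Lambda$. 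Adding the $\rho\,|\phi|_\Omega^\Lambda$ from the $\delta_0$-term yields \eqref{eq:BL-widtildex1}; the $\|\cdot\|$-version \eqref{eq:BL-widtildex2} is identical using the second Young inequality \eqref{eq:BL-lemma2bb} (with $\|\phi\|$ in place of $|\phi|$).

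\textbf{Main obstacle.} The routine part is the Young-inequality bookkeeping; the one point requiring care is the placement of the integration line and the role of $\Omega_L(\!\sqrt\epsilon)$. One must check that for every line $c+e^{i\alpha}\R\subseteq\Omega_\alpha(\!\sqrt\epsilon)$ one can indeed decompose it as $s_1+s_2$ with the $s_1$-line lying in $\Omega_L(\!\sqrt\epsilon)$ and $\xi-s_1$ remaining in $\Omega(\!\sqrt\epsilon)$ for all $\xi$ on the original line — i.e. that the Minkowski-type containment $\Omega_L(\!\sqrt\epsilon)+\bigl(\text{suitable line}\bigr)\supseteq \Omega_\alpha(\!\sqrt\epsilon)$ holds with the relevant weight exponents $A_\alpha,B_\alpha$ matching on both factors. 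This is a geometric check on the strips \eqref{eq:BL-Omega_alpha} and the definition \eqref{eq:BL-Omega_L}, and it is the only place where the specific width $[-\tfrac32\sqrt\epsilon,\tfrac32\sqrt\epsilon]$ of $\Omega_\alpha(\!\sqrt\epsilon)$ (versus the shift by $2\sqrt\epsilon$ in $\Omega_L$) is used; the inclusion is strict enough that the exponents $A_\alpha=e^{-i\alpha}\Lambda$, $B_\alpha=-\frac{\pi i}{\sqrt\epsilon}-e^{-i\alpha}\Lambda$ are consistent across the factorization. Once this is in place, everything else follows directly from Lemma~\ref{lemma:BL-2} and its generalization.
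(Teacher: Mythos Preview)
Your approach is correct and matches the paper's: split $\widetilde x^\pm*\phi$ via Definition~\ref{def:widetildex} into $\pm\sqrt\epsilon\,\phi$ (contributing the $\rho$) plus a genuine convolution with $\chi_\alpha^\pm$, then apply the Young inequalities \eqref{eq:BL-lemma2aa}--\eqref{eq:BL-lemma2bb}. The paper's one-line proof cites exactly these two ingredients.

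The only point to sharpen is your ``main obstacle''. You correctly flag the geometric placement of the $s$-line, but note that $\Omega_L(\!\sqrt\epsilon)+\Omega_\alpha(\!\sqrt\epsilon)$ does \emph{not} cover all of $\Omega_\alpha(\!\sqrt\epsilon)$: in direction $\alpha$ one has $[-\tfrac32\sqrt\epsilon,-\tfrac12\sqrt\epsilon]+[-\tfrac32\sqrt\epsilon,\tfrac32\sqrt\epsilon]=[-3\sqrt\epsilon,\sqrt\epsilon]$, which misses $(\sqrt\epsilon,\tfrac32\sqrt\epsilon]$. This is precisely where the $2\sqrt\epsilon$-periodicity of $\chi_\alpha^\pm$ that the paper invokes comes in: for $\xi$ in the right portion of $\Omega_\alpha$ you use the \emph{other} representation in Definition~\ref{def:widetildex} (integration line $c_1+e^{i\alpha}\R$ with $c_1\in(0,2\sqrt\epsilon)$, equivalently the $s$-line in $\Omega_L+2\sqrt\epsilon$), and periodicity identifies $\chi_\alpha^\pm$ there with its values on $\Omega_L$. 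Making this explicit closes the gap you anticipated; everything else in your outline goes through as written.
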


\begin{proof}
It follows from Definition~\ref{def:widetildex} and $2\sqrt\epsilon$-periodicity of  $\chi_\alpha^+$.
\qed\end{proof}

\begin{lemma}\label{lemma:BL-3a}
If $\phi,\psi:\Omega\to\C^m$ are analytic vector functions such that $\|\phi\|_\Omega^{\Lambda},$ 
$\|\psi\|_\Omega^{\Lambda}\leq a$, then for any multi-index $l\in\N^m$, $|l|\geq1$,
$$|\phi^{*l}-\psi^{*l}|_\Omega^{\Lambda}\leq |l|\cdot a^{|l|-1}\cdot |\phi-\psi|_\Omega^{\Lambda}.$$
The same holds for the $\|\cdot\|_\Omega^{\Lambda}$-norm as well.
\end{lemma}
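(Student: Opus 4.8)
The plan is to prove the estimate by a telescoping argument on the convolution product, reducing the difference $\phi^{*l}-\psi^{*l}$ to a sum of $|l|$ terms each of which differs from the next by replacing one factor $\phi_i$ by $\psi_i$, and then to bound each such term using the submultiplicativity of the convolution norms, namely the Young-type inequality \eqref{eq:BL-lemma2aa} in the form $|\phi_1*\cdots*\phi_r|_\Omega^\Lambda\le |\phi_{i_0}|_\Omega^\Lambda\cdot\prod_{i\neq i_0}\|\phi_i\|_\Omega^\Lambda$ for one distinguished factor. First I would fix the multi-index $l=(l_1,\dots,l_m)$ with $|l|=N\ge 1$, and linearly order the $N$ ``slots'' of the iterated convolution $\phi^{*l}=\phi_1^{*l_1}*\cdots*\phi_m^{*l_m}$, so that slot $k$ ($1\le k\le N$) carries a factor equal to some component $\phi_{i(k)}$. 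Then I write
\begin{equation*}
\phi^{*l}-\psi^{*l}=\sum_{k=1}^{N}\bigl(\Psi_{<k}*(\phi_{i(k)}-\psi_{i(k)})*\Phi_{>k}\bigr),
\end{equation*}
where $\Psi_{<k}$ is the convolution of the first $k-1$ slots with all factors taken from $\psi$, and $\Phi_{>k}$ is the convolution of the last $N-k$ slots with all factors taken from $\phi$ (and each of these is $\delta_0$, the convolution unit, when the index range is empty). This is the standard ``swap one factor at a time'' identity, valid because convolution is commutative and associative on the relevant spaces, as recorded before Lemma \ref{lemma:BL-2} and in the discussion of $\Omega(\!\sqrt\epsilon)$.

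Next I would estimate the $k$-th summand. Applying \eqref{eq:BL-lemma2aa} repeatedly, singling out the factor $\phi_{i(k)}-\psi_{i(k)}$ to carry the $|\cdot|_\Omega^\Lambda$-norm and giving all the remaining $N-1$ factors the $\|\cdot\|_\Omega^\Lambda$-norm, one gets
\begin{equation*}
\bigl|\Psi_{<k}*(\phi_{i(k)}-\psi_{i(k)})*\Phi_{>k}\bigr|_\Omega^\Lambda
\le |\phi_{i(k)}-\psi_{i(k)}|_\Omega^\Lambda\cdot\prod_{j\neq k}\|(\text{factor in slot }j)\|_\Omega^\Lambda.
\end{equation*}
Since each factor in the product is some component of $\phi$ or of $\psi$, and since $\|\phi\|_\Omega^\Lambda=\max_i\|\phi_i\|_\Omega^\Lambda\le a$ and likewise for $\psi$, every factor norm is $\le a$, so the product of the $N-1$ remaining factors is $\le a^{N-1}$. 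Also $|\phi_{i(k)}-\psi_{i(k)}|_\Omega^\Lambda\le |\phi-\psi|_\Omega^\Lambda=\max_i|\phi_i-\psi_i|_\Omega^\Lambda$. Summing over the $N=|l|$ values of $k$ and using subadditivity of the norm yields
\begin{equation*}
|\phi^{*l}-\psi^{*l}|_\Omega^\Lambda\le |l|\cdot a^{|l|-1}\cdot|\phi-\psi|_\Omega^\Lambda,
\end{equation*}
which is the claim. The identical argument with $\|\cdot\|_\Omega^\Lambda$ in place of $|\cdot|_\Omega^\Lambda$ throughout (using \eqref{eq:BL-lemma2bb} instead of \eqref{eq:BL-lemma2aa}) gives the statement for the $\|\cdot\|_\Omega^\Lambda$-norm.

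The routine parts are the telescoping identity and the bookkeeping with the norms; the only mild subtlety worth stating carefully is that the mixed Young inequality \eqref{eq:BL-lemma2aa} must be applied in its iterated form for a product of $N$ functions with exactly one of them measured in the sup-type norm, which follows from \eqref{eq:BL-lemma2aa} and \eqref{eq:BL-lemma2bb} by an obvious induction on $N$, together with the fact (remarked after Definition \ref{definition:BL-5}) that these convolutions are independent of the direction $\alpha$ so the norms are genuinely sub-/multiplicative on $\Omega$. I do not anticipate a real obstacle here; the one point to handle with a word of care is the degenerate case $N=1$, where the identity reduces to $\phi_i-\psi_i$ with empty products equal to $\delta_0$, and the bound reads $|\phi_i-\psi_i|_\Omega^\Lambda\le |\phi-\psi|_\Omega^\Lambda$, which is immediate.
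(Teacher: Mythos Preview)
Your proposal is correct and follows essentially the same approach as the paper: the paper writes the telescoping identity
\[
\phi^{*l}-\psi^{*l}=\sum_{k=1}^{|l|}\psi_{i_1}*\cdots*\psi_{i_{k-1}}*(\phi_{i_k}-\psi_{i_k})*\phi_{i_{k+1}}*\cdots*\phi_{i_{|l|}}
\]
and then applies the convolution inequalities \eqref{eq:BL-lemma2aa} (resp.\ \eqref{eq:BL-lemma2bb}) exactly as you describe.
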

\begin{proof}
Writing $\phi^{*l}=\phi_{i_1}\!*\ldots*\phi_{i_{|l|}}$, $i_j\in\{1,\ldots,m\}$, we have
\begin{equation*}
\begin{split}
\phi^{*l}-\psi^{*l}=&\,(\phi_{i_1}\!-\!\psi_{i_1})*\phi_{i_2}\!*\ldots*\phi_{i_{|l|}}\,+\,\psi_{i_1}\!*(\phi_{i_2}\!-\!\psi_{i_2})*\phi_{i_3}\!*\ldots*\phi_{i_{|l|}}\,+\, \\
& \ldots\,+\, \psi_{i_1}\!*\ldots*\psi_{i_{|l|-1}}\!*(\phi_{i_{|l|}}\!-\!\psi_{i_{|l|}}).
\end{split}
\end{equation*}
The statement now follows from the convolution inequalities \eqref{eq:BL-lemma2aa} (resp. \eqref{eq:BL-lemma2bb}).
\qed\end{proof}

\begin{proof}[Proof of Proposition~\ref{proposition:BL-3}.]
If $L>m\cdot L_1$, then there exists $K>0$ such that for each multi-index  $l\in\N^m$, $|f_l(x,\epsilon)| \leq K\cdot\textstyle\binom{|l|}{l}\, L^{|l|},$
where for $y\in\C^m$, $|y|=\sum_{i=1}^m|y_i|$, and 
where $\binom{|l|}{l}$ are the multinomial coefficients given by $\,(y_1+\ldots+y_m)^k=\sum_{|l|=k}\binom{|l|}{l}\,y^l$, satisfying
$$\sum_{|l|=k} \textstyle\binom{|l|}{l}=m^k.$$
It follows from Lemma~\ref{lemma:BL-3b}, Lemma~\ref{lemma:BL-widetildex} and Lemma~\ref{lemma:BL-1},
that if $\Lambda>\Lambda_1$, then 
\begin{align*}
\|\Cal B^+[f_0]\|_\Omega^\Lambda &\leq \rho K_0, \\ 
\|\Cal B^+[f_l]\|_\Omega^\Lambda &\leq \rho K_1L,\, \text{ if }|l|=1, \\
\|\Cal B^+[f_l]\|_\Omega^\Lambda &\leq K_2\textstyle\binom{|l|}{l}L^{|l|},\, \ |l|\geq 2,
\end{align*}
for some $K_0,K_1,K_2>0$. 
Moreover, if we take $\Lambda$ sufficiently large and $\rho$ sufficiently small, then we can make the constant $K_0$ arbitrarily small. 
Let 
$$\sigma=\max_{(\xi,\sqrt\epsilon)\in\Omega} \big|\left(I\xi-M\right)^{-1}\!\cdot\!\left(\begin{smallmatrix}1\\[-3pt] \vdots \\[3pt] 1\end{smallmatrix}\right)\big|,$$  
then $\sigma<+\infty$ if the radius $\rho$ of $S$ is small, and 
suppose that $\rho K_0$ is small enough, so that there is $c>0$ satisfying 
\eqref{eq:BL-xx1} and \eqref{eq:BL-xx2} below.
Then if $\|\phi\|_{\Omega}^\Lambda\leq c$
\begin{equation}\label{eq:BL-xx1}
\|\Cal G^+[\phi]\|_{\Omega}^\Lambda
\leq \sigma\cdot\Big(\rho K_0+ \rho K_1cmL +K_2\sum_{k=2}^{+\infty} (cmL)^k\Big) \leq c,
\end{equation}
using \eqref{eq:BL-lemma2bb}, and similarly,  $|\Cal G^+[\phi]|_{\Omega}^\Lambda\leq\max\{c,\, |\phi|_{\Omega}^\Lambda\}$ if $\|\phi\|_{\Omega}^\Lambda\leq c$.
And if $\,\|\phi\|_{\Omega}^\Lambda,\|\psi\|_{\Omega}^\Lambda\leq c$, then
\begin{equation}\label{eq:BL-xx2}
\frac{|\Cal G^+[\phi]-\Cal G^+[\psi]|_{\Omega}^\Lambda}{|\phi-\psi|_{\Omega}^\Lambda} 
\leq \sigma\cdot\Big(\rho K_1 mL+K_2mL\sum_{k=2}^{+\infty} k(cmL)^{k-1}\Big) \leq\frac{1}{2},
\end{equation}
using  Lemma~\ref{lemma:BL-3a} and the convolution inequality \eqref{eq:BL-lemma2aa}. The same holds for the $\|\cdot\|_{\Omega}^\Lambda$-norm.
Hence the operator $\Cal G^+$ is $|\cdot|_{\Omega}^\Lambda$-contractive, and the sequence $\big(\Cal G^+\big)^n[0]$ converges, as $n\to+\infty$, $|\cdot|_{\Omega}^\Lambda$-uniformly   
to an analytic function $\upsilon^+$ satisfying $\Cal G^+\big[\upsilon^+]=\upsilon^+$.

From \eqref{eq:BL-Bpm} it follows that $\Cal B^-[f_l]=e^{\frac{\xi\pi i}{\sqrt\epsilon}}\cdot \Cal B^+[f_l]$, 
hence $\Cal G^-[\upsilon^-]=\Cal G^-[e^{\frac{\xi\pi i}{\sqrt\epsilon}}\upsilon^+]\linebreak[0]=e^{\frac{\xi\pi i}{\sqrt\epsilon}} \cdot\Cal G^+[\upsilon^+]=e^{\frac{\xi\pi i}{\sqrt\epsilon}} \cdot\upsilon^+=\upsilon^-$ is a fixed point of $\Cal G^-$.
\qed\end{proof}

\begin{proposition}[Poincaré case]\label{lemma:BL-4}
If the spectrum of $M$ is of Poincaré type, i.e. if it is contained in a sector of opening $<\pi$, then, for small $\sqrt\epsilon$, the region 
$\Omega(\!\sqrt\epsilon)$ may be chosen so that it has all the eigenvalues of $M$ on the same side---let's say the side where $2\sqrt\epsilon$ is.
In such case, let
$\Omega_1(\!\sqrt\epsilon)$ be the extension of $\Omega(\!\sqrt\epsilon)$ to the whole region on the opposite side (see Figure~\ref{figure:BL-7}).
The solutions $\upsilon^\pm(\xi,\sqrt\epsilon)$ of Proposition~\ref{proposition:BL-3} can be analytically extended to $\Omega_1(\!\sqrt\epsilon)\sminus (-2\sqrt\epsilon)\N_{>0}$ 
with at most simple poles at the points  $-2\sqrt\epsilon\N_{>0}$. The function $\frac{\upsilon^\pm}{\chi_\alpha^\pm}$ 
is analytic in $\Omega_1$ and has at most exponential growth $<Ce^{\Lambda|\xi|}$  for some $\Lambda,C>0$ independent of $\sqrt\epsilon$.

\begin{figure}[t]
\centering
\includegraphics[width=\textwidth]{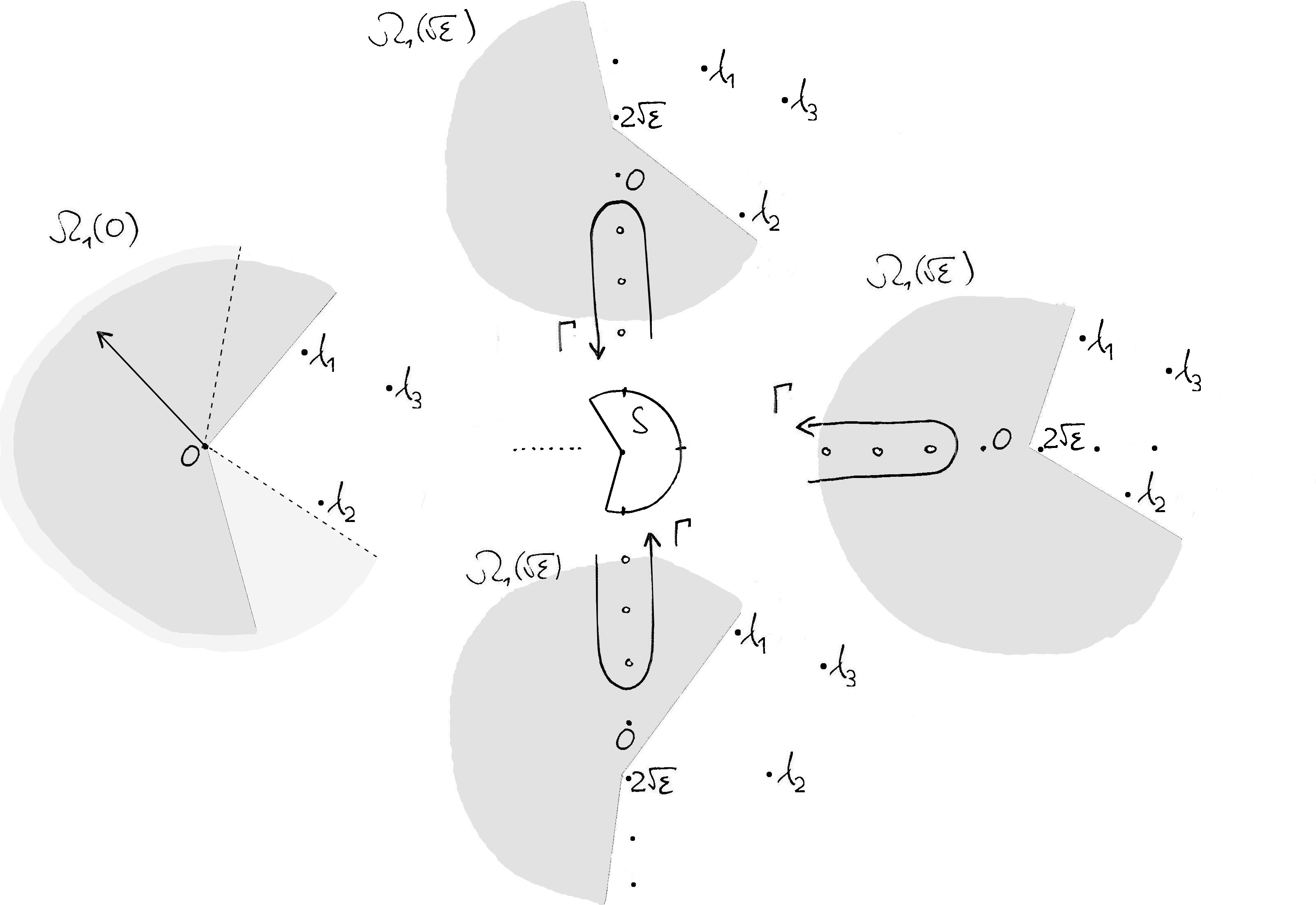}
\caption{The extended regions $\Omega_1(\!\sqrt\epsilon)$ in the Borel plane, together with the modified integration path $\Gamma$ of the Laplace transform (compare with Figure~\ref{figure:BL-5}).
The limit region $\Omega_1(0):=\bigcup_{\sqrt\epsilon\in S}\bigcap_{\nu\to 0+}\Omega_1(\nu\sqrt\epsilon)\sminus(-2\sqrt\epsilon)\N_{>0}$ is composed of two sectors connected at the origin; 
the solution $\upsilon^+(\xi,0)$ vanishes on the lower sector, while the solution $\upsilon^-(\xi,0)$ vanishes on the upper one.}
\label{figure:BL-7}
\end{figure}
\end{proposition}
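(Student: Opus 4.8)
The plan is to re-run the contraction scheme of Proposition~\ref{prop:BL-3} on the enlarged region $\Omega_1$, after dividing the convolution equation \eqref{eq:BL-bpcm} by the $2\sqrt\epsilon$-periodic factor $\chi_\alpha^\pm$ that carries all the poles at $-2\sqrt\epsilon\N^*$. First, using the Poincaré hypothesis I choose the directions $\beta_1<\beta_2$ (hence $S$, \eqref{eq:BL-S}) so that $\Omega(\!\sqrt\epsilon)$ keeps every eigenvalue of $M(\epsilon)$ strictly on the $2\sqrt\epsilon$-side; then on the extension $\Omega_1(\!\sqrt\epsilon)$ — and on $\Omega_1=\{(\xi,\sqrt\epsilon):\xi\in\Omega_1(\!\sqrt\epsilon)\}$ — the resolvent $(\xi I-M(\epsilon))^{-1}$ is analytic and uniformly bounded for $\rho$ small. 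Each $(x^2\!-\epsilon)g_l$ is analytic on a disc about $-\sqrt\epsilon$ of a fixed radius $r>2\sqrt{|\epsilon|}$ inside $X(\Lambda_1,\!\sqrt\epsilon)$ and vanishes at $-\sqrt\epsilon$, so part~4) of Proposition~\ref{proposition:BL-4} yields $\widetilde h_l^\pm=\chi_\alpha^\pm\cdot\phi_{h,l}$ with $\phi_{h,l}$ entire and $|\phi_{h,l}(\xi)|\le C_l\,e^{\Lambda_0|\xi|}$ at a rate $\Lambda_0$ independent of $\sqrt\epsilon$; similarly, by Definition~\ref{def:widetildex}, convolution by $\widetilde x^\pm$ is convolution by $\chi_\alpha^\pm$ up to the harmless $\pm\sqrt\epsilon\,\delta_0$-term. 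Thus inside $\Omega_1$ the only singularities of the data of \eqref{eq:BL-bpcm} are the simple poles of $\chi_\alpha^\pm$ at $-2\sqrt\epsilon\N^*$.

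\emph{Rescaling.} Set $\psi:=\bigl(1-e^{\xi\pi i/\sqrt\epsilon}\bigr)\widetilde y^+$. Since $\widetilde y^-=e^{\xi\pi i/\sqrt\epsilon}\widetilde y^+$ and $\chi_\alpha^-=e^{\xi\pi i/\sqrt\epsilon}\chi_\alpha^+$, it equals both $\widetilde y^+/\chi_\alpha^+$ and $\widetilde y^-/\chi_\alpha^-$, and on $\Omega$ it is analytic with finite $\|\cdot\|_\Omega^\Lambda$, $|\cdot|_\Omega^\Lambda$. The elementary identity
$$\chi_\alpha^+(s)\,\chi_\alpha^+(\xi-s)=\chi_\alpha^+(\xi)\,\bigl(\chi_\alpha^+(s)+\chi_\alpha^+(\xi-s)-1\bigr),$$
a consequence of $1/\chi_\alpha^+=1-e^{\xi\pi i/\sqrt\epsilon}$ and $\chi_\alpha^+-\chi_\alpha^-=1$, gives inductively $\widetilde y^{+\,*l}=\chi_\alpha^+\cdot Q_l[\psi]$, where $Q_l[\psi]$ is a universal convolution-polynomial in $\psi$ whose coefficient functions are built from $\chi_\alpha^\pm$. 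Dividing \eqref{eq:BL-bpcm} by $\chi_\alpha^+$ then transforms it into a fixed-point equation $\psi=\widehat{\Cal G}[\psi]$ of the same shape as \eqref{eq:BL-operatorG}, with $(\xi I-M(\epsilon))^{-1}$ unchanged, and whose remaining ingredients ($\phi_{h,0}$, $\phi_{h,l}$, $a_l\widetilde x^\pm/\chi_\alpha^+$, and the $Q_l$) are, along the relevant convolution contours, of exponential growth at a rate uniform in $\sqrt\epsilon$; crucially they no longer produce singularities inside $\Omega_1$, which is exactly why the surviving poles of $\widetilde y^\pm=\chi_\alpha^\pm\psi$ are simple and located precisely at $-2\sqrt\epsilon\N^*$.

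\emph{Contraction and conclusion.} One then repeats, with the obvious modifications, the estimates of Proposition~\ref{prop:BL-3} on the space of $\C^m$-valued functions analytic on $\Omega_1$, equipped with weighted norms in the spirit of Definition~\ref{definition:BL-5} but taken over all lines $c+e^{i\alpha}\R\subset\Omega_1$ and with the $\sqrt\epsilon$-tuned weights replaced by weights allowing exponential growth $e^{\Lambda|\xi|}$ at a fixed rate $\Lambda$; the Young inequalities \eqref{eq:BL-lemma2aa}, \eqref{eq:BL-lemma2bb} persist, and for $\Lambda$ large and $\rho$ small $\widehat{\Cal G}$ is contractive, producing a unique fixed point $\psi$ with bounded weighted norm, hence $|\psi(\xi,\sqrt\epsilon)|<C\,e^{\Lambda|\xi|}$ uniformly in $\sqrt\epsilon\in S$. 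By uniqueness $\psi$ restricts on $\Omega$ to $\widetilde y^+/\chi_\alpha^+$, so $\widetilde y^\pm:=\chi_\alpha^\pm\cdot\psi$ is the desired continuation of $\widetilde y^\pm$ to $\Omega_1\setminus(-2\sqrt\epsilon)\N^*$, analytic there with at most the simple poles of $\chi_\alpha^\pm$ at $-2\sqrt\epsilon\N^*$, while $\widetilde y^\pm/\chi_\alpha^\pm=\psi$ is analytic on all of $\Omega_1$ with the stated uniform exponential bound. On the fibre $\sqrt\epsilon=0$, where $\chi_\alpha^\pm(\cdot,0)\in\{0,1\}$, this forces $\widetilde y^+(\cdot,0)$ to vanish on the sub-sector where $\chi_\alpha^+(\cdot,0)=0$ and $\widetilde y^-(\cdot,0)$ on the opposite one, matching Figure~\ref{figure:BL-7}.

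\emph{Main obstacle.} The technical heart is the last step: building the weighted Banach space on the \emph{unbounded} $\Omega_1$ so that (a) convolution against the now exponentially-growing entire kernels $\phi_{h,l}$ is bounded (which forces the weight's rate to dominate $\Lambda_0$), (b) the contraction constant stays $<1$ uniformly in $\sqrt\epsilon$, and (c) the exponential rate of $\psi$ is genuinely $\sqrt\epsilon$-independent — this uses that $\widetilde y^+$ decays on the half of $\Omega_1$ where $\chi_\alpha^+$ decays while $\widetilde y^-$ decays on the other half, so the two representations $\psi=(1-e^{\xi\pi i/\sqrt\epsilon})\widetilde y^+=(e^{-\xi\pi i/\sqrt\epsilon}-1)\widetilde y^-$ patch up without reintroducing the $1/|\sqrt\epsilon|$-scale. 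Managing the convolution contours (the deformed path $\Gamma$ of Figure~\ref{figure:BL-7}) as they thread between the small discs removed around $-2\sqrt\epsilon\N^*$ is the accompanying bookkeeping.
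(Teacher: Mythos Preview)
Your high-level strategy matches the paper's: both recognize that $\psi:=\widetilde y^\pm/\chi_\alpha^\pm$ is the analytic object on $\Omega_1$ and that an exponentially weighted sup-norm $\sup|\psi|\,e^{-\Lambda|\xi|}$ (the paper's $\boxnorm{\cdot}_{\Omega_1}^\Lambda$) is the right control. Your algebraic identity $\chi^+(s)\chi^+(\xi-s)=\chi^+(\xi)\bigl(\chi^+(s)+\chi^+(\xi-s)-1\bigr)$ is correct and is an elegant way to see that $(\widetilde y^+)^{*l}/\chi^+$ is a convolution expression in $\psi$ alone.

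There is, however, a genuine gap in your ``Contraction and conclusion'' step, and it is precisely the one you flag in your last paragraph but do not resolve. The claim that the Young inequalities \eqref{eq:BL-lemma2aa}--\eqref{eq:BL-lemma2bb} ``persist'' for weights allowing exponential growth $e^{\Lambda|\xi|}$ is false as stated: if $|\phi|,|\psi|\le Ce^{\Lambda|\xi|}$ on a line, then $|\phi(s)\psi(\xi-s)|\le C^2e^{\Lambda(|s|+|\xi-s|)}$ is not integrable over $s\in c+e^{i\alpha}\R$, so neither the $L^1$-type norm nor the convolution itself is controlled. Equivalently, your kernels $\chi^+(s)+\chi^+(\xi-s)-1$ still carry simple poles (in $s$) on every line in $\Omega_1$ once $\xi$ is deep in the extension, so $\widehat{\Cal G}$ is not an operator between spaces of analytic functions with naive exponential weights.

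The paper's fix is not to replace the weights but to augment them. It keeps the original $\|\cdot\|_\Omega^\Lambda$ (whose decaying weights make line-integrals converge) and introduces a second norm
\[
\fatnorm{\psi}_{\Omega_1}^\Lambda\;=\;\mu\,\|\psi\|_\Omega^\Lambda\;+\;\sup_{\xi}\,2|\sqrt\epsilon|\sum_{k\ge0}\big|\tfrac{\psi}{\chi^+}(\xi-2k\sqrt\epsilon)\big|\,e^{-\Lambda|\xi-2k\sqrt\epsilon|},
\]
tailored to the actual structure of the extended convolution: for $\xi\in\Omega_1\setminus\Omega$ one deforms the contour back to the original strip and collects residues at the poles $-2j\sqrt\epsilon$, obtaining
\[
(\phi*\psi)(\xi)=\!\!\int_{c+e^{i\alpha}\R}\!\!\phi(t-2k\sqrt\epsilon)\,\psi(\xi_0-t)\,dt\;-\;2\sqrt\epsilon\sum_{j=0}^{k-1}\tfrac{\phi}{\chi^+}(-2(k-j)\sqrt\epsilon)\cdot\psi(\xi_0-2j\sqrt\epsilon),
\]
with $\xi_0=\xi+2k\sqrt\epsilon\in\Omega$. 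The integral term is handled by the old $\|\cdot\|_\Omega^\Lambda$, while the residue sum is exactly what the discrete $\ell^1$-piece of $\fatnorm{\cdot}_{\Omega_1}^\Lambda$ bounds. One then proves (Lemma~\ref{lemma:BL-6}) the pair of inequalities $\boxnorm{\phi*\psi}\le\boxnorm{\phi}\cdot\fatnorm{\psi}$ and $\fatnorm{\phi*\psi}\le\fatnorm{\phi}\cdot\fatnorm{\psi}$, and the contraction of Proposition~\ref{prop:BL-3} goes through verbatim with $(\boxnorm{\cdot},\fatnorm{\cdot})$ in place of $(|\cdot|_\Omega^\Lambda,\|\cdot\|_\Omega^\Lambda)$. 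In short: the ``bookkeeping'' you allude to is the whole point, and what it produces is not a single exponential-weight norm but this mixed continuous/discrete norm.
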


\begin{proof}
As in the proof of Proposition~\ref{proposition:BL-3}, the solution $\upsilon^+$ is constructed as a limit of the iterative sequence of functions $\,(\Cal G^+)^n[0]$, $n\to+\infty$. 
We will show by induction that for each $n$, the function $\,(\Cal G^+)^n[0]$ is analytic on 
$\Omega_1 \sminus\{\xi\in\!-2\sqrt\epsilon\N_{>0}\}$ and has at most simple poles at the points $\xi\in-2\sqrt\epsilon\N_{>0}$,
and that the sequence converges uniformly to $\upsilon^+$ with respect to the norm
\begin{equation}\label{eq:BL-boxnorm}
\boxnorm\phi_{\Omega_1}^\Lambda:=\sup_{(\xi,\sqrt\epsilon)\in\Omega_1} |\tfrac{\phi}{\,\chi^+\!}(\xi,\sqrt\epsilon)|\,e^{-\Lambda|\xi|}.
\end{equation}
To do so we will introduce another norm $\fatnorm{\hskip-3pt|\,\cdot\,}_{\Omega_1}^\Lambda$, defined in \eqref{eq:BL-fatnorm} below, such that the two norms satisfy
convolution inequalities similar to those satisfied by \hbox{$|\cdot|_{\Omega}^\Lambda$} and  \hbox{$\|\cdot\|_{\Omega}^\Lambda$} (Lemma~\ref{lemma:BL-6} below). 
Then one can simply replicate the proof of Proposition~\ref{proposition:BL-3}
with the norm $\boxnorm{\cdot}_{\Omega_1}^\Lambda$ in place of $|\cdot|_{\Omega}^\Lambda$ 
and the norm $\fatnorm{\hskip-3pt|\,\cdot\,}_{\Omega_1}^\Lambda$ in place of $\|\cdot\|_{\Omega}^\Lambda$.

\smallskip
\begin{figure}[t]
\centering
\subfigure[$\Im\big(\frac{\xi}{\sqrt\epsilon}\big)>0$]{\includegraphics[width=0.49\textwidth]{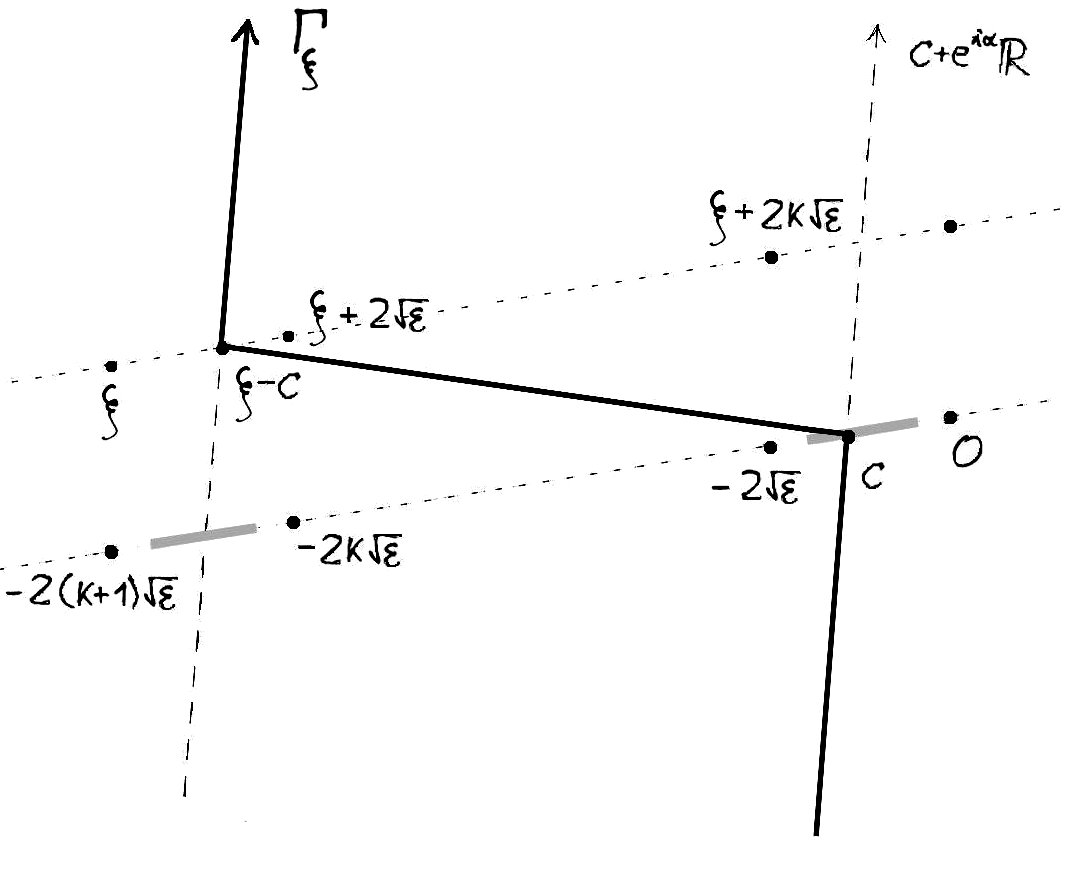}} 
\subfigure[$\Im\big(\frac{\xi}{\sqrt\epsilon}\big)<0$]{\includegraphics[width=0.49\textwidth]{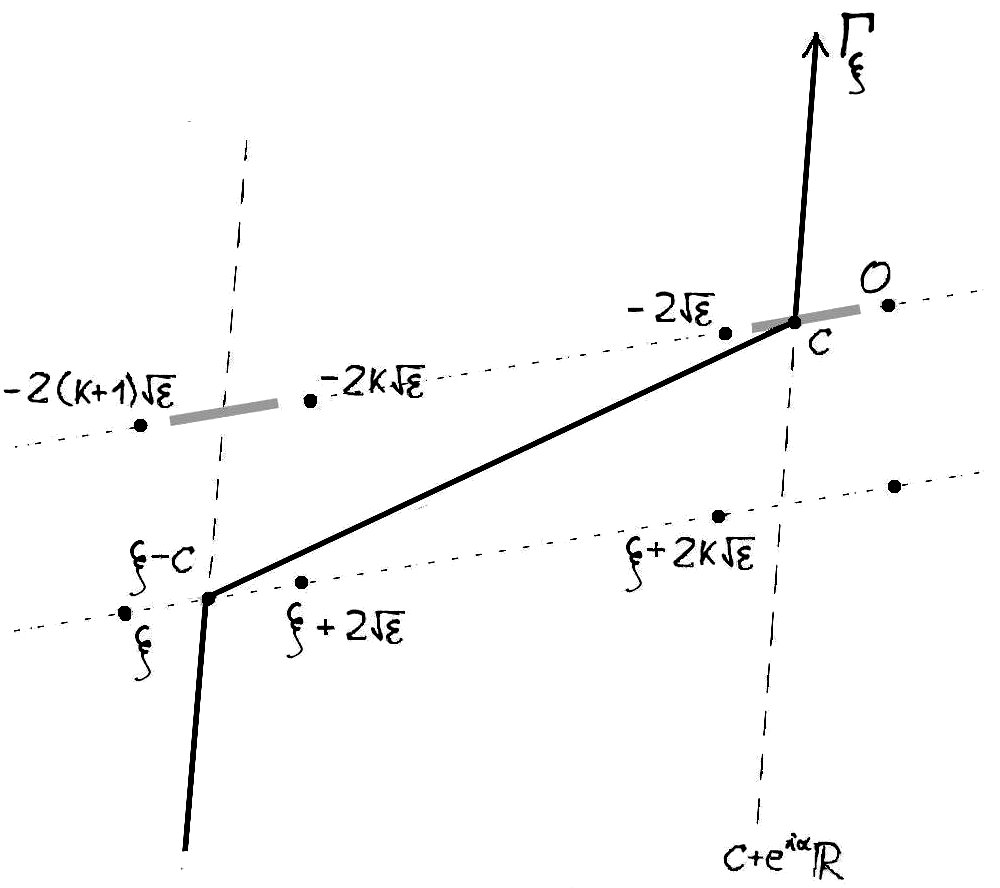}}
\caption{The integration path $\Gamma_\xi$ of convolution $(\phi*\psi)(\xi)$, \, $\xi\in\Omega_1(\!\sqrt\epsilon)$.}
\label{figure:BL-8}
\end{figure}

Let us first show that if $\phi,\psi$ are two functions analytic on $\Omega_1(\!\sqrt\epsilon)\sminus(-2\sqrt\epsilon\N_{>0})$, 
then so is their convolution  $\phi*\psi$.
If $\xi\in\Omega_1(\!\sqrt\epsilon)\sminus\sqrt\epsilon\,\R$, then the analytic continuation of $\phi*\psi$ at the point $\xi$ 
is given by the integral
$$(\phi*\psi)(\xi)=\int_{\Gamma_\xi}\phi(s)\,\psi(\xi-s)\,ds$$
with $\Gamma_\xi$ a symmetric path with respect to the point $\frac{\xi}{2}$ passing through the segments $[-\frac{3}{2}\sqrt\epsilon,\frac{3}{2}\sqrt\epsilon]$ and 
$[\xi-\frac{3}{2}\sqrt\epsilon,\xi+\frac{3}{2}\sqrt\epsilon]$, as in Figure~\ref{figure:BL-8}.
Note that when $\xi$ approaches a point on 
$]\!-\!\infty\sqrt\epsilon,-2\sqrt\epsilon[\,\smallsetminus\,\big(\!-\!2\sqrt\epsilon\mathbb{N}_{>0}\big)$
from one side or another, the values of the two integrals are identical, 
since both paths $\Gamma_\xi$ pass in between the same singularities.

Suppose now that $\phi,\psi$ have at most simple poles at the points  $-2\sqrt\epsilon\N_{>0}$.
If $\xi$ is in $\Omega(\!\sqrt\epsilon)\cup2\Omega_L(\!\sqrt\epsilon)$ ($\Omega_L$ is defined in \eqref{eq:BL-Omega_L}), then $\Gamma_\xi=c+e^{i\alpha\R}$ for some 
$c\in[-\frac{3}{2}\sqrt\epsilon,-\frac{1}{2}\sqrt\epsilon]\subset\Omega_L(\!\sqrt\epsilon)$. 
Else $\xi\in2\Omega_L(\!\sqrt\epsilon)-2k\sqrt\epsilon$ for some $k\in\N_{>0}$, and one can express the convolution as
\begin{align}
&(\phi*\psi)(\xi)\,
=\kern-.3em\int\displaylimits_{c-2k\sqrt\epsilon+e^{i\alpha}\R} \kern-2em\phi(s)\,\psi(\xi-s)\,ds\,+\,2\pi i\sum_{j=1}^{k}\res_{-2j\sqrt\epsilon}\,\phi\cdot\psi(\xi+2j\sqrt\epsilon)\nonumber \\
&=\kern-.3em\int\displaylimits_{c+e^{i\alpha}\R} \kern-1em\phi(t-2k\sqrt\epsilon)\,\psi(\xi_0-t)\,dt\,-\,2\sqrt\epsilon \sum_{j=0}^{k-1}\tfrac{\phi}{\,\chi^+\!}(-2(k-j)\sqrt\epsilon)
\cdot\psi(\xi_0-2j\sqrt\epsilon), \label{eq:BL-303}
\end{align}
where $c\in[-\frac{3}{2}\sqrt\epsilon,-\frac{1}{2}\sqrt\epsilon]\subset\Omega_L(\!\sqrt\epsilon)$ and $\xi_0=\xi+2k\sqrt\epsilon\in c+\Omega_L(\!\sqrt\epsilon)$, i.e. $\xi-s\in\Omega_L(\!\sqrt\epsilon)$, 
see Figure ~\ref{figure:BL-8}.
We will use this formula to obtain an estimate for the norm $\boxnorm{\phi*\psi}_{\Omega_1}^\Lambda$, $\Lambda\geq0$.
Since $|\tfrac{1}{\,\chi^+(\xi,\sqrt\epsilon)}|\leq(1+|e^{\frac{s\pi i}{\sqrt\epsilon}}|)(1+|e^{\frac{(\xi-s)\pi i}{\sqrt\epsilon}}|)$, cf. \eqref{eq:BL-300},
we have
\begin{align}
|\tfrac{\phi\,*\,\psi}{\,\chi^+\!}(\xi)|\,e^{-\Lambda|\xi|}\leq &
\sup_{s\in\Omega_L(\!\sqrt\epsilon)-2k\sqrt\epsilon}|\phi(s)|(1+|e^{\frac{s\pi i}{\sqrt\epsilon}}|)\,e^{-\Lambda|s|}\cdot \|\psi\|_{\Omega(\!\sqrt\epsilon)}^\Lambda \nonumber \\
&+2\sqrt{|\epsilon|}\cdot\boxnorm\phi_{\Omega_1}^\Lambda \cdot \sum_{j=0}^{k-1} |\tfrac{\psi}{\,\chi^+\!}(\xi_0-2j\sqrt\epsilon)|\,e^{-\Lambda|\xi_0-2j\sqrt\epsilon|}, \label{eq:BL-301}
\end{align}
due to the $2\sqrt\epsilon$-periodicity of $\chi^+$.

Let $\mu\geq 1$ be such that
\begin{equation}\label{eq:BL-mu}
1+|e^{\frac{s\pi i}{\sqrt\epsilon}}|\leq\mu\,|\tfrac{1}{\,\chi^+(s,\sqrt\epsilon)}| \quad \text{for all }\ s\in\Omega_L(\!\sqrt\epsilon)
\end{equation}
and define
\begin{equation}\label{eq:BL-fatnorm}
\fatnorm\psi_{\Omega_1}^\Lambda:=\mu\,\|\psi\|_{\Omega}^\Lambda
+\!\!\!\sup_{\substack{\sqrt\epsilon\in S\\ \xi\in2\Omega_L(\!\sqrt\epsilon)}}\!\!\! 2\sqrt{|\epsilon|}\sum_{k=0}^{+\infty} |\tfrac{\psi}{\,\chi^+\!}(\xi-2k\sqrt\epsilon)|\,e^{-\Lambda|\xi-2k\sqrt\epsilon|}.
\end{equation}
Then \eqref{eq:BL-301} implies that
$$\boxnorm{\phi*\psi}_{\Omega_1}^\Lambda\leq \boxnorm{\phi}_{\Omega_1}^\Lambda\cdot\fatnorm\psi_{\Omega_1}^\Lambda.$$

Note that by \textit{4)} of Proposition~\ref{proposition:BL-4}, if $\frac{f(x,\epsilon)}{x^2-\epsilon}$ is analytic on $\{|x^2-\epsilon|<r^2\}\times\{|\epsilon|<\rho^2\}$ for some $r>2\rho>0$, then for any
$\Lambda>\frac{1}{r-2\rho}$
$$\boxnorm{\Cal B_\alpha[f]}_{\Omega_1}^\Lambda<+\infty,\qquad \fatnorm{\Cal B_\alpha^+[f]}_{\Omega_1}^\Lambda<+\infty,$$
and one can see that $\fatnorm{\Cal B_\alpha[f]}_{\Omega_1}^\Lambda$ can be made arbitrarily small taking $\Lambda$ sufficiently large (cf. Lemma~\ref{lemma:BL-1}).
\qed\end{proof}

\begin{lemma}\label{lemma:BL-6}
Let $\frac{\phi}{\,\chi^+\!},\,\frac{\psi}{\,\chi^+\!}$ be analytic functions on $\Omega_1$ such that $\frac{\phi}{\,\chi^+\!}(0,\sqrt\epsilon)=\frac{\psi}{\,\chi^+\!}(0,\sqrt\epsilon)=0$. Then
\begin{align*}
\boxnorm{\phi*\psi}_{\Omega_1}^\Lambda &\leq \boxnorm\phi_{\Omega_1}^\Lambda\cdot\fatnorm\psi_{\Omega_1}^\Lambda,\\
\fatnorm{\phi*\psi}_{\Omega_1}^\Lambda &\leq \fatnorm\phi_{\Omega_1}^\Lambda\cdot\fatnorm\psi_{\Omega_1}^\Lambda.
\end{align*}
\end{lemma}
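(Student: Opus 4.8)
The plan is to treat the two inequalities separately. The first one, $\boxnorm{\phi*\psi}_{\Omega_1}^\Lambda\le\boxnorm\phi_{\Omega_1}^\Lambda\cdot\fatnorm\psi_{\Omega_1}^\Lambda$, needs essentially no new work: it was already established inside the proof of Proposition~\ref{lemma:BL-4}, where the pointwise estimate \eqref{eq:BL-301} (itself deduced from the convolution formula \eqref{eq:BL-303}), combined with the definition \eqref{eq:BL-fatnorm} of $\fatnorm{\cdot}_{\Omega_1}^\Lambda$, yields exactly this. I would simply re-record that computation, taking the supremum over $\xi\in\Omega_1(\sqrt\epsilon)$ and over $\sqrt\epsilon\in S$, and observe that the hypothesis $\tfrac{\phi}{\chi^+}(0,\sqrt\epsilon)=\tfrac{\psi}{\chi^+}(0,\sqrt\epsilon)=0$ makes $\phi$ and $\psi$, hence also $\phi*\psi$, analytic at the origin, so that $\tfrac{\phi*\psi}{\chi^+}$ is analytic on all of $\Omega_1$ and the two norms of $\phi*\psi$ are meaningful.

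For the second inequality, $\fatnorm{\phi*\psi}_{\Omega_1}^\Lambda\le\fatnorm\phi_{\Omega_1}^\Lambda\cdot\fatnorm\psi_{\Omega_1}^\Lambda$, I would split $\fatnorm{\cdot}_{\Omega_1}^\Lambda$ into its two defining pieces — the ``continuous'' part $\mu\|\cdot\|_\Omega^\Lambda$ and the ``periodized'' part, call it $[\,\cdot\,]$, which is the second term of \eqref{eq:BL-fatnorm} — and bound $\fatnorm{\phi*\psi}_{\Omega_1}^\Lambda$ piece by piece. The $\mu\|\phi*\psi\|_\Omega^\Lambda$ piece is immediately controlled by the Young inequality \eqref{eq:BL-lemma2bb} for $\|\cdot\|_\Omega^\Lambda$ together with $\mu\ge1$, giving $\mu\|\phi*\psi\|_\Omega^\Lambda\le(\mu\|\phi\|_\Omega^\Lambda)(\mu\|\psi\|_\Omega^\Lambda)$. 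It then remains to bound $[\phi*\psi]$ by the ``cross'' contributions $[\phi]\,\mu\|\psi\|_\Omega^\Lambda$, $\mu\|\phi\|_\Omega^\Lambda\,[\psi]$ and the ``discrete $\times$ discrete'' contribution $[\phi][\psi]$, so that adding all the pieces telescopes into $(\mu\|\phi\|_\Omega^\Lambda+[\phi])(\mu\|\psi\|_\Omega^\Lambda+[\psi])=\fatnorm\phi_{\Omega_1}^\Lambda\fatnorm\psi_{\Omega_1}^\Lambda$.

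To estimate $[\phi*\psi]$ I would, for fixed $\sqrt\epsilon\in S$, $\xi_0\in 2\Omega_L(\sqrt\epsilon)$ and each $k\ge1$, express the analytic continuation of $\phi*\psi$ at $\xi_0-2k\sqrt\epsilon$ by \eqref{eq:BL-303}: a line integral $I_k=\int_{c+e^{i\alpha}\R}\phi(t-2k\sqrt\epsilon)\,\psi(\xi_0-t)\,dt$ plus the finite residue sum $-2\sqrt\epsilon\sum_{\ell=1}^{k}\tfrac{\phi}{\chi^+}(-2\ell\sqrt\epsilon)\,\psi(\xi_0-2(k-\ell)\sqrt\epsilon)$. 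Dividing by $\chi^+(\xi_0-2k\sqrt\epsilon)$, weighting by $e^{-\Lambda|\xi_0-2k\sqrt\epsilon|}$ and summing $2\sqrt{|\epsilon|}\sum_{k\ge0}$, the integral terms are handled exactly as in the passage from \eqref{eq:BL-303} to \eqref{eq:BL-301} — pulling out a supremum of $\phi$ against the weight $(1+|e^{s\pi i/\sqrt\epsilon}|)\,e^{-\Lambda|s|}$, which by $2\sqrt\epsilon$-periodicity of $\chi^+$ and \eqref{eq:BL-mu} is dominated by $\mu$ times a $\boxnorm{\cdot}$-type quantity, and leaving the residual integral as a factor of $\|\psi\|_\Omega^\Lambda$-type — but now the $k$-summation must be carried, and its convergence is precisely the summability encoded in the periodized part of $\fatnorm\psi_{\Omega_1}^\Lambda$, which produces the cross contributions. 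The residue terms assemble, after summation in $k$, into a discrete convolution along the $2\sqrt\epsilon$-lattice; estimating $1/|\chi^+(\xi_0-2k\sqrt\epsilon)|$ through \eqref{eq:BL-300} and the periodicity (so the contribution of the lattice point $-2\ell\sqrt\epsilon$ drops out cleanly) and organizing the weights $e^{-\Lambda|\cdot|}$ along this collinear, equally spaced lattice, I expect the discrete Young inequality $\|a*b\|_{\ell^1}\le\|a\|_{\ell^1}\|b\|_{\ell^1}$ to bound this by the periodized $\ell^1$-sum of $\tfrac{\phi}{\chi^+}$ over $-2\sqrt\epsilon\N^*$ times the periodized sum of $\psi$, i.e. by $[\phi][\psi]$ up to constants.

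The main obstacle is the bookkeeping in this last step. Neither weight is exactly submultiplicative: $1/|\chi^+|$ only via \eqref{eq:BL-300} and $2\sqrt\epsilon$-periodicity at the fixed price $\mu$, and $e^{-\Lambda|\cdot|}$ only because the relevant points lie on equally spaced collinear rays, up to a bounded transverse offset controlled by $\xi_0\in2\Omega_L(\sqrt\epsilon)$; one also has to pass cleanly between the ``continuous'' ($L^1$-type) and ``discrete'' ($\ell^1$-type) parts of the norms, which needs an elementary Cauchy-type estimate to convert the discrete-sum control of $\tfrac{\phi}{\chi^+}$ along the lattice into line-supremum control in the integral terms. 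More seriously, all constants must be uniform in $\sqrt\epsilon\in S$, in particular through $\sqrt\epsilon\to0$, where the lattice spacing vanishes, the number of residue terms blows up, and $2\sqrt{|\epsilon|}\sum_k$ degenerates into a Riemann integral; the factor $2\sqrt{|\epsilon|}$ in front of the sum in \eqref{eq:BL-fatnorm} and the standing bound $\Lambda<\tfrac{\pi\sin\eta}{2\rho}$ are exactly what keep this limit harmless. Once Lemma~\ref{lemma:BL-6} is in hand, the contraction argument of Proposition~\ref{prop:BL-3}, adapted as in the proof of Proposition~\ref{lemma:BL-4}, runs verbatim with $\boxnorm{\cdot}_{\Omega_1}^\Lambda$ in place of $|\cdot|_\Omega^\Lambda$ and $\fatnorm\phi_{\Omega_1}^\Lambda$ in place of $\|\cdot\|_\Omega^\Lambda$.
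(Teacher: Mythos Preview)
Your proposal is correct and follows essentially the same route as the paper's proof: the first inequality is quoted from the proof of Proposition~\ref{lemma:BL-4}, and for the second you split $\fatnorm{\cdot}_{\Omega_1}^\Lambda$ into its continuous and periodized parts, handle the former via Young's inequality \eqref{eq:BL-lemma2bb} together with $\mu\ge1$, and handle the latter by inserting \eqref{eq:BL-303}, using \eqref{eq:BL-mu} and the $2\sqrt\epsilon$-periodicity of $\chi^+$ on the integral terms, and the discrete $\ell^1$ Young inequality on the residue double sum. The paper is marginally more economical---it gets $[\phi*\psi]\le[\phi]\cdot\fatnorm\psi_{\Omega_1}^\Lambda$ directly, needing only one cross term and no auxiliary Cauchy-type estimate---but your outline and the technical concerns you flag match the actual computation.
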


\begin{proof}
The first inequality is given in the proof of Lemma~\ref{lemma:BL-4}. We need to prove the second one. By definition
\begin{equation*}
\fatnorm{\phi*\psi}_{\Omega_1}^\Lambda=\mu\,\|\phi*\psi\|_{\Omega}^\Lambda
+\!\!\!\sup_{\substack{\sqrt\epsilon\in S\\ \xi\in2\Omega_L(\!\sqrt\epsilon)}}\!\!\! 2\sqrt{|\epsilon|}\sum_{k=0}^{+\infty} |\tfrac{\phi*\psi}{\,\chi^+\!}(\xi-2k\sqrt\epsilon)|\,e^{-\Lambda|\xi-2k\sqrt\epsilon|}.
\end{equation*}
The first term is smaller than
$$\mu\,\|\phi\|_{\Omega}^\Lambda\,\|\psi\|_{\Omega}^\Lambda\leq\mu^2\,\|\phi\|_{\Omega}^\Lambda\,\|\psi\|_{\Omega}^\Lambda
\qquad\text{since }\ \mu\geq 1.$$
For the second term, using \eqref{eq:BL-303}, \eqref{eq:BL-mu} and $2\sqrt\epsilon$-periodicity of $\chi^+$, we have
\begin{align*}
&\sum_{k=0}^{+\infty} |\tfrac{\phi*\psi}{\,\chi^+\!}(\xi\!-\!2k\sqrt\epsilon)|\,e^{-\Lambda|\xi-2k\sqrt\epsilon|}\\
&\leq\kern-.5em \int\displaylimits_{c+e^{i\alpha}\R}  \kern-1em \mu\sum_{k=0}^{+\infty} |\tfrac{\phi}{\,\chi^+\!}(t\!-\!2k\sqrt\epsilon)|\,e^{-\Lambda|t-2k\sqrt\epsilon|}
		\cdot\!\Big(|\psi(\xi\!-\!t)|\,(1+|e^{\frac{(\xi-t)\pi i}{\sqrt\epsilon}}|)\,e^{-\Lambda|\xi-t|}\Big)\,d|t| \\[-4pt]
	&\quad	 + 2\sqrt{|\epsilon|}\sum_{k=0}^{+\infty}\sum_{j=1}^{k} |\tfrac{\phi}{\,\chi^+\!}(\!-2j\sqrt\epsilon)|\,e^{-\Lambda|2j\sqrt\epsilon|} 
		\cdot|\tfrac{\psi}{\,\chi^+\!}(\xi\!-\!2(k\!-\!j)\sqrt\epsilon)|\,e^{-\Lambda|\xi-2(k-j)\sqrt\epsilon|}\\
&\leq \sup_{\xi\in2\Omega_L(\!\sqrt\epsilon)}\,\sum_{k=0}^{+\infty} |\tfrac{\phi}{\,\chi^+\!}(\xi-2k\sqrt\epsilon)|\,e^{-\Lambda|\xi-2k\sqrt\epsilon|}\,\cdot\\[-8pt]
	&\qquad\qquad\qquad\quad\cdot\Big(\mu\,\|\psi\|_{\Omega}^\Lambda 
		+ \sup_{\xi\in2\Omega_L(\!\sqrt\epsilon)}2\sqrt{|\epsilon|}\sum_{j=0}^{+\infty} |\tfrac{\psi}{\,\chi^+\!}(\xi-2j\sqrt\epsilon)|\,e^{-\Lambda|\xi-2j\sqrt\epsilon|}\Big).
\end{align*}
\qed\end{proof}



\subsubsection{Proof of Theorem~\ref{theorem:BL-2}.}
\textit{i)} Let $\upsilon^\pm(\xi,\sqrt\epsilon)$ be the solution of the convolution equation \eqref{eq:BL-bpcm} on $\Omega$, provided by Proposition~\ref{proposition:BL-3}, with bounded
$\|\cdot\|_{\Omega}^\Lambda$-norm. Its Laplace transform
\begin{equation}\label{eq:BL-ypm}
\Cal L [\upsilon^\pm](x,\sqrt\epsilon)=\int_{-\infty e^{i\alpha}}^{+\infty e^{i\alpha}}\upsilon^\pm(\xi,\sqrt\epsilon)\, e^{-t(x,\epsilon)\xi}\,d\xi,
\end{equation}
where $\alpha$ can vary as in \eqref{eq:BL-alpha}, is a solution of \eqref{eq:BL-pcm} defined for 
$t(x,\epsilon)$ in the domain $T^\pm(\!\sqrt\epsilon)=\bigcup_{\alpha}\mathbf{T}_\alpha^\pm(\Lambda,\sqrt\epsilon)$, (Figure~\ref{figure:BL-6}). 
Both $\Cal L [\upsilon^+]$ and $\Cal L [\upsilon^-]$ give the same ramified solution $y(x,\sqrt\epsilon)$  on a domain $X(\!\sqrt\epsilon)$ in the $x$-plane (Figure~\ref{figure:BL-4a}).

\smallskip
\textit{ii)} If the spectrum of $M$ is of Poincaré type and $\upsilon^\pm(\xi,\sqrt\epsilon)$ is defined on $\Omega_1$ as in Proposition~\ref{lemma:BL-4}, 
with $\boxnorm{\upsilon^+}_{\Omega_1}^\Lambda<+\infty$,
then, for $x\in X_1(\!\sqrt\epsilon)\,\cap\,\{\Re(e^{i\arg\sqrt\epsilon}t(x,\epsilon))\!<\!-\Lambda\}$,
one may deform the integration path of the Laplace transform \eqref{eq:BL-ypm} to $\Gamma$, indicated in Figure~\ref{figure:BL-7},
and use the Cauchy formula to express $y^\pm(x,\sqrt\epsilon)$, for $\sqrt\epsilon\neq 0$, as a sum of residues at the points $\xi=-2k\sqrt\epsilon$, $k\in\N_{>0}$,
\begin{align}
\Cal L [\upsilon^\pm](x,\sqrt\epsilon)&=\int_\Gamma \upsilon^\pm(\xi,\sqrt\epsilon)\, e^{-t(x,\epsilon)\xi}\,d\xi=
2\pi i\sum_{k=1}^\infty \res_{-2k\sqrt\epsilon}\, \upsilon^\pm\cdot \left(\tfrac{x+\sqrt\epsilon}{x-\sqrt\epsilon}\right)^k \nonumber\\
&=-2\sqrt\epsilon\sum_{k=1}^\infty \left(\tfrac{\upsilon^\pm}{\chi^\pm}\right)\!(-2k\sqrt\epsilon,\sqrt\epsilon)\cdot \left(\tfrac{x+\sqrt\epsilon}{x-\sqrt\epsilon}\right)^k.
\label{eq:BL-101}
\end{align}
This series is convergent for $\big|\frac{x\!+\!\sqrt\epsilon}{x\!-\!\sqrt\epsilon}\big|<e^{-2\sqrt{|\epsilon|}\Lambda}$, and 
its coefficients are the same in both cases $\upsilon^+$ and $\upsilon^-$.
It defines a solution $y_1(x,\sqrt\epsilon)$ of \eqref{eq:BL-cm} on a domain $X_1(\!\sqrt\epsilon)$, analytic at $x=-\sqrt\epsilon$ and ramified at $x=\sqrt\epsilon$ (Figure~\ref{figure:BL-4b}).
\qed


\subsubsection{Proof of Proposition~\ref{proposition:BL-1}.}

Note first that for any integrable function $\phi:e^{i\alpha}\R\to\C$ with bounded $|\cdot|_{e^{i\alpha}\R}^\Lambda$-norm,
the difference between the two-sided Laplace transform $\Cal L_\alpha[\phi](t)$ and its truncation of the corresponding integral to $[-Re^{i\alpha},Re^{i\alpha}]$
can be estimated, for $t$ in the strip of convergence
$$\mathbf{T}_\alpha^+(\Lambda,\sqrt\epsilon)=\{\Lambda<\Re(e^{i\alpha} t) < -\Re(\tfrac{e^{i\alpha}\pi i}{\sqrt\epsilon})-\Lambda \}, $$
 by
\begin{align*}
\bigg| \Cal L_\alpha[\phi](t)-\int_{-Re^{i\alpha}}^{Re^{i\alpha}} \!\!\phi(\xi)\,e^{-t\xi}\,d\xi \bigg| 
&\leq |\phi|_{e^{i\alpha}\R}^\Lambda \bigg(\int_{-\infty}^{-R} \!\!\!e^{d_\alpha''(t,\epsilon)s}ds+
\int_{R}^{+\infty} \!\!\!\!e^{-d_\alpha'(t,\epsilon)s}ds \bigg) \\
&\leq \frac{2|\phi|_{e^{i\alpha}\R}^\Lambda}{d_\alpha(t,\epsilon)}e^{-d_\alpha(t,\epsilon)R},
\end{align*} 
where
\begin{equation}
d_\alpha'(t,\epsilon)=\Re(e^{i\alpha}t)-\Lambda>0,\quad 
d_\alpha''(t,\epsilon)=-\Re(\tfrac{e^{i\alpha}\pi i}{\sqrt\epsilon})-\Re(e^{i\alpha}t)-\Lambda>0,
\end{equation}
and
\begin{equation*}
d_\alpha(t,\epsilon)=\min\{d_\alpha'(t,\epsilon),\ d_\alpha''(t,\epsilon)\}
\end{equation*}
is the distance of $t$ from the border of the strip $\mathbf{T}_\alpha^+(\Lambda,\sqrt\epsilon)$.

Therefore to estimate the difference between 
$\bar y^+(t,\sqrt\epsilon)=\Cal L_{\bar\alpha}[\bar\upsilon^+](t)$ and
$\tilde y^-(t,\sqrt\epsilon)=\Cal L_{\tilde\alpha}[\tilde\upsilon^-](t)$
on $\mathbf{T}_{\bar\alpha}^+(\Lambda,\sqrt{\bar\epsilon})\cap\mathbf{T}_{\tilde\alpha}^-(\Lambda,\sqrt{\tilde\epsilon})=\mathbf{T}_{\bar\alpha}^+(\Lambda,\sqrt{\bar\epsilon})\cap\mathbf{T}_{\tilde\alpha}^+(\Lambda,\sqrt{\bar\epsilon})$
we need only to estimate the difference between the truncated integrals.

For $\epsilon=0$, the two solutions $\bar\upsilon^+(\cdot,0)$ and $\tilde\upsilon^-(\cdot,0)$ agree on the disc 
of radius $R$ not containing any eigenvalue of $M$, and the two Laplace integrals can be compared directly.
For $\epsilon\neq 0$ this is no longer true. 
Instead, we will construct a set $\bar\upsilon^+=\phi_0,\phi_1,\ldots,\phi_{N-1},\phi_N=\tilde\upsilon^-$
of approximative solutions to \eqref{eq:BL-bpcm}
defined on some fixed neighborhoods $U_j$ of $[-Re^{i\alpha_j},Re^{i\alpha_j}]$,
$\bar\alpha=\alpha_0<\alpha_1<\ldots<\alpha_{N-1}<\alpha_N=\tilde\alpha$,
covering the double-sector
$\bigcup_{\alpha\in[\bar\alpha,\tilde\alpha]}[-Re^{i\alpha},Re^{i\alpha}]$
that satisfy
\begin{itemize}
\item[i)] $|\phi_j(\xi)|\leq K_1\frac{e^{\Lambda|\xi|}}{1+|e^{\frac{\xi\pi i}{\sqrt\epsilon} }|}, \quad\xi\in U_j,$
\item[ii)] $|\phi_{j-1}(\xi)-\phi_j(\xi)|\leq K_2\frac{e^{\Lambda|\xi|+(2\Lambda-\lambda_\alpha)(R-|\xi|)}}{1+|e^{\frac{\xi\pi i}{\sqrt\epsilon} }|}$
for $\xi\in[-Re^{i\alpha},Re^{i\alpha}]\subset U_{j-1}\cap U_j,$
\end{itemize}
where $\lambda_\alpha:=-\Re(\frac{e^{i\alpha}\pi i}{\sqrt\epsilon}).$

\medskip
We can then estimate
\begin{align*}
&\int_{-Re^{i\alpha}}^{Re^{i\alpha}} \frac{e^{\Lambda|\xi|+(2\Lambda-\lambda_\alpha)(R-|\xi|)}}{1+|e^{\frac{\xi\pi i}{\sqrt\epsilon} }|} \,|e^{-t\xi}|\,d|\xi| \\
&\leq \int_0^R e^{\Lambda s+(2\Lambda-\lambda_\alpha)(R-s)-\Re(e^{i\alpha}t)s}ds+
\int_0^{-R} e^{-\Lambda s+\lambda_\alpha s+(2\Lambda-\lambda_\alpha)(R+s)-\Re(e^{i\alpha}t)s}ds \\
&= \tfrac{1}{d_\alpha''}(e^{-d'_\alpha R}-e^{-(d'_\alpha+d_\alpha'')R})+\tfrac{1}{d'_\alpha}(e^{-d_\alpha'' R}-e^{-(d'_\alpha+d_\alpha'')R})\ 
\leq\ \tfrac{2}{d_\alpha}e^{-d_\alpha R},
\end{align*}
and
\begin{align*}
\int_{Re^{i\bar\alpha}}^{Re^{i\tilde\alpha}} \frac{e^{\Lambda|\xi|}}{1+|e^{\frac{\xi\pi i}{\sqrt\epsilon} }|}|e^{-t\xi}|\,d|\xi| 
+ \int_{-Re^{i\bar\alpha}}^{-Re^{i\tilde\alpha}} \frac{e^{\Lambda|\xi|}}{1+|e^{\frac{\xi\pi i}{\sqrt\epsilon} }|}|e^{-t\xi}|\,d|\xi| 
\leq 2R(\tilde\alpha-\bar\alpha)\, e^{-dR},
\end{align*}
where
$d=\min_{\alpha\in[\bar\alpha,\tilde\alpha]} d_\alpha$.
Combining these estimates results in the estimate \eqref{eq:BL-difference} for
$\|\tilde y^-(t,\sqrt\epsilon)-\bar y^+(t,\sqrt\epsilon)\|$.
The estimate $\|\tilde y^+(t,\sqrt\epsilon)-\bar y^-(t,\sqrt\epsilon)\|$ is symmetric.

The approximate solutions $\phi_j$ are constructed as in the proof of Proposition~\ref{proposition:BL-4} as a fixed point of the operator $\Cal G_{\alpha_j}^+$ \eqref{eq:BL-operatorG} but this time with the convolution $*$ in the direction $\alpha=\alpha_j$ replaced by its symmetric truncation 
\begin{equation*}
[\phi*\psi]_{\alpha}^R(\xi):=\left\{ \begin{array}{ll}     
     \displaystyle\int_{\xi-Re^{i\alpha}}^{Re^{i\alpha}}\phi(\xi-s)\psi(s)\,ds, & \text{ if }\ \xi\in[0,Re^{i\alpha}], \\[12pt]
     \displaystyle\int_{-Re^{i\alpha}}^{Re^{i\alpha}+\xi}\!\!\phi(\xi-s)\psi(s)\,ds, & \text{ if }\ \xi\in[-Re^{i\alpha},0],
        \end{array}\right.
\end{equation*}
For any integrable bounded function $\phi:[-Re^{i\alpha},Re^{i\alpha}]\to\C$ we can still define its norms
$|\phi|_{e^{i\alpha}\R}^\Lambda$ and $\|\phi\|_{e^{i\alpha}\R}^\Lambda$ by setting $\phi(\xi)=0$ outside of the interval, and hence use the same Youngs' inequalities for the convolution as before. 
One can then again prove that for each $\alpha$ there is a fixed point solution for the truncated version of the convolution operator $\Cal G_{\alpha}^+$ on the interval $[-Re^{i\alpha},Re^{i\alpha}]$. The following lemma implies that the difference between such solution of the truncated convolution equation and a true solution on 
$e^{i\alpha}\R$, when the latter one exist, is uniformly bounded  by  $K\frac{e^{\Lambda|\xi|+(2\Lambda-\lambda_\alpha)(R-|\xi|)}}{1+|e^{\frac{\xi\pi i}{\sqrt\epsilon} }|}$ on the interval.
One then extends analytically such the truncated solutions on small double-sectors around their interval of definition,
so that the difference of each two of them $\phi_{j-1},\phi_j$ with sufficiently close angles $\alpha_{j-1},\alpha_j$ has the same kind of uniform bound.

\begin{lemma}  Suppose that $\epsilon$ is small enough so that $\lambda_\alpha>2\Lambda$.

i) 
For $\phi,\psi:e^{i\alpha}\R\to\C$  integrable and with bounded $|\cdot|_{e^{i\alpha}\R}^\Lambda$-norm
$$|[\phi*\psi]_\alpha(\xi)-[\phi*\psi]_{\alpha}^R(\xi)|\leq
\tfrac{4}{\lambda_\alpha-2\Lambda}|\phi|_{e^{i\alpha}\R}^\Lambda  |\psi|_{e^{i\alpha}\R}^\Lambda\cdot
\frac{e^{\Lambda|\xi|+(2\Lambda-\lambda_\alpha)(R-|\xi|)}}{1+|e^{\frac{\xi\pi i}{\sqrt\epsilon} }|}$$

ii) If $\phi,\psi:[-Re^{i\alpha},Re^{i\alpha}]\to\C$ be integrable and bounded, with 
$$|\phi(\xi)|\leq K_1\frac{e^{\Lambda|\xi|}}{1+|e^{\frac{\xi\pi i}{\sqrt\epsilon} }|}, \qquad\text{and}\qquad
|\psi(\xi)|\leq K_2\frac{e^{\Lambda|\xi|+(2\Lambda-\lambda_\alpha)(R-|\xi|)}}{1+|e^{\frac{\xi\pi i}{\sqrt\epsilon} }|},$$  
then
$$|[\phi*\psi]_{\alpha}^R(\xi)|\leq
4K_1K_2R\,\frac{e^{\Lambda|\xi|+(2\Lambda-\lambda_\alpha)(R-|\xi|)}}{1+|e^{\frac{\xi\pi i}{\sqrt\epsilon} }|}$$
\end{lemma}
\begin{proof}
\textit{i)} If $\xi\in[0,Re^{i\alpha}]$ we can estimate
\begin{align*}
\int_{Re^{i\alpha}}^{+\infty e^{i\alpha}}\frac{e^{\Lambda(|\xi-s|+|s|)}}
{(1+e^{\frac{(\xi-s)\pi i}{\sqrt\epsilon}})(1+e^{\frac{s\pi i}{\sqrt\epsilon}})} ds &\leq
\int_R^{+\infty} e^{\Lambda(2s-|\xi|) +\lambda_\alpha(|\xi|-s) } ds\\ 
&\leq \frac{2}{\lambda_\alpha-2\Lambda} \cdot
\frac{e^{\Lambda|\xi|+(2\Lambda-\lambda_\alpha)(R-|\xi|)}}{1+|e^{\frac{\xi\pi i}{\sqrt\epsilon} }|},
\end{align*}
and by symmetry $s\mapsto \xi-s$ the same holds for the integral from $-\infty e^{i\alpha}$ to$\xi-Re^{i\alpha}$. Similarly for $\xi\in[-Re^{i\alpha},0]$.

\textit{ii)}  If $\xi\in[0,Re^{i\alpha}]$ we can estimate
\begin{align*}
&\int_{\xi-Re^{i\alpha}}^{R e^{i\alpha}}\!\!\!\frac{e^{\Lambda(|\xi-s|+|s|)+(2\Lambda-\lambda_\alpha)(R-|s|)}}
{(1+e^{\frac{(\xi-s)\pi i}{\sqrt\epsilon}})(1+e^{\frac{s\pi i}{\sqrt\epsilon}})} ds \,\leq\! 
\int_{|\xi|}^R \!\!e^{\Lambda(2s-|\xi|) +\lambda_\alpha(|\xi|-s)+ (2\Lambda-\lambda_\alpha)(R-s)} ds \\
&\qquad\quad\ + \int_0^{|\xi|} e^{\Lambda|\xi| + (2\Lambda-\lambda_\alpha)(R-s)} ds \ 
+  \int_{-R+|\xi|}^0 \!\! e^{-\Lambda(2s-|\xi|) +\lambda_\alpha s+ (2\Lambda-\lambda_\alpha)(R+s)} ds \\ 
&\leq \left[(R-|\xi|)+|\xi|+(R-|\xi|)\right] e^{\Lambda|\xi|+(2\Lambda-\lambda_\alpha)(R-|\xi|)} 
\,\leq\, 4R \frac{e^{\Lambda|\xi|+(2\Lambda-\lambda_\alpha)(R-|\xi|)}}{1+|e^{\frac{\xi\pi i}{\sqrt\epsilon} }|},
\end{align*}
using that $(2\Lambda-\lambda_\alpha)<0$. Similarly for $\xi\in[-Re^{i\alpha},0]$.
\qed\end{proof}

\begin{corollary}\label{corollary:BL-3}
If $\upsilon_{i}^\pm$, $i=1,2$, are two solution of the convolution equations \eqref{eq:BL-bpcm}
of Proposition~\ref{proposition:BL-3} on domains $\Omega_i$, then their difference
is exponentially flat in $\sqrt\epsilon$:
$$|\upsilon_{1}^\pm(\xi,\sqrt\epsilon)-\upsilon_{2}^\pm(\xi,\sqrt\epsilon)|
\leq K\frac{e^{\Lambda|\xi|+(2\Lambda-\lambda_\alpha)(R-|\xi|)}}{1+|e^{\frac{\xi\pi i}{\sqrt\epsilon} }|}
\quad\text{ for }\ \xi\in\Omega_{1}(\!\sqrt\epsilon)\cap\Omega_{2}(\!\sqrt\epsilon)
$$
for some $K>0$.
\end{corollary}

%

\begin{acknowledgements}
I am very grateful to Christiane Rousseau for many helpful discussions and to Reinhard Sch\"afke and Loïc Teyssier for their interest in my work. 
The paper was prepared during my doctoral studies at Université de Montreal and finalized during my stay at Université de Strasbourg -- 
I want to thank both institutions for their hospitality.
\end{acknowledgements}

\small

\end{document}